\newtheorem{definition}{Definition}[section]
\newtheorem{theorem}[definition]{Theorem}
\newtheorem{lemma}[definition]{Lemma}
\newtheorem{remark}[definition]{Remark}
\let\AM@caption\caption
\let\caption\relax
\let\AM@subcaption\@subcaption
\date{}
\begin{document}
	\baselineskip 18pt
	\title[RBKVS]
	{Randomized block Kaczmarz with volume sampling:  Momentum acceleration and efficient implementation}
	\author{Ruike Xiang}
	\address{School of Mathematical Sciences, Beihang University, Beijing, 100191, China. }
	\email{xiangrk@buaa.edu.cn}

	\author{Jiaxin Xie}
	\address{LMIB of the Ministry of Education, School of Mathematical Sciences, Beihang University, Beijing, 100191, China. }
	\email{xiejx@buaa.edu.cn}
	
		\author{Qiye Zhang}
	\address{LMIB of the Ministry of Education, School of Mathematical Sciences, Beihang University, Beijing, 100191, China. }
	\email{zhangqiye@buaa.edu.cn}

	\begin{abstract}
	The randomized block Kaczmarz (RBK) method is a widely utilized iterative scheme for solving large-scale linear systems. However, the theoretical analysis and practical effectiveness of this method heavily rely on  a good  row paving of the coefficient matrix. This motivates us to introduce a novel block selection strategy to the RBK method, called volume sampling,  in which the probability of selection is proportional to the volume spanned by the rows of the selected submatrix. To further enhance the practical performance, we develop and analyze a momentum variant of the method. Convergence results are established and demonstrate the notable improvements in convergence factor of the RBK method brought by the volume sampling and the momentum acceleration. Furthermore, to  efficiently implement the RBK method with volume sampling, we propose  an efficient algorithm that enables volume sampling from a sparse matrix with sampling complexity that is only logarithmic in dimension. Numerical experiments confirm our theoretical results.
\end{abstract}

\maketitle

\let\thefootnote\relax\footnotetext{Key words: linear system, randomized block Kaczmarz, volume sampling, momentum acceleration, efficient implementation, average consensus}

\let\thefootnote\relax\footnotetext{Mathematics subject classification (2020): 65F10, 65F20, 90C25, 15A06, 68W20}

\section{Introduction}

The Kaczmarz method \cite{karczmarz1937angenaherte} and its randomized variant \cite{strohmer2009randomized} are very popular iteration solvers for solving the large-scale linear systems
\begin{equation}\label{eq1}
	Ax=b, A\in\mathbb{R}^{m\times n}, b\in\mathbb{R}^m.
\end{equation}
For any $i\in [m] := \{1,\ldots, m\}$, let $A_{i }$ denote the $i$-th row of $A$ and $b_i$ denote the $i$-th entry of $b$. Starting from an arbitrary initial vector $x^0$, the method generates the subsequent iterate $x^{k+1}$ by
$$
x^{k+1}=x^k-\frac{A_{i_k }x^k-b_{i_k}}{\|A_{i_k }\|^2_2} A_{i_k }^\top,
$$
where $\top$  denotes the transpose, and $i_k$ is the row index  chosen according to various selection strategies, such as cyclic, greedy, or random.
Due to their simplicity and efficiency,  Kaczmarz-type methods have been widely used in many fields, such as signal processing  \cite{byrne2003unified}, image
reconstruction \cite{herman2008image,popa2004kaczmarz},  and computer tomography \cite{herman2009fundamentals,natterer2001mathematics}.

In some situations, one might prefer a block variant of the Kaczmarz method to efficiently tackle the linear system \eqref{eq1}. Inspired by the block-iterative scheme described in \cite{elfving1980block}, a block variant of the randomized Kaczmarz (RK) methods was first developed and analyzed by Needell and Tropp \cite{needell2014paved}.
The randomized block Kaczmarz (RBK) method initiates by partitioning the rows of the coefficient matrix into $t$ subsets, forming a partition $\Gamma=\{\mathcal{S}_1,\ldots,\mathcal{S}_t\}$. 
The iteration scheme of the RBK method is described as
\begin{equation}
	\label{rbk-gave}
	x^{k+1}=x^k-A^\dagger_{\mathcal{S}_k }(A_{\mathcal{S}_k }x^k-b_{\mathcal{S}_k}),
\end{equation}
where  $A_{\mathcal{S}_k }$ and $b_{\mathcal{S}_k}$ denote  the submatrix of $A$ and the subvector of $b$ indexed by $\mathcal{S}_k$, respectively, and $A^\dagger_{\mathcal{S}_k }$ is the (Moore-Penrose) pseudoinverse of $A_{\mathcal{S}_k }$. Particularly, if $t= m$, that is, if the number of blocks is equal to the number of rows, each block consists of a single row and we recover the RK method.

To analyze the convergence of the RBK method, it is essential to define specific quantities as outlined in \cite{needell2014paved}.
Let $\lambda_{\min}(\cdot)$ and $\lambda_{\max}(\cdot)$ denote the minimum and maximum eigenvalues of a certain matrix, respectively.
A row paving
$(t,\beta_1, \beta_2)$ of a matrix $A$ is a partition $\Gamma=\{\mathcal{S}_1,\ldots,\mathcal{S}_t\}$ of the set $[m]$ that  satisfies the following conditions for each subset $\mathcal{S}_i$ in the partition
$$
\beta_1\leq \lambda_{\min}(A_{\mathcal{S}_i }A^\top_{\mathcal{S}_i }) \ \ \text{and} \ \ \lambda_{\max}(A_{\mathcal{S}_i }A^\top_{\mathcal{S}_i })\leq \beta_2, \ \  \forall \ \mathcal{S}_i\in\Gamma, 
$$
where $t$ indicates the size of the paving,  and  $\beta_1$ and $\beta_2$ denote the lower and upper paving bounds, respectively. 
Assume that $A$ is a matrix with full column rank and row paving $(t,\beta_1, \beta_2)$, and the index $i$ is selected with probability $ 1/t$, it has been demonstrated in \cite[Theorem 1.2]{needell2014paved} that the RBK method \eqref{rbk-gave} satisfies
\begin{equation}
	\label{rbk_convergence}
	\mathbb{E}[\|x^k-x^*\|^2_2]\leq\left(1-\frac{\sigma^2_{\min}(A)}{t\beta_2}\right)^k\|x^0-x^*\|^2_2+\frac{\beta_2}{\beta_1}\frac{\|Ax^*-b\|_2^2}{\sigma^2_{\min}(A)},
\end{equation}
where $x^*$ is  the least-squares solution of the linear system, and $\sigma_{\min}(A)$ denotes the smallest nonzero singular value of $A$. 
This result reveals how the implementation and behavior  of the RBK method are significantly influenced by the properties of the submatrices $A_{\mathcal{S} }$ indexed by the blocks $\mathcal{S}$ in the partition $\Gamma$. Thus, it prompts us to further explore the possibility of designing  alternative block selection strategies for the RBK method, instead of relying on any special row paving.

Recently, Haddock et al. \cite{9869773} introduced the concept of a row covering, which serves as a natural extension of the row paving. Unlike row paving, which requires that the subsets strictly partition the row indices of a matrix, row covering allows these subsets to overlap. Moreover, in the context of row covering, the parameter $\beta_1$ is redefined to provide a lower bound not for the minimum eigenvalue, but specifically for the minimum non-zero eigenvalue of the submatrices formed by these subsets. However, similar to row paving, the convergence of the algorithm still hinges on the parameters \(\beta_1\) and \(\beta_2\) of a good row covering. In this paper, we intend to introduce the volume sampling \cite{deshpande2006matrix} technique to improve the properties of the RBK method.

\subsection{Our contributions}
Our contributions can be summarized as follows.

\begin{itemize}
	\item[1.] 
	We investigate the RBK method with volume sampling (RBKVS) for solving the  linear system \eqref{eq1}. In this approach, a submatrix comprising $s$-subsets of rows from the coefficient matrix $A$ is selected, with the selection probability proportional to the volumes (i.e., determinants) of the principal submatrices of $AA^{\top}$. We analyze the convergence properties of the RBKVS method, demonstrating that the block size $s$ can affect the convergence factor. In particular, if the coefficient matrix $A$ has several relatively large singular values, the RBKVS method may exhibit significant advantages.
	\item[2.] Inspired by the success of the heavy ball momentum technique \cite{polyak1964some,loizou2020momentum,zeng2024adaptive} in improving the performance of optimization methods, we extend this acceleration technique to enhance the performance of the RBKVS method. We demonstrate that the momentum variants of the RBKVS method achieve an accelerated convergence rate.
	\item[3.] We propose an algorithm designed for efficient volume sampling from sparse matrices. The preprocessing complexity of this algorithm is proportional to the number of nonzero elements in the matrix, ensuring efficiency with sparse data.  Specifically, if the coefficient matrix \( A \) of the linear system is fixed and only the right-hand side vector \( b \) varies, our preprocessing step needs to be performed only once. This allows the precomputed results to be reused in subsequent computations, significantly reducing the overall cost.
	\item[4.] Numerical experiments, including comparison with the state-of-the-art methods,  demonstrate that incorporating volume sampling and momentum into the RBK method significantly enhances its accuracy and efficiency. 
\end{itemize}

\subsection{Related work}

There is a vast literature on the Kaczmarz-originated method. Most research in this field typically focuses on developing rules for selecting row indices \cite{censor1981row,karczmarz1937angenaherte,Gow15,bai2018greedy,Nec19,han2022pseudoinverse}, or on obtaining accelerated \cite{liu2016accelerated,loizou2020momentum,su2024greedy,zeng2024adaptive,Han2022-xh}, Bregman \cite{schopfer2019linear,yuan2022adaptively,zeng2023fast,tondji2024acceleration}, nonlinear \cite{gower2024bregman,wang2022nonlinear}, and parallel \cite{liu2014asynchronous,ferreira2024parallelization} variants of existing methods. For a comprehensive overview of the topic, the reader is referred to the recent review papers \cite{bai2023randomized,a2024survey} and the references therein. Below, we discuss several works that are particularly relevant to our research.

As highlighted in the previous section, one of the seminal contributions to randomized block-type Kaczmarz methods is the RBK method, proposed by Needell and Tropp \cite{needell2014paved}.
Another advancement in this area is
the sketch-and-project method developed by Gower and Richt{\'a}rik \cite{Gow15}, which can include the RBK method as a special case. However, it is limited to consistent linear systems, and its convergence upper bound is difficult to estimate.
The randomized averaged block Kaczmarz (RABK) method, introduced by Necoara \cite{Nec19}, represents another significant advancement. Compared to RBK, RABK offers substantial advantages, notably its capability for parallelization and its independence from the requirement for row paving. Recently, the momentum variants of the RABK method have been investigated in \cite{han2022pseudoinverse,zeng2024adaptive}. Inspired by the effectiveness of the heavy ball momentum technique in the stochastic methods, we intend to employ such momentum acceleration in the RBK method.

The most relevant work to ours is \cite{rodomanov2020randomized}, where the authors investigated the convergence rate of block coordinate descent (BCD) algorithms with volume sampling
by exploiting specific formulas involving sums of determinants and adjugate matrices.  Given that the RBK method can be interpreted as a randomized BCD approach (refer to, for instance, \cite{hefny2017rows,zeng2023fast} for further insights), their convergence findings can be extrapolated to the RBK method. However, there are two primary limitations in applying their convergence results to the RBK method. First, their linear convergence conclusions require the assumption that the matrix \(AA^\top\) is positive definite.
Second, they assume that the linear systems are consistent. In contrast, our convergence results are applicable to various types of linear systems, including consistent or inconsistent, full rank or rank-deficient; see Theorem \ref{thm1}. Furthermore, we also explore a variant of the RBK method with momentum, inspired by the success of Polyak's heavy ball momentum method. To the best of our knowledge, this is the first study to investigate the momentum  variants of the RBK method.

Another line of related research is the momentum variants of the stochastic gradient descent (SGD) method \cite{garrigos2023handbook,robbins1951stochastic,loizou2020momentum,han2022pseudoinverse,Han2022-xh}. The RK method, as a specific case of SGD, has been studied within this momentum framework. For example, Loizou and Richt{\'a}rik \cite{loizou2020momentum} explored several types of stochastic optimization algorithms enhanced by heavy ball momentum for solving convex quadratic problems. 
We note that when the block size $s=1$, our convergence results are consistent with those obtained in \cite{loizou2020momentum,Han2022-xh}. Moreover, we have determined that the efficiency of the method consistently improves as the block size increases.

Finally, we note that volume sampling, which can be viewed as a type of determinantal point process (DPP) \cite{macchi1975coincidence}, has recently received significant attention in the literature. It has a wide range of applications in various contexts, including subset selection for matrices \cite{avron2013faster}, randomized numerical linear algebra \cite{derezinski2021determinantal}, and linear regression \cite{derezinski2018reverse}. There has been significant recent work on efficiently performing volume sampling, such as using mixtures of projection DPPs \cite{kulesza2012determinantal}, strongly Rayleigh measures \cite{anari2016monte}, the Markov chain Monte Carlo (MCMC) algorithm \cite{anari2024optimal}, and the randomized Hadamard transform \cite{derezinski2024solving}.

\subsection{Organization}
The remainder of this paper is organized as follows. After introducing some notations and lemmas in Section 2, we analyze the RBKVS method in Section 3. In Section 4, we introduce the heavy ball momentum technique into the RBKVS method and show its improved linear convergence upper bound. In Section 5, we discuss how to efficiently generate a random variable according to volume sampling. In Section 6, we perform some numerical experiments to show the effectiveness of the proposed methods. Finally we conclude the paper in Section 7.

\section{Basic definitions and Preliminaries}

\subsection{Basic definitions}

For any random variables $\xi_1$ and $\xi_2$, we use $\mathbb{E}[\xi_1]$ and $\mathbb{E}[\xi_1\lvert \xi_2]$ to denote the expectation of $\xi_1$ and the conditional expectation of $\xi_1$ given $\xi_2$. For any integer $s$ satisfying $1 \leq  s  \leq  m$, we represent the collection of all $s$-element subsets of $[m]:=\{1,\ldots,m\}$ by $\binom{[m]}{s}$. For a vector $x\in\mathbb{R}^m$, the notations $x_{\mathcal{S}},x^\top$, and $\|x\|_2$ are used to denote the subvector indexed by $\mathcal{S}$, the transpose, and the Euclidean norm of $x$, respectively. We use $\text{diag}(x)$ to denote the diagonal matrix whose entries on the diagonal are the components of $x$. For any $i\in[m]$, we use  $x_{-i}\in \mathbb{R}^{m-1}$ to denote the vector obtained by omitting the $i$-th element from $x$, i.e., $	x_{-i}=(x_1,\ldots,x_{i-1},x_{i+1},\ldots,x_m)^\top$.
For a vector $\xi=(\xi_{1},...,\xi_{m})^\top$, we define its $\ell$-th elementary symmetric polynomial as 
\begin{equation}
	\label{def-esp}
	e_\ell(\xi):=\sum_{1\leq i_1<\cdots < i_\ell\leq m}\xi_{i_1}\cdots  \xi_{i_\ell}, \ 1\leq \ell\leq m,
\end{equation}
and we set $e_0(\xi):=1$ for convenience.

Let  $A$  be an $m\times n$ matrix, we use $A_{\mathcal{S}}$, $A^\top$, $A^\dagger$, $\|A\|_F$, and $\text{Range}(A)$ to denote the row  submatrix indexed by $\mathcal{S} $, the transpose, the Moore-Penrose pseudoinverse, the Frobenius norm, and the column space of $A$, respectively.  
We use $A=U\Sigma V^\top$ to denote the singular value decomposition (SVD) of $A$, where $U\in\mathbb{R}^{m\times m}$, $\Sigma\in\mathbb{R}^{m\times n}$, and $V\in\mathbb{R}^{n\times n}$.
The nonzero singular values of a matrix $A$ are $\sigma_1(A)\geq\sigma_2(A)\geq\ldots\geq\sigma_{r}(A):=\sigma_{\min}(A)>0$, where $r$ is the rank of $A$ and $\sigma_{\min}(A)$ denotes the smallest nonzero singular value of $A$.  We see that $\|A\|_2=\sigma_{1}(A)$ and $\|A\|_F=\sqrt{\sum\limits_{i=1}^r \sigma_i^2(A)}$.  The identity matrix is denoted by $I$. For a square matrix $A$, by $\operatorname{det}(A)$ and $\operatorname{Adj}(A)$ we denote its determinant and adjugate matrix (the transpose of the cofactor matrix), respectively.

A symmetric matrix $M\in\mathbb{R}^{n\times n}$ is said to be positive semidefinite if $x^\top M x\geq 0$ holds for any
$x\in\mathbb{R}^n$.  For any
two matrices $M$ and $N$, we write $M\preceq N$  to represent $N-M$ is positive semidefinite.

\begin{definition}[Volume sampling] 
	\label{vs}
	Let \( A \) be an \( m \times n \) matrix and suppose the integer \( s \) satisfies \( 1 \leq s \leq \operatorname{rank}(A) \). Consider a random variable \( \mathcal{S}_0 \) that takes values in \( \binom{[m]}{s} \). We define \( \mathcal{S}_0 \) as being generated according to \( s \)-element volume sampling with respect to \( AA^\top \), denoted by \( \mathcal{S}_0 \sim \operatorname{Vol}_s(AA^\top) \), if for all \( \mathcal{S} \in \binom{[m]}{s} \), the probability that \( \mathcal{S}_0 \) equals \( \mathcal{S} \) is given by
	\[
	\mathbb{P}(\mathcal{S}_0 = \mathcal{S}) = \frac{\operatorname{det}(A_{\mathcal{S}} A_{\mathcal{S}}^\top)}{\sum_{\mathcal{J} \in \binom{[m]}{s}} \operatorname{det}(A_{\mathcal{J}} A_{\mathcal{J}}^\top)}.
	\]
\end{definition}

In this paper, we denote \(x^*\) as a specific solution of the linear system \eqref{eq1}. For any \(x^0 \in \mathbb{R}^n\), we define
$ x^0_* := A^\dagger b + (I - A^\dagger A)x^0$.
We note that \(x^0_*\) is the orthogonal projection of \(x^0\) onto the set
$ \{x \in \mathbb{R}^n \mid A^\top A x = A^\top b\}$.

\subsection{Some useful lemmas}

The lemmas presented below are integral to our convergence analysis, and their detailed proofs can be found in the appendix.

\begin{lemma}\label{keylemma-exp1}
	Suppose \( A \) is an \( m \times n \) matrix with the singular value decomposition given by \( A = U \Sigma V^\top \). Define \( \lambda := (\sigma^2_1(A), \ldots,\sigma^2_{\operatorname{rank}(A)}(A),0,\ldots,0 )^\top \in\mathbb{R}^m\), which represents the eigenvalues of $AA^\top$.
	Assume \(1 \leq s \leq \operatorname{rank}(A)\), and let the random variable \(\mathcal{S}\) be distributed according to \(\mathcal{S} \sim \operatorname{Vol}_s(AA^\top)\). Then,
	\[
	\mathbb{E}[A_{\mathcal{S} }^\dagger A_{\mathcal{S} }] = A^\top H_s A,
	\]
	where 
	\begin{equation}
		\label{def_HB}
		H_s := \frac{U \operatorname{diag}\left(e_{s-1}(\lambda_{-1}), \ldots, e_{s-1}(\lambda_{-m})\right) U^\top}{e_{s}(\lambda)}
	\end{equation}
	and \( \lambda_{-i} \in \mathbb{R}^{m-1} \) denotes the vector \( \lambda \) with the \( i \)-th element omitted.
\end{lemma}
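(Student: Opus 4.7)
The plan is to test the claimed matrix identity against arbitrary vectors \(y \in \mathbb{R}^n\), reducing it to a scalar statement about elementary symmetric polynomials. Since volume sampling assigns positive probability only to subsets \(\mathcal{S}\) for which \(A_{\mathcal{S}}\) has full row rank \(s\), on the support of the distribution I use \(A_{\mathcal{S}}^\dagger A_{\mathcal{S}} = A_{\mathcal{S}}^\top (A_{\mathcal{S}} A_{\mathcal{S}}^\top)^{-1} A_{\mathcal{S}}\) and multiply through by \(\det(A_{\mathcal{S}} A_{\mathcal{S}}^\top)\) to replace the inverse by the adjugate via \(\det(M)\, M^{-1} = \operatorname{Adj}(M)\). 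This yields
\begin{equation*}
\mathbb{E}[A_{\mathcal{S}}^\dagger A_{\mathcal{S}}] \cdot \!\!\sum_{\mathcal{J}\in\binom{[m]}{s}}\!\! \det(A_{\mathcal{J}} A_{\mathcal{J}}^\top) \;=\; \sum_{\mathcal{S}\in \binom{[m]}{s}} A_{\mathcal{S}}^\top \operatorname{Adj}(A_{\mathcal{S}} A_{\mathcal{S}}^\top) A_{\mathcal{S}},
\end{equation*}
and the normalizing constant is identified by the standard fact that the sum of all \(s\times s\) principal minors of a matrix equals the \(s\)-th elementary symmetric polynomial of its eigenvalues; applied to \(AA^\top\), this gives \(\sum_{\mathcal{J}} \det(A_{\mathcal{J}} A_{\mathcal{J}}^\top) = e_s(\lambda)\).

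Next, for an arbitrary \(y \in \mathbb{R}^n\), I invoke the matrix determinant lemma \(v^\top \operatorname{Adj}(M) v = \det(M+vv^\top) - \det(M)\) with \(v = A_{\mathcal{S}} y\) and \(M = A_{\mathcal{S}} A_{\mathcal{S}}^\top\), obtaining
\begin{equation*}
y^\top A_{\mathcal{S}}^\top \operatorname{Adj}(A_{\mathcal{S}} A_{\mathcal{S}}^\top) A_{\mathcal{S}}\, y \;=\; \det\!\bigl(A_{\mathcal{S}}(I + yy^\top)A_{\mathcal{S}}^\top\bigr) - \det(A_{\mathcal{S}} A_{\mathcal{S}}^\top).
\end{equation*}
Summing over \(\mathcal{S}\) and again using the principal-minors identity (now applied to \(A(I+yy^\top)A^\top = AA^\top + uu^\top\) with \(u := Ay\)), the right-hand side becomes the difference \(e_s(\Lambda + zz^\top) - e_s(\Lambda)\), after using the SVD \(AA^\top = U\Lambda U^\top\) and setting \(z := U^\top u\), since orthogonal similarity preserves the characteristic polynomial and hence all \(e_s\) of the eigenvalues.

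The core combinatorial step, and the main obstacle of the proof, is the rank-one-update identity
\begin{equation*}
e_s(\Lambda + zz^\top) - e_s(\Lambda) \;=\; \sum_{i=1}^m z_i^2\, e_{s-1}(\lambda_{-i}),
\end{equation*}
which I would prove by expanding the left-hand side as a sum over \(s \times s\) principal minors: the \((T,T)\)-block of \(\Lambda + zz^\top\) is the diagonal-plus-rank-one matrix \(\Lambda_{T,T} + z_T z_T^\top\), whose determinant unfolds via the matrix determinant lemma into \(\prod_{i\in T}\lambda_i + \sum_{i\in T} z_i^2 \prod_{j\in T,\, j\neq i}\lambda_j\); exchanging the order of summation \(\sum_{|T|=s,\, i\in T} = \sum_i \sum_{|T'|=s-1,\, i \notin T'}\) then produces exactly \(\sum_i z_i^2\, e_{s-1}(\lambda_{-i})\). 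Substituting back and recognizing \(\sum_i z_i^2\, e_{s-1}(\lambda_{-i}) = z^\top D z = u^\top U D U^\top u = y^\top A^\top (U D U^\top) A\, y\) with \(D = \operatorname{diag}(e_{s-1}(\lambda_{-1}),\ldots,e_{s-1}(\lambda_{-m}))\) and dividing by \(e_s(\lambda)\) yields \(y^\top \mathbb{E}[A_{\mathcal{S}}^\dagger A_{\mathcal{S}}]\, y = y^\top A^\top H_s A\, y\); since \(y\) is arbitrary and both sides are symmetric, the lemma follows. A small technical point to verify along the way is that subsets \(\mathcal{S}\) with \(\det(A_{\mathcal{S}} A_{\mathcal{S}}^\top) = 0\) contribute zero to the adjugate sum, which holds because \(A_{\mathcal{S}} y\) lies in the column span of \(A_{\mathcal{S}}\) and hence both determinants on the right above vanish when \(A_{\mathcal{S}}\) is rank-deficient, making the extension of the sum from the support to all \(s\)-subsets harmless.
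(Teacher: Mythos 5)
Your proof is correct. The opening moves coincide with the paper's: expand the expectation, identify the normalizing constant as $e_s(\lambda)$ via the principal-minors/Cauchy--Binet identity, pass from $A_{\mathcal{S}}^\dagger$ to $\operatorname{Adj}(A_{\mathcal{S}}A_{\mathcal{S}}^\top)$ on full-rank subsets, and check that rank-deficient subsets contribute nothing so the sum may be extended to all of $\binom{[m]}{s}$ (your argument for this last point — that $A_{\mathcal{S}}A_{\mathcal{S}}^\top+(A_{\mathcal{S}}y)(A_{\mathcal{S}}y)^\top$ has range inside the column space of $A_{\mathcal{S}}$, hence is singular — is a clean quadratic-form substitute for the paper's Lemma 7.3). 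Where you genuinely diverge is the heart of the matter: the paper disposes of the adjugate sum by citing the identity $\sum_{\mathcal{S}} I_{\mathcal{S}}^\top\operatorname{Adj}(A_{\mathcal{S}}A_{\mathcal{S}}^\top)I_{\mathcal{S}} = U\operatorname{diag}(e_{s-1}(\lambda_{-1}),\ldots,e_{s-1}(\lambda_{-m}))U^\top$ as Lemma 3.3 of Rodomanov--Kropotov, whereas you derive what is needed from scratch by testing against an arbitrary $y$, converting $v^\top\operatorname{Adj}(M)v$ into the determinant difference $\det(M+vv^\top)-\det(M)$, recognizing the summed differences as $e_s$ of the eigenvalues of the rank-one update $\Lambda+zz^\top$, and proving the identity $e_s(\Lambda+zz^\top)-e_s(\lambda)=\sum_i z_i^2\,e_{s-1}(\lambda_{-i})$ by a direct minor expansion and exchange of summation. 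Each of these steps is verifiable and the combinatorial identity is correct. What your route buys is self-containedness and elementarity — no external lemma is imported, and the mechanism behind the $e_{s-1}(\lambda_{-i})$ weights becomes transparent. What it gives up is slight generality: the quadratic-form test only establishes the sandwiched identity $A^\top(\sum_{\mathcal{S}}I_{\mathcal{S}}^\top\operatorname{Adj}(\cdot)I_{\mathcal{S}})A = A^\top U D U^\top A$, which suffices for this lemma but not verbatim for Lemma 2.3, where the unsandwiched matrix identity is invoked; your argument does extend to that case by testing against arbitrary $w\in\mathbb{R}^m$ in place of $Ay$, but that extension would need to be said explicitly.
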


\begin{lemma}\label{keylemma-exp2}
	Under the notations of  Lemma \ref{keylemma-exp1}. It holds that
	\[
	\mathbb{E}[I^\top_{\mathcal{S} } (A_{\mathcal{S} }^{\dagger})^\top A_{\mathcal{S} }^{\dagger}  I_{\mathcal{S} }] \preceq   H_s.
	\]
\end{lemma}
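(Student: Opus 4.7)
My strategy is to express the expectation as a sum of adjugate matrices, upper-bound it by including the rank-deficient subsets that the expectation cannot see, and then invoke the matrix identity underpinning Lemma \ref{keylemma-exp1} to identify the enlarged sum as $e_s(\lambda)\,H_s$.

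Since $\mathbb{P}(\mathcal{S}) = \det(A_{\mathcal{S}}A_{\mathcal{S}}^\top)/e_s(\lambda)$, volume sampling places zero probability on any $\mathcal{S}$ with $\operatorname{rank}(A_{\mathcal{S}})<s$. On the remaining full-row-rank event one has $A_{\mathcal{S}}^\dagger = A_{\mathcal{S}}^\top(A_{\mathcal{S}}A_{\mathcal{S}}^\top)^{-1}$, hence $(A_{\mathcal{S}}^\dagger)^\top A_{\mathcal{S}}^\dagger = (A_{\mathcal{S}}A_{\mathcal{S}}^\top)^{-1}$. Combining with $\operatorname{Adj}(M) = \det(M)\,M^{-1}$ yields
\[
\mathbb{E}\!\left[I_{\mathcal{S}}^\top(A_{\mathcal{S}}^\dagger)^\top A_{\mathcal{S}}^\dagger I_{\mathcal{S}}\right]
= \frac{1}{e_s(\lambda)}\sum_{\substack{\mathcal{S}\in\binom{[m]}{s}\\ \operatorname{rank}(A_{\mathcal{S}})=s}} I_{\mathcal{S}}^\top \operatorname{Adj}(A_{\mathcal{S}}A_{\mathcal{S}}^\top) I_{\mathcal{S}}.
\]
For any positive semidefinite matrix $M$ the adjugate $\operatorname{Adj}(M)$ is itself positive semidefinite, since in the eigenbasis of $M$ it is diagonal with nonnegative entries $\prod_{j\neq i}\lambda_j(M)$. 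Hence every summand above is $\succeq 0$, so extending the sum to all of $\binom{[m]}{s}$ only enlarges it in the Loewner order:
\[
\mathbb{E}\!\left[I_{\mathcal{S}}^\top(A_{\mathcal{S}}^\dagger)^\top A_{\mathcal{S}}^\dagger I_{\mathcal{S}}\right]
\preceq \frac{1}{e_s(\lambda)}\sum_{\mathcal{S}\in\binom{[m]}{s}} I_{\mathcal{S}}^\top \operatorname{Adj}(A_{\mathcal{S}}A_{\mathcal{S}}^\top) I_{\mathcal{S}}.
\]

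It then suffices to establish the matrix identity
\[
\sum_{\mathcal{S}\in\binom{[m]}{s}} I_{\mathcal{S}}^\top \operatorname{Adj}(A_{\mathcal{S}}A_{\mathcal{S}}^\top) I_{\mathcal{S}} = e_s(\lambda)\,H_s,
\]
which is the combinatorial identity underlying Lemma \ref{keylemma-exp1}. I plan to set $B = AA^\top$ and compute $\nabla_B e_s(\lambda)$ in two ways. On the combinatorial side, the principal-minor expansion $e_s(\lambda) = \sum_{\mathcal{S}}\det(B_{\mathcal{S},\mathcal{S}})$ combined with the cofactor formula $\partial \det(M)/\partial M_{ab} = \operatorname{Adj}(M)_{ba}$, together with the symmetry of $B$, gives $\nabla_B e_s(\lambda) = \sum_{\mathcal{S}} I_{\mathcal{S}}^\top \operatorname{Adj}(B_{\mathcal{S},\mathcal{S}}) I_{\mathcal{S}}$. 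On the spectral side, starting from $\det(tI-B) = \sum_{k=0}^m(-1)^k e_k(\lambda) t^{m-k}$ and differentiating via $\nabla_B \det(tI-B) = -\operatorname{Adj}(tI-B)^\top$, then inserting the spectral form $\operatorname{Adj}(tI-B) = U\operatorname{diag}\bigl(\prod_{j\neq i}(t-\lambda_j)\bigr)U^\top$, expresses the same gradient as $U\operatorname{diag}(e_{s-1}(\lambda_{-i}))U^\top = e_s(\lambda)\,H_s$. Equating the two formulas yields the identity.

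The main obstacle is this last identity; the rest is routine bookkeeping. The delicate parts are keeping track of signs and transpositions in the matrix-calculus argument, and handling the derivative of $e_s$ by first treating $B$ as a general matrix and then specializing to the symmetric case $B = AA^\top$. Once the identity is in hand, combining it with the earlier inequality produces the claimed bound $\mathbb{E}[I_{\mathcal{S}}^\top(A_{\mathcal{S}}^\dagger)^\top A_{\mathcal{S}}^\dagger I_{\mathcal{S}}] \preceq H_s$.
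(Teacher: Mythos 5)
Your argument is correct and follows essentially the same route as the paper's: expand the expectation over subsets, use $(A_{\mathcal{S}}^{\dagger})^\top A_{\mathcal{S}}^{\dagger}=(A_{\mathcal{S}}A_{\mathcal{S}}^\top)^{-1}$ together with $\operatorname{Adj}(M)=\operatorname{det}(M)M^{-1}$ on the full-row-rank subsets, add back the rank-deficient subsets by noting that the adjugate of a positive semidefinite matrix is positive semidefinite (the paper's Lemma \ref{lemma0}), and identify $\sum_{\mathcal{S}} I_{\mathcal{S}}^\top\operatorname{Adj}(A_{\mathcal{S}}A_{\mathcal{S}}^\top)I_{\mathcal{S}}=e_s(\lambda)H_s$. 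The only divergence is that the paper imports this last identity from the literature (Lemma \ref{lemmaH}, cited from Rodomanov and Kropotov), whereas you sketch a self-contained derivation by computing $\nabla_B e_s(\lambda)$ at $B=AA^\top$ both combinatorially and spectrally; that sketch is sound provided you carry out the sign and symmetrization bookkeeping you already flag.
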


The next lemma  will be utilized to establish the convergence factor for the algorithms.
\begin{lemma}
	\label{lemma-lowerbound}
	Under the notations of Lemma \ref{keylemma-exp1}, for any \( z \in \operatorname{Range}(A^\top) \), we have
	\[
	z^\top A^\top H_s A z \geq \frac{\sigma_{\min}^2(A)}{\sigma_s^2(A) + \cdots + \sigma_{\min}^2(A)} \|z\|_2^2.
	\]
\end{lemma}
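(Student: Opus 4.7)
The plan is to diagonalize the quadratic form via the SVD $A=U\Sigma V^\top$ and then reduce the claim to a purely symmetric-function inequality. Setting $r:=\operatorname{rank}(A)$, any $z\in \operatorname{Range}(A^\top)$ admits a representation $z=V\tilde y$ with $\tilde y_i=0$ for $i>r$, so that $\|z\|_2^2=\sum_{i=1}^r \tilde y_i^2$. Using $Az=U\Sigma\tilde y$ together with $U^\top H_s U=\operatorname{diag}(e_{s-1}(\lambda_{-1}),\ldots,e_{s-1}(\lambda_{-m}))/e_s(\lambda)$, a direct computation gives
\[
z^\top A^\top H_s A z=\sum_{i=1}^r p_i\,\tilde y_i^2, \qquad p_i:=\frac{\lambda_i\,e_{s-1}(\lambda_{-i})}{e_s(\lambda)}.
\]
Since the $\tilde y_i^2$ are nonnegative, it suffices to show that $\min_{1\le i\le r}p_i\ge \sigma_{\min}^2(A)/(\sigma_s^2(A)+\cdots+\sigma_{\min}^2(A))$.

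Next I would argue that the minimum is attained at $i=r$, by establishing the monotonicity $p_1\ge\cdots\ge p_r$. For $i<j$, applying the one-variable splits $e_{s-1}(\lambda_{-i})=\lambda_j e_{s-2}(\lambda_{-i,-j})+e_{s-1}(\lambda_{-i,-j})$ and $e_{s-1}(\lambda_{-j})=\lambda_i e_{s-2}(\lambda_{-i,-j})+e_{s-1}(\lambda_{-i,-j})$ (where $\lambda_{-i,-j}$ omits both $\lambda_i$ and $\lambda_j$) yields
\[
\lambda_i e_{s-1}(\lambda_{-i})-\lambda_j e_{s-1}(\lambda_{-j})=(\lambda_i-\lambda_j)\,e_{s-1}(\lambda_{-i,-j})\ge 0,
\]
since $\lambda_i\ge\lambda_j$. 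Thus the task reduces to proving $p_r\ge \lambda_r/\sum_{j=s}^r \lambda_j$.

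Invoking the Pascal-type identity $e_s(\lambda)=\lambda_r\, e_{s-1}(\lambda_1,\ldots,\lambda_{r-1})+e_s(\lambda_1,\ldots,\lambda_{r-1})$ together with $e_{s-1}(\lambda_{-r})=e_{s-1}(\lambda_1,\ldots,\lambda_{r-1})$, the target bound is equivalent, after rearrangement, to
\[
\Bigl(\sum_{j=s}^{r-1}\lambda_j\Bigr)\,e_{s-1}(\lambda_1,\ldots,\lambda_{r-1})\ \ge\ e_s(\lambda_1,\ldots,\lambda_{r-1}).
\]
I expect this combinatorial inequality to be the main obstacle. To prove it, I would expand the left-hand side as a double sum over pairs $(j,\mathcal T)$ with $j\in\{s,\ldots,r-1\}$ and $\mathcal T\in\binom{[r-1]}{s-1}$. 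Terms with $j\in\mathcal T$ contribute the nonnegative quantity $\lambda_j^2\prod_{i\in\mathcal T\setminus\{j\}}\lambda_i$, while terms with $j\notin\mathcal T$ sum to $\sum_{\mathcal S\in\binom{[r-1]}{s}} |\mathcal S\cap\{s,\ldots,r-1\}|\prod_{i\in\mathcal S}\lambda_i$. A pigeonhole argument forces every $s$-subset of $[r-1]$ to contain at least one index in $\{s,\ldots,r-1\}$, so this last sum dominates $e_s(\lambda_1,\ldots,\lambda_{r-1})$. Chaining the three steps then yields the stated bound.
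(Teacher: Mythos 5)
Your proof is correct. It shares the paper's starting point --- diagonalizing $z^\top A^\top H_s Az$ via the SVD so that the claim becomes a lower bound on the scalars $p_i=\lambda_i e_{s-1}(\lambda_{-i})/e_s(\lambda)$ --- but from there it diverges from the paper's route. The paper bounds every index uniformly: its Lemma \ref{lemma-xie-1125} gives $e_{s-1}(\lambda_{-i})/e_s(\lambda)\ge 1/(\sigma^2_{\min\{i,s\}}(A)+\sum_{j>s}\sigma_j^2(A))$ for each $i$, which yields a diagonal lower bound on $H_s$ itself and then on $A^\top H_sA$, after which a short monotonicity-of-$x/(x+c)$ argument shows each resulting diagonal entry dominates $\rho_{A,s}$. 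You instead prove the structural fact that $i\mapsto \lambda_i e_{s-1}(\lambda_{-i})$ is nonincreasing (via the clean identity $\lambda_ie_{s-1}(\lambda_{-i})-\lambda_je_{s-1}(\lambda_{-j})=(\lambda_i-\lambda_j)e_{s-1}(\lambda_{-i,-j})$, which is correct and not in the paper), so that only the single bound $p_r\ge\rho_{A,s}$ needs checking; that bound reduces to $\bigl(\sum_{j=s}^{r-1}\lambda_j\bigr)e_{s-1}(\lambda_1,\dots,\lambda_{r-1})\ge e_s(\lambda_1,\dots,\lambda_{r-1})$, which your double-counting/pigeonhole argument establishes. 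Amusingly, that last inequality is exactly the step $e_s(\lambda_{-i})\le e_{s-1}(\lambda_{-i})\sum_{j\ge s}(\lambda_{-i})_j$ that the paper asserts without detail inside Lemma \ref{lemma-xie-1125}, so your version actually supplies a justification the paper glosses over. What the paper's approach buys is the per-index estimate on every $\Lambda_{i,i}$, which it reuses later in the proof of Theorem \ref{thm5}; what yours buys is a sharper localization (the minimum is attained at $i=r$) and a fully self-contained combinatorial proof of the key symmetric-function inequality. The only cosmetic caveat is the $s=1$ instance of your splitting identity, where $e_{s-2}=e_{-1}$ should be read as $0$; this is a convention, not a gap.
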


The following result can be derived from the proof of Lemma 3.5 in \cite{rodomanov2020randomized}. Here, we include a simple proof for
completeness.

\begin{lemma}\label{lemma-xie-1125}
	Under the notations of Lemma \ref{keylemma-exp1}, for any $i\in[m]$, we have
	$$\frac{e_{s-1}(\lambda_{-i})}{e_s(\lambda)}\geq \frac{1}{\sigma^2_{\min\{i,s\}}(A)+\sum_{j=s+1}^{\operatorname{rank}(A)}\sigma^2_j(A)}.$$
\end{lemma}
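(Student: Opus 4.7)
The plan is to reduce the inequality to a clean bound on the ratio $e_s(\mu)/e_{s-1}(\mu)$ for an arbitrary sorted nonnegative vector $\mu$, and then specialize to $\mu=\lambda_{-i}$. Throughout, write $r=\operatorname{rank}(A)$, so that $\lambda=(\sigma_1^2(A),\ldots,\sigma_r^2(A),0,\ldots,0)^{\top}$ is already sorted in decreasing order.

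First, I would record the elementary splitting identity
\[
e_s(\lambda)\;=\;e_s(\lambda_{-i})\;+\;\lambda_i\,e_{s-1}(\lambda_{-i}),
\]
obtained directly from the definition of $e_s$ by separating the $s$-subsets of $[m]$ that contain the index $i$ from those that do not. Since $1\le s\le r$, the vector $\lambda_{-i}$ still has at least $s-1$ positive entries, so $e_{s-1}(\lambda_{-i})>0$, and the target inequality is equivalent to
\[
\frac{e_s(\lambda_{-i})}{e_{s-1}(\lambda_{-i})}\;+\;\lambda_i\;\le\;\sigma_{\min\{i,s\}}^2(A)\;+\;\sum_{j=s+1}^{r}\sigma_j^2(A).
\]

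Second, I would establish the auxiliary bound: for any sorted nonnegative vector $\mu_1\ge\cdots\ge\mu_k\ge 0$ and any $1\le s\le k$,
\[
\frac{e_s(\mu)}{e_{s-1}(\mu)}\;\le\;\mu_s+\mu_{s+1}+\cdots+\mu_k.
\]
This follows by partitioning the $s$-subsets of $[k]$ according to their largest element $j\ge s$, which yields the decomposition
\[
e_s(\mu)\;=\;\sum_{j=s}^{k}\mu_j\,e_{s-1}(\mu_1,\ldots,\mu_{j-1}),
\]
and then invoking the monotonicity $e_{s-1}(\mu_1,\ldots,\mu_{j-1})\le e_{s-1}(\mu)$ term by term, which is valid because every $\mu_\ell\ge 0$.

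Third, I would apply this bound to $\mu$ equal to the sorted version of $\lambda_{-i}$, splitting into cases. When $i\le s$, removing $\lambda_i=\sigma_i^2$ shifts the entries from position $i$ onward up by one, so the sorted entries of $\lambda_{-i}$ in positions $s,\ldots,r-1$ are precisely $\sigma_{s+1}^2,\ldots,\sigma_r^2$; the auxiliary bound gives $e_s(\lambda_{-i})/e_{s-1}(\lambda_{-i})\le\sum_{j=s+1}^{r}\sigma_j^2$, and adding $\lambda_i=\sigma_i^2=\sigma_{\min\{i,s\}}^2$ yields the desired inequality. When $i>s$, the same bookkeeping shows the sorted entries of $\lambda_{-i}$ from position $s$ through the last nonzero position are $\sigma_s^2,\ldots,\sigma_r^2$ with $\sigma_i^2$ deleted (and are exactly $\sigma_s^2,\ldots,\sigma_r^2$ when $i>r$, since then $\lambda_i=0$), giving $e_s(\lambda_{-i})/e_{s-1}(\lambda_{-i})\le\sum_{j=s}^{r}\sigma_j^2-\lambda_i$. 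Adding $\lambda_i$ recovers $\sigma_s^2+\sum_{j=s+1}^{r}\sigma_j^2$, which equals $\sigma_{\min\{i,s\}}^2+\sum_{j=s+1}^{r}\sigma_j^2$ because $\min\{i,s\}=s$ in this case.

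The only real obstacle is the index bookkeeping in the case split, namely tracking which singular value occupies each position after $\lambda_i$ is removed from the sorted list; once this is correct, the estimates align with the two forms of $\sigma_{\min\{i,s\}}^2$. The analytic content is entirely contained in the largest-element decomposition of $e_s(\mu)$ combined with monotonicity of elementary symmetric polynomials on the nonnegative orthant.
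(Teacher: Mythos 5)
Your proposal is correct and follows essentially the same route as the paper: the splitting identity $e_s(\lambda)=e_s(\lambda_{-i})+\lambda_i e_{s-1}(\lambda_{-i})$ followed by the bound $e_s(\lambda_{-i})\le e_{s-1}(\lambda_{-i})\sum_{j\ge s}(\lambda_{-i})_j$ and the same two-case index bookkeeping to identify the resulting sum with $\sigma^2_{\min\{i,s\}}(A)+\sum_{j=s+1}^{\operatorname{rank}(A)}\sigma_j^2(A)$. The only difference is that you actually justify the key inequality via the largest-element decomposition of $e_s(\mu)$ and monotonicity of $e_{s-1}$ on the nonnegative orthant, whereas the paper asserts it without proof.
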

\begin{proof}
	For any fixed $i\in[m]$, we have $$e_s(\lambda)=e_{s}(\lambda_{-i})+\lambda_i e_{s-1}(\lambda_{-i})\leq e_{s-1}(\lambda_{-i})\sum_{j=s}^{m}(\lambda_{-i})_{j}+\lambda_i e_{s-1}(\lambda_{-i}).$$
	Hence, 
	$$\frac{e_{s-1}(\lambda_{-i})}{e_s(\lambda)}\geq \frac{1}{\lambda_i+\sum_{j=s}^{m}(\lambda_{-i})_{j}}=\frac{1}{\lambda_{\min\{i,s\}}+\sum_{j=s+1}^{m}\lambda_{j}}= \frac{1}{\sigma^2_{\min\{i,s\}}(A)+\sum_{j=s+1}^{\operatorname{rank}(A)}\sigma^2_j(A)},$$
	where the last equality follows from the fact that $\lambda_j=\sigma_j^2(A)$ for $1\leq j\leq \operatorname{rank}(A)$ and  $\lambda_{\operatorname{rank}(A)+1}=\cdots=\lambda_m=0$.
\end{proof}

\section{Randomized block Kaczmarz with volume sampling}

In this section, we introduce the randomized block Kaczmarz method with volume sampling (RBKVS) for solving linear systems and analyze its convergence properties. The method is formally described in Algorithm \ref{RBKVS}. It can be observed that the canonical RK method can be recovered when the block size \(s=1\), with the index \(i_k\) chosen with probability \(\mathbb{P}(i_k=i)=\frac{\|A_i\|^2_2}{\|A\|^2_F}\).

\begin{algorithm}
\caption{Randomized block Kaczmarz with volume sampling (RBKVS)}
\label{RBKVS}
\begin{algorithmic}
	\Require $A\in\mathbb{R}^{m\times n},b\in\mathbb{R}^m$, $k=0$, initial point $x^0\in \mathbb{R}^n$, and $1\leq s \leq\text{rank}(A)$.
	\begin{enumerate}
		\item[1:]  Select $\mathcal{S}_k\sim\mbox{Vol}_{s}(AA^\top)$.
		\item[2:] Update $x^{k+1}=x^k-A_{\mathcal{S}_k }^{\dagger}(A_{\mathcal{S}_k }x^k-b_{\mathcal{S}_k})$. 
		\item[3:] If the stopping rule is satisfied, stop and go to output. Otherwise, set $k=k+1$ and return to Step $1$.
	\end{enumerate}
	\Ensure
	The approximate solution.
\end{algorithmic}
\end{algorithm}

\subsection{The linear convergence of the expected norm of the error}

In this subsection, we study the convergence properties of Algorithm \ref{RBKVS}. For convenience, given any matrix \( A \in \mathbb{R}^{m \times n} \) and \( 1 \leq s \leq \operatorname{rank}(A) \), we define
\begin{equation}
	\label{xie-def-rho}
	\rho_{A,s}:=\frac{\sigma_{\min}^2(A)}{ \sigma^2_{s}(A)+\cdots+ \sigma_{\min}^2(A) }.
\end{equation}

We have the following convergence result for Algorithm \ref{RBKVS} in solving different types of linear systems, whether consistent or inconsistent, full rank or rank-deficient.

\begin{theorem}\label{thm1}
	
	For any given linear system \(Ax = b\), let \(\{x^k\}_{k \geq 0}\) be the sequence generated by Algorithm \ref{RBKVS} with an arbitrary initial vector \(x^0 \in \mathbb{R}^n\). Define \(x^0_* = A^\dagger b + (I - A^\dagger A)x^0\) and \(r^* = Ax^0_* - b\).  Then 
	$$%	\begin{equation}
		\mathbb{E}[\lVert x^{k}-x^0_*\rVert^2_2] 
		\leq \left(1- \rho_{A,s}\right)^k\lVert x^0-x^0_*\rVert^2_2+\frac{e_{s-1}(\lambda_{-m})}{ e_{s}(\lambda) \rho_{A,s} }\lVert r^*\rVert^2_2,
		$$%	\end{equation}
	where $\rho_{A,s}$ is given by \eqref{xie-def-rho} and $\lambda:=(\sigma_1^2(A),\ldots,\sigma_{\min}^2(A),0,\ldots,0)^\top\in\mathbb{R}^m$.
\end{theorem}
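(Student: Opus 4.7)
The plan is to derive a one-step contraction of the form
$\mathbb{E}[\|x^{k+1}-x^0_*\|_2^2 \mid x^k] \leq (1-\rho_{A,s})\|x^k - x^0_*\|_2^2 + C\|r^*\|_2^2$
with an explicit coefficient $C = e_{s-1}(\lambda_{-m})/e_s(\lambda)$, and then iterate it geometrically.

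First I would rewrite the update as an error recursion. Using $b_{\mathcal{S}_k} = I_{\mathcal{S}_k} b$ and $A_{\mathcal{S}_k} x^0_* - b_{\mathcal{S}_k} = I_{\mathcal{S}_k}(Ax^0_* - b) = I_{\mathcal{S}_k} r^*$, one obtains
\[
x^{k+1} - x^0_* = (I - A_{\mathcal{S}_k}^\dagger A_{\mathcal{S}_k})(x^k - x^0_*) - A_{\mathcal{S}_k}^\dagger I_{\mathcal{S}_k} r^*.
\]
The first summand lies in $\operatorname{null}(A_{\mathcal{S}_k})$ and the second in $\operatorname{Range}(A_{\mathcal{S}_k}^\top) = \operatorname{null}(A_{\mathcal{S}_k})^\perp$, so they are orthogonal and their squared norms add. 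Expanding $\|(I - A_{\mathcal{S}_k}^\dagger A_{\mathcal{S}_k})(x^k-x^0_*)\|_2^2 = \|x^k-x^0_*\|_2^2 - (x^k-x^0_*)^\top A_{\mathcal{S}_k}^\dagger A_{\mathcal{S}_k}(x^k-x^0_*)$, taking conditional expectation on $\mathcal{S}_k$, and invoking Lemma \ref{keylemma-exp1} on the quadratic form and Lemma \ref{keylemma-exp2} on the residual term yields
\[
\mathbb{E}[\|x^{k+1}-x^0_*\|_2^2 \mid x^k] \leq \|x^k-x^0_*\|_2^2 - (x^k-x^0_*)^\top A^\top H_s A (x^k-x^0_*) + (r^*)^\top H_s r^*.
\]

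To convert the quadratic form into a linear contraction, I would verify by induction that $x^k - x^0_* \in \operatorname{Range}(A^\top)$ for every $k$. The base case is immediate since $x^0 - x^0_* = A^\dagger(Ax^0 - b) \in \operatorname{Range}(A^\dagger) = \operatorname{Range}(A^\top)$, and the inductive step is preserved because both $A_{\mathcal{S}_k}^\dagger A_{\mathcal{S}_k}(x^k - x^0_*)$ and $A_{\mathcal{S}_k}^\dagger I_{\mathcal{S}_k} r^*$ lie in $\operatorname{Range}(A_{\mathcal{S}_k}^\top) \subseteq \operatorname{Range}(A^\top)$. Lemma \ref{lemma-lowerbound} then delivers the key inequality $(x^k-x^0_*)^\top A^\top H_s A (x^k-x^0_*) \geq \rho_{A,s}\|x^k-x^0_*\|_2^2$. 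For the residual term, observe that $r^* = (AA^\dagger - I)b \in \operatorname{null}(A^\top) = \operatorname{span}\{u_i : i > \operatorname{rank}(A)\}$; combining this with the spectral form \eqref{def_HB} of $H_s$ gives
\[
(r^*)^\top H_s r^* = \frac{1}{e_s(\lambda)}\sum_{i>\operatorname{rank}(A)} e_{s-1}(\lambda_{-i})\,(u_i^\top r^*)^2.
\]
For each such index $i$, the vector $\lambda_{-i}$ is obtained from $\lambda$ by deleting a zero entry and is therefore a permutation of $\lambda_{-m}$, so $e_{s-1}(\lambda_{-i}) = e_{s-1}(\lambda_{-m})$. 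The sum collapses to $\tfrac{e_{s-1}(\lambda_{-m})}{e_s(\lambda)}\|r^*\|_2^2$. Assembling everything produces
\[
\mathbb{E}[\|x^{k+1}-x^0_*\|_2^2 \mid x^k] \leq (1-\rho_{A,s})\|x^k-x^0_*\|_2^2 + \frac{e_{s-1}(\lambda_{-m})}{e_s(\lambda)}\|r^*\|_2^2,
\]
and the theorem follows by taking total expectation, iterating, and bounding the geometric series $\sum_{j=0}^{k-1}(1-\rho_{A,s})^j \leq 1/\rho_{A,s}$.

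The main obstacle I anticipate is the sharp evaluation of the residual coefficient: instead of settling for the crude bound $\|H_s\|_2\|r^*\|_2^2$, one must exploit both the vanishing of $u_i^\top r^*$ on the range of $A$ and the permutation symmetry that keeps $e_{s-1}(\lambda_{-i})$ constant on the kernel block, which is what produces the tight factor $e_{s-1}(\lambda_{-m})/e_s(\lambda)$. Maintaining the $\operatorname{Range}(A^\top)$ invariance of $x^k - x^0_*$ across all iterations is a secondary but essential check required so that Lemma \ref{lemma-lowerbound} applies at every step.
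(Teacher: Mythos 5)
Your argument is correct and follows the paper's proof essentially step for step: the same error recursion and Pythagorean split, the same use of Lemmas \ref{keylemma-exp1}, \ref{keylemma-exp2}, and \ref{lemma-lowerbound}, the same $\operatorname{Range}(A^\top)$ invariance of $x^k-x^0_*$, and the same geometric-series wrap-up. The only divergence is the residual term, where you evaluate $(r^*)^\top H_s r^*$ exactly via $r^*\in\operatorname{null}(A^\top)$ and the permutation identity $e_{s-1}(\lambda_{-i})=e_{s-1}(\lambda_{-m})$ for $i>\operatorname{rank}(A)$; the paper instead uses the operator bound $H_s\preceq \frac{e_{s-1}(\lambda_{-m})}{e_s(\lambda)}I$, which --- contrary to your closing remark --- already produces the same tight coefficient, because $e_{s-1}(\lambda_{-m})=\max_i e_{s-1}(\lambda_{-i})$ (deleting the smallest eigenvalue maximizes $e_{s-1}$), so your sharper evaluation is valid but not actually needed.
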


\begin{proof}%[Proof of Theorem \ref{thm1}]
	According to the Step 2 in Algorithm \ref{RBKVS} and $b=Ax^0_*-r^*$, we have
	$$x^{k+1}-x^0_*=(I-A_{\mathcal{S}_k }^{\dagger}A_{\mathcal{S}_k })(x^k-x^0_*)-A_{\mathcal{S}_k }^{\dagger}r^*_{\mathcal{S}_k}.$$
	Since the range spaces of $A_{\mathcal{S}_k}^{\dagger}$ and $I-A_{\mathcal{S}_k}^{\dagger}A_{\mathcal{S}_k}$ are orthogonal, the Pythagorean Theorem implies
	$$
	\begin{aligned}
		\lVert x^{k+1}-x^0_*\rVert^2_2
		&=\lVert (I-A_{\mathcal{S}_k }^{\dagger}A_{\mathcal{S}_k })(x^k-x^0_*)\rVert^2_2+\lVert A_{\mathcal{S}_k }^{\dagger}r^*_{\mathcal{S}_k}\rVert^2_2\\
		&= (x^k-x^0_*)^\top (I-A_{\mathcal{S}_k }^{\dagger}A_{\mathcal{S}_k })(x^k-x^0_*)+ (r^*)^\top I_{\mathcal{S}_k }^\top  (A_{\mathcal{S}_k }^{\dagger})^\top A_{\mathcal{S}_k }^{\dagger} I_{\mathcal{S}_k } r^*.
	\end{aligned}
	$$
	Thus we have
	\begin{equation}
		\label{proof-xie-11242}
		\begin{aligned}
			\mathbb{E} [\lVert x^{k+1}-x^0_*\rVert^2_2 \mid x^k]
			= &(x^k-x^0_*)^\top (I-\mathbb{E} [A_{\mathcal{S}_k }^{\dagger}A_{\mathcal{S}_k }])(x^k-x^0_*) \\
			&+(r^*)^\top \mathbb{E} [I_{\mathcal{S}_k }^\top  (A_{\mathcal{S}_k }^{\dagger})^\top A_{\mathcal{S}_k }^{\dagger} I_{\mathcal{S}_k }]r^* \\
			\leq &\lVert x^k-x^0_*\rVert^2_2-(x^k-x^0_*)^\top  A^\top H_s A(x^k-x^0_*)+(r^*)^\top H_s r^*,
		\end{aligned}
	\end{equation}
	where the last inequality follows from Lemmas \ref{keylemma-exp1} and \ref{keylemma-exp2}. By the definition of $H_s$ as given in \eqref{def_HB}, we know that $\frac{e_{s-1}(\lambda_{-m})}{e_{s}(\lambda)}I\succeq H_s$. Hence,
	\begin{equation}
		\label{proof-xie-11243}
		(r^*)^\top H_s r^*\leq \frac{e_{s-1}(\lambda_{-m})}{e_{s}(\lambda)} \|r^*\|^2_2.
	\end{equation}
	By the iteration scheme of Algorithm \ref{RBKVS} and noting that $\operatorname{Range}(A^\dagger_{\mathcal{S}_k})=\operatorname{Range}(A^\top_{\mathcal{S}_k })\subseteq
	\operatorname{Range}(A^\top)$, we know that $x^k\in x^0+\operatorname{Range}(A^\top)$. Hence, 
	$$x^k-x^0_*\in x^0-x^0_*+\operatorname{Range}(A^\top)=A^\dagger (Ax^0-b)+\operatorname{Range}(A^\top)=\operatorname{Range}(A^\top),$$
	%So there exists a vector $ y\in\mathbb{R}^m$ such that $x^k-x^0_*=V\Sigma^\top U^\top y$. 
	which together  with Lemma \ref{lemma-lowerbound} implies that
	\begin{equation}\label{eqax}
		(x^k-x^0_*)^\top  A^\top H_s A(x^k-x^0_*)\geq \rho_{A,s}\lVert x^k-x^0_*\rVert^2_2.
	\end{equation}
	Substituting \eqref{proof-xie-11243} and \eqref{eqax} into \eqref{proof-xie-11242},  we have
	\begin{equation*}
		\mathbb{E} [\lVert x^{k+1}-x^0_*\rVert^2_2\mid x^k]=\left(1-\rho_{A,s}\right)\lVert x^k-x^0_*\rVert^2_2+\frac{ e_{s-1}(\lambda_{-m})}{ e_{s}(\lambda)}\lVert r^*\rVert^2_2.
	\end{equation*}
	Taking the expectation over the entire history, we have
	\begin{equation*}
		\begin{aligned}
			\mathbb{E} [\lVert x^k-x^0_*\rVert^2_2]
			&\leq\left(1-\rho_{A,s} \right)^k\lVert x^0-x^0_*\rVert^2_2+\frac{e_{s-1}(\lambda_{-m})}{e_{s}(\lambda)}\lVert r^*\rVert^2_2\left(\sum_{i=0}^{k-1}\left(1-\rho_{A,s}\right)^i\right)\\
			&\leq\left(1-\rho_{A,s}\right)^k\lVert x^0-x^0_*\rVert^2_2+\frac{e_{s-1}(\lambda_{-m})}{e_{s}(\lambda)\rho_{A,s}}\lVert r^*\rVert^2_2
		\end{aligned}
	\end{equation*}
	as desired. This completes the proof of this theorem.
\end{proof}

If the block size $s=1$, then Algorithm \ref{RBKVS} recovers the canonical RK method, and Theorem  \ref{thm1} now becomes
$$
\mathbb{E}[\lVert x^{k}-x^0_*\rVert^2_2] 
\leq \left(1- \frac{\sigma^2_{\min}(A)}{\|A\|^2_F}\right)^k\lVert x^0-x^0_*\rVert^2_2+\frac{\lVert r^*\rVert^2_2}{\sigma^2_{\min}(A) },
$$
which coincides with the convergence result of RK \cite{strohmer2009randomized,needell2010randomized,lok2024subspace}.
Assume that the linear system \(Ax = b\) is consistent (\(r^* = 0\)) and note that \(1 - \rho_{A,s} \leq e^{-\rho_{A,s}}\), then Algorithm \ref{RBKVS} requires 
\[
K_{s}:= \frac{1}{\rho_{A,s}} \log\left(\frac{\lVert x^0 - x^0_* \rVert^2_2}{\varepsilon}\right).
\]
number of iterations  to achieve accuracy \(\varepsilon\) in terms of the expected norm of the error.

Let us consider \(1 \leq s_1 < s_2 \leq \operatorname{rank}(A)\) and compare the efficiency estimates of Algorithm \ref{RBKVS} for solving a consistent linear system with \(s_1\) coordinates to that with \(s_2\) coordinates. We obtain that
\begin{equation}\label{rate-improved}
	\frac{K_{s_1}}{K_{s_2}}=\frac{\rho_{A,s_2}}{\rho_{A,s_1}}=\frac{\sum_{i=s_1}^{\operatorname{rank}(A)} \sigma^2_i(A)}{\sum_{i=s_2}^{\operatorname{rank}(A)} \sigma^2_i(A)}.
\end{equation}
This implies that increasing the block size from \(s_1\) to \(s_2\) can enhance the iteration complexity of Algorithm \ref{RBKVS} by a factor of \(\frac{\sum_{i=s_1}^{\operatorname{rank}(A)} \sigma^2_i(A)}{\sum_{i=s_2}^{\operatorname{rank}(A)} \sigma^2_i(A)}\). Notably, when \(A\) has several large singular values, this ratio can be substantial.

In particular, we compare Algorithm \ref{RBKVS} with the RK method in terms of computational cost. Assuming that selecting \(\mathcal{S}_k\) is a low-cost operation, which we will discuss how to do efficiently in Section \ref{vs-eff}, this implies that the primary computational cost in each step of Algorithm \ref{RBKVS} is Step 2. Hence, the computational cost per step for Algorithm \ref{RBKVS} and the RK method is \(O(s^2n)\) and \(O(n)\), respectively. Consequently, the total computational cost for Algorithm \ref{RBKVS} and the RK method to achieve accuracy \(\varepsilon\) is
$$
O\left(s^2n K_{s}\right) \ \text{and} \ O\left(n K_{1}\right),
$$
respectively. Thus, if \(s^2\sum_{i=s}^{\operatorname{rank}(A)} \sigma^2_i(A)\leq \sum_{i=1}^{\operatorname{rank}(A)} \sigma^2_i(A)=\|A\|_F^2\), meaning $A$ has several relatively large singular values, then theoretically, Algorithm \ref{RBKVS} is superior to the RK method. 

To better understand this, let us consider a commonly encountered scenario.  
Consider the matrix
$$
A=\left[e_1+\delta e_2,\ldots,e_1+\delta e_{n+1} \right]^\top\in\mathbb{R}^{(n+1)\times n},
$$
where $\delta>0$ and $e_1,\ldots,e_{n+1}$ are the standard basis vectors in $\mathbb{R}^{n+1}$. This matrix is often used to illustrate the error bound in the subset selection problem; see \cite{boutsidis2014near,cai2024interlacing}. 
We have
\[
\sigma_1(A) = \sqrt{n+\delta^2}, \quad \sigma_2(A) = \cdots = \sigma_n(A) = \delta.
\]
Assume that \(s \geq 2\). Now,
$
s^2\sum_{i=s}^{\operatorname{rank}(A)} \sigma^2_i(A) \leq \sum_{i=1}^{\operatorname{rank}(A)} \sigma^2_i(A)
$
is equivalent to
\[
s^2(n-s+1)\delta^2 \leq n + n\delta^2.
\]
Hence, if \(\delta\) is small enough, e.g., \(\delta = \frac{1}{n}\), the above inequality holds. In such a case, we know that the first singular value of \(A\) is relatively large, while the subsequent singular values are smaller.

\subsection{The linear convergence of the norm of the expected error}

In the next section, we will compare the RBKVS method and its momentum variant (mRBKVS), demonstrating that mRBKVS achieves accelerated linear convergence. 
To this end, we present results on the convergence of the norm of the expected error \(\lVert \mathbb{E}[x^k - x_*^0]\rVert_2^2\) for RBKVS.

\begin{theorem} 
	\label{Theroem 2}
	Suppose that the linear system \eqref{eq1} is consistent and $x^0\in\mathbb{R}^n$ is an arbitrary initial vector. Let $x_*^0=A^\dagger b+(I-A^\dagger A)x^0$. Then the iteration sequence $\{x^k\}_{k\geq 0}$ generated by Algorithm {\rm\ref{RBKVS}}  satisfies
	$$
	\lVert \mathbb{E}[x^{k}-x_*^0]\rVert^2_2 \leq \left(1- \rho_{A,s}\right)^{2k} \lVert x^0-x_*^0\rVert^2_2,
	$$
	where $\rho_{A,s}$ is given by \eqref{xie-def-rho}.
\end{theorem}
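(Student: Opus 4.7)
The plan is to exploit the consistency assumption, which kills the $r^{*}$ term and makes the error recursion purely multiplicative. Since $b=Ax_{*}^{0}$, Step 2 of Algorithm \ref{RBKVS} gives
$$
x^{k+1}-x_{*}^{0}=(I-A_{\mathcal{S}_{k}}^{\dagger}A_{\mathcal{S}_{k}})(x^{k}-x_{*}^{0}).
$$
Taking conditional expectation with respect to the history up to step $k$ and invoking Lemma \ref{keylemma-exp1} yields $\mathbb{E}[x^{k+1}-x_{*}^{0}\mid x^{k}]=(I-A^{\top}H_{s}A)(x^{k}-x_{*}^{0})$. Taking full expectation and iterating $k$ times, I would obtain
$$
\mathbb{E}[x^{k}-x_{*}^{0}]=(I-A^{\top}H_{s}A)^{k}(x^{0}-x_{*}^{0}).
$$

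Next I would observe that the error lives in $\operatorname{Range}(A^{\top})$. Indeed $x^{0}-x_{*}^{0}=A^{\dagger}(Ax^{0}-b)\in\operatorname{Range}(A^{\top})$, and $A^{\top}H_{s}A$ maps $\operatorname{Range}(A^{\top})$ into itself, so this subspace is invariant under all the iterates of $I-A^{\top}H_{s}A$. The whole argument therefore reduces to a spectral estimate of $I-A^{\top}H_{s}A$ restricted to $\operatorname{Range}(A^{\top})$.

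The main step is to pin down that the spectrum of $A^{\top}H_{s}A$ on $\operatorname{Range}(A^{\top})$ lies in the interval $[\rho_{A,s},1]$. The lower bound is immediate from Lemma \ref{lemma-lowerbound}. For the upper bound, I would use the fact that for every $\mathcal{S}$ the matrix $A_{\mathcal{S}}^{\dagger}A_{\mathcal{S}}$ is an orthogonal projector and hence satisfies $0\preceq A_{\mathcal{S}}^{\dagger}A_{\mathcal{S}}\preceq I$; averaging over $\mathcal{S}\sim\operatorname{Vol}_{s}(AA^{\top})$ and applying Lemma \ref{keylemma-exp1} gives $A^{\top}H_{s}A\preceq I$. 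Since $A^{\top}H_{s}A$ is symmetric, these two bounds imply that $I-A^{\top}H_{s}A$ is a symmetric operator on $\operatorname{Range}(A^{\top})$ with operator norm at most $1-\rho_{A,s}$.

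Putting these pieces together, for any $v\in\operatorname{Range}(A^{\top})$ one has $\|(I-A^{\top}H_{s}A)^{k}v\|_{2}\leq(1-\rho_{A,s})^{k}\|v\|_{2}$. Applying this to $v=x^{0}-x_{*}^{0}$ and squaring delivers the claimed bound. The only non-routine ingredient is the invariant-subspace spectral argument; the remainder is essentially a cleaner version of the computation in Theorem \ref{thm1}, with the residual term vanishing because of consistency and with the expectation taken before, rather than after, applying the norm.
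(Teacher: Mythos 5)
Your proposal is correct and follows essentially the same route as the paper: derive $\mathbb{E}[x^{k}-x_*^0]=(I-A^{\top}H_sA)^{k}(x^0-x_*^0)$ via Lemma \ref{keylemma-exp1}, note $x^0-x_*^0\in\operatorname{Range}(A^{\top})$, and bound the restricted spectrum using Lemma \ref{lemma-lowerbound}. In fact you are slightly more careful than the paper, which only cites the lower bound; your explicit observation that $A_{\mathcal{S}}^{\dagger}A_{\mathcal{S}}\preceq I$ yields $A^{\top}H_sA\preceq I$ is implicitly needed for the inequality $(I-A^{\top}H_sA)^{2k}\preceq(1-\rho_{A,s})^{2k}I$ on $\operatorname{Range}(A^{\top})$, so including it is a welcome addition.
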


\begin{proof}
	From the Step 2 of Algorithm \ref{RBKVS}, we know that
	$$
	x^{k+1}-x_*^0=(I- A_{\mathcal{S}_k}^{\dagger}A_{\mathcal{S}_k})(x^k-x_*^0).
	$$
	Taking expectations and using Lemma \ref{keylemma-exp1}, we have
	$ \mathbb{E}[x^{k+1}-x_*^0 ]
	=(I- A^{\top}H_s A) \mathbb{E}[x^k-x_*^0]$
	and hence,
	$$
	\mathbb{E}[x^{k}-x_*^0 ]=(I- A^{\top}H_s A)^k (x^0-x_*^0).
	$$
	Then, we have
	$$
	\|\mathbb{E}[x^{k}-x_*^0 ]\|^2_2=(x^0-x_*^0)^\top (I- A^{\top}H_s A)^{2k} (x^0-x_*^0)\leq \left(1- \rho_{A,s}\right)^{2k} \lVert x^0-x_*^0\rVert^2_2,
	$$
	where the last inequality follows from $x^0-x^0_*=A^\dagger(Ax^0-b)\in\operatorname{Range}(A^\top)$ and Lemma \ref{lemma-lowerbound}. %the similar arguments as in \eqref{eqax}. % the proof of Theorem \ref{thm1} 
	This completes the proof.
\end{proof}

The similarity between \(\mathbb{E} \left[\lVert x^k-x_*^0 \rVert^2_2\right]\) and \(\lVert \mathbb{E}[x^k-x_*^0] \rVert^2_2\) might lead to confusion. The convergence of \(\lVert \mathbb{E}[x^k-x_*^0] \rVert^2_2\) is actually weaker than that of \(\mathbb{E} \left[\lVert x^k-x_*^0 \rVert^2_2\right]\). By definition, if \(\mathbb{E}[x^k]\) is bounded for all \(x^k \in \mathbb{R}^n\), we have
\[
\mathbb{E}[\lVert x^k-x_*^0 \rVert^2_2] = \lVert \mathbb{E}[x^k-x_*^0] \rVert^2_2 + \mathbb{E}\left[\lVert x^k-\mathbb{E}[x^k] \rVert^2_2\right],
\]
which implies that the convergence of \(\mathbb{E} \left[\lVert x^k-x_*^0 \rVert^2_2\right]\) ensures the convergence of \(\lVert \mathbb{E}[x^k-x_*^0] \rVert^2_2\), but not vice versa.
In the subsequent section, we observe that although the expected norm of the error \(\mathbb{E} \left[\lVert x^k-x^0_* \rVert^2_2\right]\) for mRBKVS also converges linearly, its rate of convergence is slower than that of RBKVS. Consequently, we also examine the norm of the expected error \(\lVert \mathbb{E}[x^k-x^0_*] \rVert^2_2\), where mRBKVS demonstrates an advantage over RBKVS. This is why we also investigate the convergence of \(\lVert \mathbb{E}[x^k-x^0_*] \rVert^2_2\) for RBKVS.

\section{Acceleration by heavy ball momentum}
In this section, we provide the momentum-induced RBKVS (mRBKVS) method for solving the  linear system \eqref{eq1}. First, we give a short description of the heavy ball momentum (HBM) method. Consider the  unconstrained minimization problem
$
\min\limits_{x\in\mathbb{R}^n} f(x),
$
where $f$ is a differentiable convex function.
To solve this problem, the gradient descent method
with momentum of Polyak \cite{polyak1964some} takes the form
\begin{equation}\label{hbm}
	x^{k+1}=x^{k}-\omega \nabla f\big(x^{k}\big)+\beta\big(x^{k}-x^{k-1}\big),
\end{equation}
where $\omega>0$ is the stepsize, $\beta$ is the momentum parameter, and $\nabla f\left(x^{k}\right)$ denotes the gradient of $f$ at $x^k$.  When $\beta=0$, the method reduces to the standard gradient descent method. If the full gradient in \eqref{hbm} is replaced by the unbiased estimate of the gradient, then it becomes the  stochastic
HBM (SHBM) method.
In \cite{ghadimi2015global}, the authors showed that the deterministic HBM method
converges globally and sublinearly for smooth and convex functions. For the SHBM, one may refer to \cite{sebbouh2021almost,garrigos2023handbook} for more discussions.

Inspired by the success of the SHBM method, in this section  we incorporate the HBM into our RBKVS method, obtaining the mRBKVS method described in Algorithm \ref{RBKVSm}.  In the rest of this section, we will study the convergence properties of the proposed mRBKVS method.

\begin{algorithm}
\caption{RBKVS with momentum (mRBKVS)}
\label{RBKVSm}
\begin{algorithmic}
\Require $A\in\mathbb{R}^{m\times n},b\in\mathbb{R}^m$, $k=0$, initial point $x^0=x^1\in\mathbb{R}^n$, and $1\leq s\leq\text{rank}(A)$.
\begin{enumerate}
\item[1:]  Select $\mathcal{S}_k\sim\mbox{Vol}_{s}(AA^\top)$.

\item[2:] Compute $x^{k+1}=x^k-\omega A_{\mathcal{S}_k}^{\dagger}(A_{\mathcal{S}_k}x^k-b_{\mathcal{S}_k})+\beta(x^k-x^{k-1})$.

\item[3:] If the stopping rule is satisfied, stop and go to output. Otherwise, set $k=k+1$ and return Step 2.
\end{enumerate}
\Ensure
The approximate solution.
\end{algorithmic}
\end{algorithm}

\subsection{Convergence of iterates}
We have the following convergence result for Algorithm \ref{RBKVSm}.

\begin{theorem}
	\label{thm4}
	Suppose that the linear system \eqref{eq1} is consistent and initial points $x^0=x^1$. Let $x^0_*=A^\dagger b+(I-A^\dagger A)x^0$, $\rho_{A,s}$ be given by \eqref{def_HB}, and $\lambda=(\sigma_1^2(A),\ldots,\sigma_{\min}^2(A),0,\ldots,0)^\top\in\mathbb{R}^m$. Assume $0<\omega<2$ and $\beta\geq0$, and that the expressions 
	$$\gamma_1:=1+3\beta+2\beta^2-\omega(2-\omega+\beta)\rho_{A,s} \ \ \text{and} \ \ \gamma_2:=\beta+2\beta^2+\omega\beta\frac{e_{s-1}(\lambda_{-m})}{e_{s}(\lambda)}\|A\|^2_2$$
	satisfy $\gamma_1+\gamma_2<1$. Then the iteration sequence $\{x^k\}_{k\geq 0}$ generated by Algorithm \ref{RBKVSm} satisfies 
	\begin{equation}
		\mathbb{E}[\lVert x^{k}-x^0_*\rVert^2_2 ]
		\leq \rho^k(1+q)\lVert x^0-x^0_*\rVert^2_2,
	\end{equation}
	where $\rho=\frac{\gamma_1+\sqrt{\gamma_1^2+4\gamma_2}}{2}$ and $q=\rho-\gamma_1$.
\end{theorem}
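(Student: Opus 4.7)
Set $e^k := x^k - x^0_*$ and $P_k := A^\dagger_{\mathcal{S}_k} A_{\mathcal{S}_k}$, the orthogonal projector onto $\operatorname{Range}(A_{\mathcal{S}_k}^\top)$. Since the system is consistent and $x^0_*$ is a solution, $A_{\mathcal{S}_k}x^k - b_{\mathcal{S}_k} = A_{\mathcal{S}_k}e^k$, so Step~2 of Algorithm~\ref{RBKVSm} becomes
\[
e^{k+1} = (1+\beta)\, e^k - \omega P_k e^k - \beta\, e^{k-1}.
\]
Expanding $\|e^{k+1}\|_2^2$ via $\|a-b-c\|_2^2 = \|a\|_2^2+\|b\|_2^2+\|c\|_2^2 - 2\langle a,b\rangle - 2\langle a,c\rangle + 2\langle b,c\rangle$, using the projector identity $\|P_k e^k\|_2^2 = (e^k)^\top P_k e^k$, and then taking the conditional expectation over $\mathcal{S}_k$ (so that $\mathbb{E}[P_k]$ becomes $A^\top H_s A$ by Lemma~\ref{keylemma-exp1}), I obtain a quadratic expression in $e^k, e^{k-1}$ with two mixed terms: $-2(1+\beta)\beta (e^k)^\top e^{k-1}$ and $+2\omega\beta (e^k)^\top A^\top H_s A e^{k-1}$. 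The plan is to bound these two mixed terms so that the resulting coefficients of $\|e^k\|_2^2$ and $\|e^{k-1}\|_2^2$ match $\gamma_1$ and $\gamma_2$ exactly, producing a two-step recursion $F_{k+1} \leq \gamma_1 F_k + \gamma_2 F_{k-1}$ for $F_k := \mathbb{E}[\|e^k\|_2^2]$.

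The decisive step — and the main obstacle — is the correct handling of $2\omega\beta (e^k)^\top A^\top H_s A e^{k-1}$. Any naive Cauchy--Schwarz would drag the operator-norm factor $\frac{e_{s-1}(\lambda_{-m})}{e_s(\lambda)}\|A\|_2^2$ into the coefficient of $\|e^k\|_2^2$ and ruin $\gamma_1$. Instead, I invoke the PSD inequality $2\,u^\top M v \leq u^\top M u + v^\top M v$ with $M = A^\top H_s A \succeq 0$, splitting the cross term symmetrically. The ``$u^\top M u$'' piece $\omega\beta (e^k)^\top A^\top H_s A e^k$ is then absorbed into the existing $-\omega(2+2\beta-\omega)(e^k)^\top A^\top H_s A e^k$ coefficient, collapsing it to $-\omega(2+\beta-\omega)(e^k)^\top A^\top H_s A e^k$. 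Because $e^k \in \operatorname{Range}(A^\top)$ (by the range property of $A^\dagger_{\mathcal{S}}$), Lemma~\ref{lemma-lowerbound} now applies and yields the coefficient $-\omega(2-\omega+\beta)\rho_{A,s}\|e^k\|_2^2$ — exactly the negative part of $\gamma_1$. The ``$v^\top M v$'' leftover $\omega\beta (e^{k-1})^\top A^\top H_s A e^{k-1}$ is bounded, using $H_s \preceq \frac{e_{s-1}(\lambda_{-m})}{e_s(\lambda)} I$, by the $\omega\beta \frac{e_{s-1}(\lambda_{-m})}{e_s(\lambda)}\|A\|_2^2\|e^{k-1}\|_2^2$ contribution to $\gamma_2$. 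For the remaining cross term I apply the elementary $-2u^\top v \leq \|u\|_2^2 + \|v\|_2^2$, obtaining $(\beta+\beta^2)(\|e^k\|_2^2 + \|e^{k-1}\|_2^2)$. Collecting the coefficients: $(1+\beta)^2 + (\beta+\beta^2) = 1+3\beta+2\beta^2$ on the $\|e^k\|_2^2$ side and $\beta^2 + (\beta+\beta^2) = \beta + 2\beta^2$ on the $\|e^{k-1}\|_2^2$ side, which reproduce $\gamma_1$ and $\gamma_2$ on the nose.

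It then remains to solve the scalar recursion $F_{k+1} \leq \gamma_1 F_k + \gamma_2 F_{k-1}$. By construction $\rho$ is the dominant root of $\lambda^2 - \gamma_1\lambda - \gamma_2 = 0$, so $\rho^2 = \gamma_1\rho + \gamma_2$ and, setting $q = \rho - \gamma_1$, one has $q\rho = \gamma_2$ and $q \geq 0$. Rewriting $\gamma_1 = \rho - q$, $\gamma_2 = q\rho$, the recursion becomes the contraction
\[
F_{k+1} + q F_k \leq \rho\bigl(F_k + q F_{k-1}\bigr),
\]
which, iterated together with $F_0 = F_1 = \|x^0 - x_*^0\|_2^2$ from the initialization $x^0 = x^1$, yields the claimed geometric decay at rate $\rho$ with prefactor $1+q$. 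The standing assumption $\gamma_1+\gamma_2 < 1$ enforces $\rho < 1$: the characteristic polynomial evaluated at $\lambda = 1$ equals $1-\gamma_1-\gamma_2 > 0$, placing both roots below $1$; and $\omega \in (0,2)$, $\beta \geq 0$ keep $\omega(2+\beta-\omega) > 0$, so that the application of Lemma~\ref{lemma-lowerbound} to the combined quadratic form preserves the correct sign.
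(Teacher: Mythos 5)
Your proposal is correct and follows essentially the same route as the paper: expand the squared norm of the momentum recursion, absorb the projector cross term via $2u^\top M v \le u^\top M u + v^\top M v$ with $M = A_{\mathcal{S}_k}^\dagger A_{\mathcal{S}_k}$ (which is literally the paper's ``convexity of $f_{\mathcal{S}_k}$'' step), take expectations using Lemma \ref{keylemma-exp1}, and bound via Lemma \ref{lemma-lowerbound} and $H_s \preceq \tfrac{e_{s-1}(\lambda_{-m})}{e_s(\lambda)}I$ to reach the identical recursion $F_{k+1}\le \gamma_1 F_k + \gamma_2 F_{k-1}$. Your closing argument is simply an inline re-derivation of the cited Lemma \ref{lemma2}, so nothing substantive differs.
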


The following  lemma  is essential to our proof.
\begin{lemma}[\cite{loizou2020momentum}, Lemma 8.1]	
	\label{lemma2}	
	Fix $F_1=F_0\geq 0$, and let $\{F_k\}_{k\geq 0}$ be a sequence of nonnegative real numbers satisfying the relation 
	$$F_{k+1}\leq \gamma_1 F_k +\gamma_2F_{k-1}, \ \forall \ k\geq 1,$$
	where $\gamma_2\geq 0, \gamma_1+\gamma_2<1$ and at least one of the coefficients $\gamma_1,\gamma_2$ is positive. Then the sequence satisfies the relation 
	$$F_{k+1}\leq \rho^k(1+q)F_0, \ \forall \ k\geq 1,$$
	where $\rho=\frac{\gamma_1+\sqrt{\gamma_1^2+4\gamma_2}}{2}$ and $q=\rho-\gamma_1\geq 0$.
\end{lemma}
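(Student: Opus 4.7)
The plan is to prove the bound $F_{k+1} \leq \rho^k(1+q) F_0$ by a short strong induction on $k$, with all the work driven by the single algebraic identity $\rho^2 = \gamma_1 \rho + \gamma_2$ — the statement that $\rho$ is a root of the characteristic polynomial $x^2 - \gamma_1 x - \gamma_2$ of the linear recurrence underlying the hypothesis. Beyond this identity, the only preliminary properties I need are $q \geq 0$ and $\rho \in [0,1)$.

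First I would record three facts that follow directly from the quadratic formula: (i) $\rho^2 = \gamma_1\rho + \gamma_2$; (ii) $q = \rho - \gamma_1 \geq 0$, since $\sqrt{\gamma_1^2 + 4\gamma_2} \geq |\gamma_1| \geq \gamma_1$ thanks to $\gamma_2 \geq 0$; and (iii) $\rho \in [0,1)$. Nonnegativity of $\rho$ is immediate from $\gamma_2 \geq 0$ (the hypothesis ``at least one of $\gamma_1,\gamma_2$ is positive'' rules out the degenerate $\rho = 0$ case), and if instead $\rho \geq 1$ then dividing (i) by $\rho$ yields $\rho \leq \gamma_1 + \gamma_2/\rho \leq \gamma_1 + \gamma_2 < 1$, a contradiction.

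Next, the base case $k=1$. Since $F_1 = F_0$, the recursion gives $F_2 \leq (\gamma_1+\gamma_2) F_0$. Using (i) and $q = \rho-\gamma_1$, one computes
\[
\rho(1+q) = \rho + \rho(\rho-\gamma_1) = \rho + \rho^2 - \gamma_1\rho = \rho + \gamma_2,
\]
so $\rho(1+q) - (\gamma_1+\gamma_2) = \rho - \gamma_1 = q \geq 0$, yielding $F_2 \leq \rho(1+q) F_0$. For the inductive step, assuming $F_j \leq \rho^{j-1}(1+q) F_0$ for all $1 \leq j \leq k$ (the $j=1$ case $F_1 = F_0 \leq (1+q)F_0$ is immediate from $q \geq 0$), the recursion and the inductive hypothesis give
\[
F_{k+1} \leq \gamma_1 F_k + \gamma_2 F_{k-1} \leq (\gamma_1 \rho^{k-1} + \gamma_2\rho^{k-2})(1+q)F_0 = \rho^{k-2}(\gamma_1\rho + \gamma_2)(1+q) F_0 = \rho^k(1+q) F_0,
\]
invoking (i) at the last step; this closes the induction.

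The main (and essentially only) obstacle is verifying the preliminary facts in Step 1, which is where each hypothesis is genuinely used: $q \geq 0$ uses $\gamma_2 \geq 0$, the strict bound $\rho < 1$ uses $\gamma_1 + \gamma_2 < 1$, and the positivity of at least one of $\gamma_1,\gamma_2$ serves to exclude the degenerate $\rho = 0$ case (where the bound would be trivial anyway). Once these are in place, the induction is purely mechanical, and the key structural observation is simply that the ansatz $\rho^{k-1}(1+q)F_0$ is built precisely to collapse the two-term recursion coefficients $\gamma_1\rho + \gamma_2$ into $\rho^2$.
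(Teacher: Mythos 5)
Your overall strategy is sound, and the algebraic facts you isolate --- $\rho^2=\gamma_1\rho+\gamma_2$, $q\ge 0$, $\rho\in[0,1)$ --- are all correctly verified, as is the base case. (For the record, the paper does not prove this lemma itself; it imports it from the cited reference.) However, there is a genuine gap in your inductive step: to pass from $F_{k+1}\le \gamma_1F_k+\gamma_2F_{k-1}$ to $F_{k+1}\le(\gamma_1\rho^{k-1}+\gamma_2\rho^{k-2})(1+q)F_0$ you multiply the inductive bound $F_k\le\rho^{k-1}(1+q)F_0$ by $\gamma_1$, which preserves the inequality only if $\gamma_1\ge 0$. The hypotheses of the lemma do not guarantee this: they require only $\gamma_2\ge 0$, $\gamma_1+\gamma_2<1$, and that at least one coefficient be positive, which is compatible with, say, $\gamma_1=-1/2$, $\gamma_2=1/2$. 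So as written your argument establishes the lemma only under the additional assumption $\gamma_1\ge 0$. (That assumption does happen to hold in the paper's application, where $\gamma_1\ge \beta+2\beta^2\ge 0$, but the lemma is stated and invoked as a general fact.)

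The repair is standard, and it is essentially the argument in the cited reference: instead of inducting on $F_k$ alone, work with the Lyapunov quantity $G_k:=F_k+qF_{k-1}$. Adding $qF_k$ to both sides of the recursion and using your identities $\gamma_1+q=\rho$ and $\gamma_2=\rho q$ (the latter is just $\rho^2=\gamma_1\rho+\gamma_2$ rearranged) gives
$$G_{k+1}=F_{k+1}+qF_k\le(\gamma_1+q)F_k+\gamma_2F_{k-1}=\rho F_k+\rho qF_{k-1}=\rho G_k,$$
with no sign condition on $\gamma_1$, because the coefficients are matched by exact identities rather than by majorizing $F_k$ and $F_{k-1}$ separately. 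Iterating (which uses $\rho\ge 0$) and dropping the nonnegative term $qF_k$ yields $F_{k+1}\le G_{k+1}\le\rho^kG_1=\rho^k(1+q)F_0$. I would either add the hypothesis $\gamma_1\ge 0$ explicitly to your version or switch to this Lyapunov formulation.
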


\begin{proof}[Proof of Theorem \ref{thm4}]
	First, we have
	\begin{equation*}
		\begin{aligned}
			\lVert x^{k+1}-x^0_*\rVert^2_2
			&=\lVert x^k-\omega A_{\mathcal{S}_k}^{\dagger}A_{\mathcal{S}_k}(x^k-x^0_*)+\beta(x^k-x^{k-1})-x^0_*\rVert^2_2\\
			&=\underbrace{\lVert x^k-x^0_*-\omega A_{\mathcal{S}_k}^{\dagger}A_{\mathcal{S}_k}(x^k-x^0_*)\rVert^2_2}_{T_1}
			+ \underbrace{\beta^2\lVert x^k-x^{k-1}\rVert^2_2}_{T_2}\\
			&\ \ \ +\underbrace{2\beta\left\langle x^k-x^0_*-\omega A_{\mathcal{S}_k}^{\dagger}A_{\mathcal{S}_k}(x^k-x^0_*),x^k-x^{k-1}\right\rangle}_{T_3}.
		\end{aligned}
	\end{equation*}
	We now analyze the three expressions $T_1,T_2$ and $T_3$ separately. The first term can be written as
	\begin{equation*}
		\begin{aligned}
			T_1
			&=(x^k-x^0_*)^\top(I-2\omega A_{\mathcal{S}_k}^{\dagger}A_{\mathcal{S}_k}+\omega^2 A_{\mathcal{S}_k}^{\dagger}A_{\mathcal{S}_k})(x^k-x^0_*)\\
			&=\lVert x^k-x^0_*\rVert^2_2-\omega(2-\omega)(x^k-x^0_*)^\top A_{\mathcal{S}_k}^{\dagger}A_{\mathcal{S}_k}(x^k-x^0_*).
		\end{aligned}
	\end{equation*}
	The second term can be bounded as
	\begin{equation*}
		\begin{aligned}
			T_2=\beta^2\lVert (x^k-x^0_*)+(x^0_*-x^{k-1})\rVert^2_2\leq 2\beta^2\lVert x^k-x^0_*\rVert^2_2+2\beta^2\lVert x^{k-1}-x^0_*\rVert^2_2.
		\end{aligned}
	\end{equation*}
	We now bound the third term
	\begin{equation*}
		\begin{aligned}
			T_3
			&=2\beta \left\langle x^k-x^0_*,x^k-x^0_*+x^0_*-x^{k-1}\right\rangle
			+2\omega\beta \left\langle A_{\mathcal{S}_k}^{\dagger}A_{\mathcal{S}_k}(x^k-x^0_*),x^{k-1}-x^k\right\rangle\\
			&=2\beta\lVert x^k-x^0_*\rVert^2_2
			+2\beta\langle x^k-x^0_*,x^0_*-x^{k-1}\rangle
			+2\omega\beta \left\langle A_{\mathcal{S}_k}^{\dagger}A_{\mathcal{S}_k}(x^k-x^0_*),x^{k-1}-x^k\right\rangle\\
			&=\beta\lVert x^k-x^0_*\rVert^2_2
			+\beta\lVert x^k-x^{k-1}\rVert^2_2-\beta\lVert x^{k-1}-x^0_{*}\rVert^2_2
			+2\omega\beta \left\langle A_{\mathcal{S}_k}^{\dagger}A_{\mathcal{S}_k}(x^k-x^0_*),x^{k-1}-x^k\right\rangle\\
			&\leq 3\beta\lVert x^k-x^0_*\rVert^2_2
			+\beta\lVert x^{k-1}-x^0_{*}\rVert^2_2
			+2\omega\beta \left\langle A_{\mathcal{S}_k}^{\dagger}A_{\mathcal{S}_k}(x^k-x^0_*),x^{k-1}-x^k\right\rangle.
		\end{aligned}
	\end{equation*}
	Combining the above inequalities, we have 
	\begin{equation*}
		\begin{aligned}
			\lVert x^{k+1}-x^0_*\rVert^2_2
			&\leq
			(1+3\beta+2\beta^2)\lVert x^k-x^0_*\rVert^2_2-\omega(2-\omega)(x^k-x^0_*)^\top A_{\mathcal{S}_k}^{\dagger}A_{\mathcal{S}_k}(x^k-x^0_*)\\
			&\ \ \ +(\beta+2\beta^2)\lVert x^{k-1}-x^0_*\rVert^2_2
			+2\omega\beta\langle A_{\mathcal{S}_k}^{\dagger}A_{\mathcal{S}_k}(x^k-x^0_*),x^{k-1}-x^k\rangle\\
			&\leq
			(1+3\beta+2\beta^2)\lVert x^k-x^0_*\rVert^2_2-\omega(2-\omega+\beta)(x^k-x^0_*)^\top A_{\mathcal{S}_k}^{\dagger}A_{\mathcal{S}_k}(x^k-x^0_*)\\
			&\ \ \ +(\beta+2\beta^2)\lVert x^{k-1}-x^0_*\rVert^2_2
			+\omega\beta\lVert A_{\mathcal{S}_k}^{\dagger}A_{\mathcal{S}_k}(x^{k-1}-x^0_*)\rVert^2_2,
		\end{aligned}
	\end{equation*}
	where the last inequality follows from the convexity of $f_{\mathcal{S}_k}(x):=\frac{1}{2}\lVert A_{\mathcal{S}_k}^{\dagger}A_{\mathcal{S}_k}(x-x^0_*)\rVert^2_2$.
	By taking expectation, we have
	\begin{equation*}
		\begin{aligned}
			\mathbb{E}[\lVert x^{k+1}-x^0_*\rVert^2_2\mid x^k]
			\leq
			&(1+3\beta+2\beta^2)\lVert x^k-x^0_*\rVert^2_2
			+(\beta+2\beta^2)\lVert x^{k-1}-x^0_*\rVert^2_2\\
			&
			-\omega(2-\omega+\beta)(x^k-x^0_*)^\top \mathbb{E}[ A_{\mathcal{S}_k}^{\dagger}A_{\mathcal{S}_k}](x^k-x^0_*)\\
			& +\omega\beta(x^{k-1}-x^0_*)^\top \mathbb{E}[ A_{\mathcal{S}_k}^{\dagger}A_{\mathcal{S}_k}](x^{k-1}-x^0_*)\\
			=&(1+3\beta+2\beta^2)\lVert x^k-x^0_*\rVert^2_2
			+(\beta+2\beta^2)\lVert x^{k-1}-x^0_*\rVert^2_2\\
			&
			-\omega(2-\omega+\beta)(x^k-x^0_*)^\top A^\top H_s A(x^k-x^0_*)\\
			&+\omega\beta(x^{k-1}-x^0_*)^\top A^\top H_s A (x^{k-1}-x^0_*)\\
			\leq&
			\underbrace{\left(1+3\beta+2\beta^2-\omega(2-\omega+\beta)\rho_{A,s}\right)}_{\gamma_1}\lVert x^k-x^0_*\rVert^2_2 \\
			&+\underbrace{\left(\beta+2\beta^2+\omega\beta\frac{e_{s-1}(\lambda_{-m})}{e_{s}(\lambda)}\|A\|^2_2\right)}_{\gamma_2}\lVert x^{k-1}-x^0_*\rVert^2_2,      
		\end{aligned}
	\end{equation*}
	where the first equality follows from Lemma \ref{keylemma-exp1} and  the last inequality follows from  $x^0-x^0_*\in\operatorname{Range}(A^\top)$ and Lemma \ref{lemma-lowerbound}, and  $\frac{e_{s-1}(\lambda_{-m})}{e_{s}(\lambda)}I\succeq H_s$.
	By taking expectation again and letting $F_{k}:=\mathbb{E}[\lVert x^k-x^0_*\rVert^2_2]$, we obtain
	$$F_{k+1}\leq \gamma_1 F_k +\gamma_2F_{k-1}.$$
	The conditions of the Lemma \ref{lemma2} are satisfied. Hence by applying Lemma \ref{lemma2}, we complete the proof.
\end{proof}

\begin{remark}
	Let us compare the convergence rate obtained in Theorem \ref{thm1} and Theorem \ref{thm4}. From the definition of $\gamma_1$ and $\gamma_2$, we know that the convergence rate $q$ in Theorem \ref{thm4} can be viewed as a function of $\beta$. Since $\beta \geq 0$ and $\gamma_1+\gamma_2<1$, it implies that $\gamma_1 \gamma_2+\gamma_2^2=\gamma_2(\gamma_1+\gamma_2)\leq \gamma_2$. Therefore, $\gamma_1^2+4\gamma_2\geq(\gamma_1+2\gamma_2)^2$, and thus
	\begin{equation*}
		\begin{aligned}
			\rho(\beta)
			&\geq \gamma_1+\gamma_2 
			= 1+\left(4\beta+4\beta^2+\beta\omega\left(\frac{e_{s-1}(\lambda_{-m})}{e_{s}(\lambda)}\|A\|^2_2-\rho_{A,s}\right)\right)-\omega(2-\omega)\rho_{A,s}\\
			&\geq 1-\omega(2-\omega)\rho_{A,s} = \rho(0).
		\end{aligned}
	\end{equation*}
	Clearly the lower bound on $\rho$ is an increasing function of $\beta$, which implies that when $\omega=1$, for any $\beta$ the rate is inferior to that of Algorithm \ref{RBKVS} in Theorem \ref{thm1}.
\end{remark}

\subsection{Accelerated linear rate for expected iterates}

In this subsection, we examine the convergence property of the norm of the expected error \(\lVert \mathbb{E}[x^k - x^0_*]\rVert^2_2\). We aim to demonstrate that with an appropriate choice of the relaxation parameter \(\alpha\) and the momentum parameter \(\beta\), Algorithm \ref{RBKVSm} achieves an accelerated linear convergence rate.

\begin{theorem}
	\label{thm5}
	Suppose that the linear system \eqref{eq1} is consistent and $x^0=x^1$ are arbitrary initial vectors. Let $x^0_*=A^\dagger b+(I-A^\dagger A)x^0$,  $\lambda=(\sigma_1^2(A),\ldots,\sigma_{\min}^2(A),0,\ldots,0)^\top\in\mathbb{R}^m$, and $\rho_{A,s}$ be given by \eqref{def_HB}.
	Assume $0<\omega\leq \frac{e_{s}(\lambda)}{e_{s-1}(\lambda_{-m})\|A\|^2_2}$ and $ \left(1-\sqrt{\omega\rho_{A,s}}\right)^2 < \beta < 1$.
	Let $\{x^k\}_{k\geq 0}$ be the iteration sequence generated by Algorithm \ref{RBKVSm}. Then there exists a constant $c>0$ such that 
	\begin{equation*}
		\lVert \mathbb{E}[x^k-x_*^0]\rVert^2_2\leq \beta^k c.
	\end{equation*}
\end{theorem}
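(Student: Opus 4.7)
The plan is to take expectations on both sides of the mRBKVS update rule so that the randomness disappears and one obtains a deterministic linear two-step recurrence for $y^k:=\mathbb{E}[x^k-x^0_*]$. Because the system is consistent, $b_{\mathcal{S}_k}=A_{\mathcal{S}_k}x^0_*$, so
$\mathbb{E}[A_{\mathcal{S}_k}^\dagger(A_{\mathcal{S}_k}x^k-b_{\mathcal{S}_k})\mid x^k]=\mathbb{E}[A_{\mathcal{S}_k}^\dagger A_{\mathcal{S}_k}](x^k-x^0_*)=A^\top H_s A\,(x^k-x^0_*)$ by Lemma \ref{keylemma-exp1}. Taking a full expectation and using the tower property yields
\begin{equation*}
y^{k+1}=\bigl((1+\beta)I-\omega A^\top H_sA\bigr)y^k-\beta y^{k-1}.
\end{equation*}

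First I would rewrite this second-order recurrence as a first-order one on the augmented vector $(y^{k+1},y^k)$, with coefficient matrix
$T=\begin{pmatrix}(1+\beta)I-\omega A^\top H_sA & -\beta I\\ I & 0\end{pmatrix}$.
Since $y^0=x^0-x^0_*=A^\dagger(Ax^0-b)\in\operatorname{Range}(A^\top)$ and the recursion preserves $\operatorname{Range}(A^\top)$, it is enough to bound $T$ on this subspace. Using the eigendecomposition of the symmetric matrix $A^\top H_sA$ restricted to $\operatorname{Range}(A^\top)$, with eigenvalues $\mu_1,\dots,\mu_r$ and orthonormal eigenvectors $v_1,\dots,v_r$, the recurrence decouples: writing $y^k=\sum_i a_k^{(i)}v_i$ gives the scalar recurrence $a_{k+1}^{(i)}=(1+\beta-\omega\mu_i)a_k^{(i)}-\beta a_{k-1}^{(i)}$ with characteristic polynomial $t^2-(1+\beta-\omega\mu_i)t+\beta=0$.

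The central task is to show that, under the stated hypotheses, both roots of every such characteristic polynomial are complex conjugates of modulus exactly $\sqrt{\beta}$. This reduces to proving $(1+\beta-\omega\mu_i)^2<4\beta$, equivalently $(1-\sqrt{\beta})^2<\omega\mu_i<(1+\sqrt{\beta})^2$, for every relevant $\mu_i$. The upper bound follows because $H_s\preceq\frac{e_{s-1}(\lambda_{-m})}{e_s(\lambda)}I$ implies $\mu_i\leq\frac{e_{s-1}(\lambda_{-m})}{e_s(\lambda)}\|A\|_2^2$, so $\omega\mu_i\leq 1<(1+\sqrt{\beta})^2$ by the hypothesis on $\omega$ and $\beta>0$. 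The lower bound is the key input from volume sampling: Lemma \ref{lemma-lowerbound} gives $\mu_i\geq \rho_{A,s}$ on $\operatorname{Range}(A^\top)$, whence $\omega\mu_i\geq\omega\rho_{A,s}>(1-\sqrt{\beta})^2$ precisely when $\beta>(1-\sqrt{\omega\rho_{A,s}})^2$, which is exactly the assumption.

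Once the complex-root geometry is established, the closed-form solution of each scalar recurrence is $a_k^{(i)}=(\sqrt{\beta})^k\bigl(P_i\cos(k\theta_i)+Q_i\sin(k\theta_i)\bigr)$ for some angles $\theta_i\in(0,\pi)$ and constants $P_i,Q_i$ determined by $a_0^{(i)}=a_1^{(i)}$; in particular $|a_k^{(i)}|^2\leq C_i\,\beta^k$ for a constant $C_i$ depending only on $a_0^{(i)}$, $\mu_i$, $\omega$, $\beta$. Summing over $i$ via Parseval's identity yields $\|y^k\|_2^2=\sum_i|a_k^{(i)}|^2\leq c\,\beta^k$ with $c=\sum_iC_i$, establishing the claim. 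The hard part of the argument is the eigenvalue localization in the previous paragraph; once that is in place, the rest is standard linear-recurrence bookkeeping. One small bookkeeping issue to handle carefully is that the inequality $\omega\mu_i\leq 1$ automatically forces $1+\beta-\omega\mu_i\geq\beta>0$, so the strict discriminant inequality (and hence genuinely complex, not merely repeated, roots) survives at both ends of the spectrum.
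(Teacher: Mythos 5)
Your proposal is correct and follows essentially the same route as the paper: take expectations to obtain the deterministic two-step recurrence, diagonalize $A^\top H_s A$ via the SVD so the recurrence decouples into scalar ones, and show each characteristic polynomial has complex conjugate roots of modulus $\sqrt{\beta}$ by localizing the eigenvalues in $[\rho_{A,s},\,e_{s-1}(\lambda_{-m})\|A\|_2^2/e_s(\lambda)]$. The only cosmetic difference is that you invoke Lemma \ref{lemma-lowerbound} directly for the eigenvalue lower bound $\Lambda_{i,i}\geq\rho_{A,s}$, while the paper re-derives it from Lemma \ref{lemma-xie-1125} by a case analysis on $i\leq s$ versus $i>s$; both are valid.
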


\begin{remark}
	Note that the convergence factor specified in Theorem \ref{thm5} exactly matches the value of the momentum parameter \(\beta\). According to Theorem \ref{Theroem 2}, Algorithm \ref{RBKVS} (without momentum) achieves an iteration complexity of 
	$$O\left(\rho_{A,s}^{-1}\log(\epsilon^{-1})\right).$$ 
	However, Theorem \ref{thm5} reveals that by setting $\omega=1$ and \(\beta=\left(1-\sqrt{0.99\rho_{A,s}} \right)^2\), the iteration complexity of Algorithm \ref{RBKVSm} is significantly reduced to 
	$$O\left(\left(\sqrt{0.99\rho_{A,s}}\right)^{-1}\log(\epsilon^{-1})\right),$$
	indicating a quadratic improvement. Consequently, Theorem \ref{thm5} establishes an accelerated rate for $\lVert \mathbb{E}[x^k-x_*^0] \rVert^2_2$.
\end{remark}

The lemma presented below is crucial for proving Theorem \ref{thm5}.

\begin{lemma}[\cite{fillmore1968linear,elaydi1996introduction}]
	\label{lemma4}
	Consider the second degree linear homogeneous recurrence relation
	$$d_{k+1}=\eta_1 d_k+\eta_2 d_{k-1}$$
	with initial conditions $d_0,d_1\in\mathbb{R}$. Assume that the constant coefficients $\eta_1$ and $\eta_2$ satisfy the inequality $\eta_1^2+4\eta_2<0$. Then there are complex constants $C_0$ and $C_1$ $($depending on the initial conditions $d_0$ and $d_1$$)$ such that:
	$$d_k = 2M^k \left(C_0 \cos(\theta k) + C_1 \sin(\theta k)\right)$$
	where $M = \sqrt{-\eta_2}$ and $\theta$ satisfies that $\eta_1 = 2M\cos \theta$ and
	$\sqrt{-\eta_1^2-4\eta_2}=2M\sin\theta$.
\end{lemma}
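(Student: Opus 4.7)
The plan is to track the first moment $u^k := \mathbb{E}[x^k - x_*^0]$ and show it satisfies a scalar second-order recurrence on each eigendirection of $A^\top H_s A$, then apply Lemma \ref{lemma4} mode-by-mode. Since the system is consistent, $b_{\mathcal{S}_k} = A_{\mathcal{S}_k}x_*^0$, and the update in Step 2 of Algorithm \ref{RBKVSm} gives $x^{k+1} - x_*^0 = (I - \omega A_{\mathcal{S}_k}^{\dagger}A_{\mathcal{S}_k})(x^k - x_*^0) + \beta(x^k - x^{k-1})$. Taking conditional expectations and invoking Lemma \ref{keylemma-exp1} yields the deterministic recursion
\begin{equation*}
u^{k+1} = \bigl((1+\beta)I - \omega A^\top H_s A\bigr)\, u^k - \beta\, u^{k-1},\qquad u^0 = u^1 = x^0 - x_*^0.
\end{equation*}
As in the proof of Theorem \ref{thm1}, $u^k \in \operatorname{Range}(A^\top)$ for every $k$.

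Since $A^\top H_s A$ is symmetric positive semidefinite, I pick an orthonormal eigenbasis $\{v_i\}$ of $\operatorname{Range}(A^\top)$ with eigenvalues $\nu_i > 0$, write $u^k = \sum_i c_i^k v_i$, and set $\mu_i := \omega \nu_i$. Each coefficient satisfies the scalar recurrence $c_i^{k+1} = (1+\beta-\mu_i)\,c_i^k - \beta\, c_i^{k-1}$ with $c_i^0 = c_i^1$. Two eigenvalue bounds are now needed. Lemma \ref{lemma-lowerbound} gives $\nu_i \geq \rho_{A,s}$, hence $\mu_i \geq \omega\rho_{A,s}$; on the other hand, $H_s \preceq \frac{e_{s-1}(\lambda_{-m})}{e_s(\lambda)} I$ (already used in the proof of Theorem \ref{thm4}) combined with $\|A^\top A\|_2 = \|A\|_2^2$ and the hypothesis $\omega \leq \frac{e_s(\lambda)}{e_{s-1}(\lambda_{-m})\|A\|_2^2}$ forces $\mu_i \leq 1$.

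The characteristic equation of the scalar recurrence is $t^2 - (1+\beta-\mu_i)t + \beta = 0$, with discriminant $\Delta_i = (1+\beta-\mu_i)^2 - 4\beta$. The inequality $\Delta_i < 0$ is equivalent to $(1-\sqrt{\beta})^2 < \mu_i < (1+\sqrt{\beta})^2$. The upper bound is automatic from $\mu_i \leq 1$ and $\beta > 0$. For the lower bound, the assumption $\beta > (1-\sqrt{\omega\rho_{A,s}})^2$ combined with $\omega\rho_{A,s} \leq 1$ gives $\sqrt{\omega\rho_{A,s}} > 1 - \sqrt{\beta} \geq 0$, which upon squaring yields $(1-\sqrt{\beta})^2 < \omega\rho_{A,s} \leq \mu_i$. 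Thus Lemma \ref{lemma4} applies with $M = \sqrt{\beta}$ and produces $c_i^k = 2\beta^{k/2}\bigl(C_{0,i}\cos(\theta_i k) + C_{1,i}\sin(\theta_i k)\bigr)$, where $C_{0,i}, C_{1,i}$ are determined by $c_i^0 = c_i^1$ (the system is solvable because $\sin\theta_i \neq 0$ follows from $\Delta_i<0$).

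Bounding $|C_{0,i}\cos(\theta_i k) + C_{1,i}\sin(\theta_i k)|^2 \leq 2(|C_{0,i}|^2 + |C_{1,i}|^2)$ and summing via Parseval gives
\begin{equation*}
\|\mathbb{E}[x^k - x_*^0]\|_2^2 = \sum_i |c_i^k|^2 \leq 8\beta^k \sum_i (|C_{0,i}|^2 + |C_{1,i}|^2) =: c\,\beta^k,
\end{equation*}
with $c$ finite since the initial-condition coefficients are linear in $c_i^0 = \langle x^0-x_*^0, v_i\rangle$. The main obstacle is the discriminant check: one must confirm that the admissible window $\left(1-\sqrt{\omega\rho_{A,s}}\right)^2 < \beta < 1$ together with the ceiling on $\omega$ simultaneously keeps \emph{every} eigenvalue $\mu_i$ inside the oscillatory window $((1-\sqrt{\beta})^2, (1+\sqrt{\beta})^2)$; once this is established the oscillatory formula of Lemma \ref{lemma4} does the rest essentially automatically.
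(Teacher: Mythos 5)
Your proposal does not prove the statement in question. The statement is Lemma \ref{lemma4}: a self-contained, elementary fact about a scalar second-order linear homogeneous recurrence $d_{k+1}=\eta_1 d_k+\eta_2 d_{k-1}$ with negative discriminant, asserting the closed form $d_k = 2M^k\left(C_0\cos(\theta k)+C_1\sin(\theta k)\right)$ with $M=\sqrt{-\eta_2}$. What you have written is instead a proof of Theorem \ref{thm5} (the accelerated rate for $\lVert \mathbb{E}[x^k-x_*^0]\rVert_2^2$), and your argument explicitly invokes Lemma \ref{lemma4} as a black box to handle each eigen-coordinate. Read as a proof of Lemma \ref{lemma4}, this is circular; read as a proof of Theorem \ref{thm5}, it essentially reproduces the paper's own argument for that theorem, but that is not the statement you were asked to establish. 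Note also that the paper does not reprove Lemma \ref{lemma4}; it cites \cite{fillmore1968linear,elaydi1996introduction}.

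For completeness, the missing argument is short. The characteristic polynomial of the recurrence is $t^2-\eta_1 t-\eta_2=0$, with roots $t_{\pm}=\tfrac{1}{2}\bigl(\eta_1\pm\sqrt{\eta_1^2+4\eta_2}\bigr)$. The hypothesis $\eta_1^2+4\eta_2<0$ makes these a complex conjugate pair $t_{\pm}=\tfrac{1}{2}\bigl(\eta_1\pm i\sqrt{-\eta_1^2-4\eta_2}\bigr)$; by Vieta their product is $-\eta_2$, so $\eta_2<0$ and the common modulus is $M=\sqrt{-\eta_2}$, whence $t_{\pm}=Me^{\pm i\theta}$ with $\eta_1=2M\cos\theta$ and $\sqrt{-\eta_1^2-4\eta_2}=2M\sin\theta$ exactly as in the statement. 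Since the two roots are distinct, the general solution is $d_k=c_+t_+^k+c_-t_-^k$ with complex constants fixed by $d_0,d_1$; realness of the sequence allows $c_-=\overline{c_+}$, and writing $c_+=C_0-iC_1$ and expanding $t_{\pm}^k=M^k\bigl(\cos(\theta k)\pm i\sin(\theta k)\bigr)$ yields $d_k=2M^k\left(C_0\cos(\theta k)+C_1\sin(\theta k)\right)$. None of the algorithmic machinery in your proposal (Lemma \ref{keylemma-exp1}, the eigenvalue bounds on $A^\top H_s A$, the discriminant window for $\mu_i$) is relevant to this lemma; that material belongs to the application in Theorem \ref{thm5}.
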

%We can now obtain the theorem as follow using similar arguments as that in the proof of Theorem \ref{Theroem 2}.
\begin{proof}[Proof of Theorem \ref{thm5}]
	From the iteration scheme of Algorithm \ref{RBKVSm}, we have
	\begin{equation*}
		\begin{aligned}
			x^{k+1}-x_*^0&=(I-\omega A_{\mathcal{S}_k}^{\dagger}A_{\mathcal{S}_k})(x^k-x_*^0)+\beta(x^k-x_*^0+x_*^0-x^{k-1})\\
			&=\left((1+\beta)I-\omega A_{\mathcal{S}_k}^{\dagger}A_{\mathcal{S}_k}\right)(x^k-x_*^0)-\beta(x^{k-1}-x_*^0)
		\end{aligned}
	\end{equation*}
	Taking expectations and using Lemma \ref{keylemma-exp1}, we obtain 
	\begin{equation}
		\label{proof-xie-1125-1}
		\mathbb{E}[x^{k+1}-x_*^0]=\left((1+\beta)I-\omega A^{\top}H_s A\right) \mathbb{E}[x^k-x_*^0]-\beta \mathbb{E}[x^{k-1}-x_*^0].
	\end{equation} 
	Let $r:=\operatorname{rank}(A)$ and $A=U\Sigma V^\top$ be the singular value decomposition of $A$, then we know that $A^{\top}H_s A=V\Lambda V^\top$, where
	$$\Lambda= \operatorname{diag}\left(\frac{e_{s-1}(\lambda_{-1})\sigma_{1}^2(A)}{e_s(\lambda)},\ldots,\frac{e_{s-1}(\lambda_{-r})\sigma_{\min}^2(A)}{e_s(\lambda)},0,\ldots,0\right)\in\mathbb{R}^{n\times n}.$$
	By multiplying both sides of \eqref{proof-xie-1125-1}  by $V^\top$, we get
	$$ V^\top\mathbb{E}[x^{k+1}-x_*^0]
	=V^\top\left((1+\beta)I-\omega V\Lambda V^\top\right) \mathbb{E}[x^k-x_*^0]-\beta V^\top\mathbb{E}[x^{k-1}-x_*^0].$$
	Define $z^k:=V^\top\mathbb{E}[x^k-x_*^0]\in\mathbb{R}^n$. The above equation then simplifies to
	$$z^{k+1}=\left( (1+\beta)I-\omega \Lambda \right) z^k-\beta z^{k-1},$$
	which can be expressed coordinate-by-coordinate as
	$$z^{k+1}_i=(1+\beta-\omega \Lambda_{i,i}) z^k_i-\beta z^{k-1}_i, \ \forall \ i=1,2,\dots,n,$$
	where $z^{k}_i$ denotes the $i$-th coordinate of $z^k.$
	
	We now examine two scenarios: $1\leq i\leq r$ and $r+1\leq i\leq n$.
	
	If $r+1\leq i\leq n$, we have $\Lambda_{i,i}=0$, leading to the expression
	$$z^{k+1}_i=(1+\beta)z^k_i-\beta z^{k-1}_i.$$
	Given $x^0-x^0_*=x^1-x^0_*=A^\dagger(Ax^0-b)$, it follows that  $z^{0}_i=z^{1}_i=v_i^\top A^\dagger(Ax^0-b)=0$ since $A^\dagger=V\Sigma^\dagger U^\top$, where $v_i$ denotes the $i$-th column of $V$. Therefore, we derive
	$$z^k_i=0, \ \forall \ k\geq0.$$
	
	If $1\leq i\leq r$,  we can utilize Lemma \ref{lemma4} to establish the desired bound. For any fixed $i$, let $\eta_1:=(1+\beta)-\omega \Lambda_{i,i}$ and $\eta_2:=-\beta$. We need to ensure that $\eta_1^2+4\eta_2<0$. Indeed, since 
	$$\Lambda_{i,i}=\frac{e_{s-1}(\lambda_{-i})\sigma_{i}^2(A)}{e_s(\lambda)}\leq \frac{e_{s-1}(\lambda_{-m})\|A\|^2_2}{e_s(\lambda)},$$
	the assumption $0<\omega \leq \frac{e_{s}(\lambda)}{e_{s-1}(\lambda_{-m})\|A\|^2_2}$ in the theorem implies that $0\leq \omega\Lambda_{i,i}\leq 1$.
	From Lemma \ref{lemma-xie-1125}, we know that
	$$\Lambda_{i,i}=\frac{e_{s-1}(\lambda_{-i})\sigma_{i}^2(A)}{e_s(\lambda)}\geq \frac{\sigma_{i}^2(A)}{\sigma_{\min\{i,s\}}^2(A)+\sum_{j=s+1}^{r}\sigma_j^2(A)}.$$
	If $1\leq i\leq s$, then we have
	$$\Lambda_{i,i}\geq \frac{\sigma_{i}^2(A)}{\sigma_{i}^2(A)+\sum_{j=s+1}^{r}\sigma_j^2(A)}\geq \frac{\sigma_{s}^2(A)}{\sigma_{s}^2(A)+\sum_{j=s+1}^{r}\sigma_j^2(A)}\geq 
	\frac{\sigma_{\min}^2(A)}{\sigma_{s}^2(A)+\sum_{j=s+1}^{r}\sigma_j^2(A)} =\rho_{A,s},$$
	where the second inequality follows from the fact that  $f(x)=\frac{x}{x+\sum_{i=s+1}^{r}\sigma_i^2(A)}$ is an increasing function in $(0,+\infty)$. 
	If $s+1\leq i\leq r$, we also have
	$$\Lambda_{i,i}\geq \frac{\sigma_{i}^2(A)}{\sigma_{s}^2(A)+\sum_{j=s+1}^{r}\sigma_j^2(A)}\geq 
	\frac{\sigma_{\min}^2(A)}{\sigma_{s}^2(A)+\sum_{j=s+1}^{r}\sigma_j^2(A)} =\rho_{A,s}.$$
	Hence, for any fixed $i\in[r]$, we always have
	$\Lambda_{i,i}\geq \rho_{A,s}$.
	Thus, the assumption $ \left(1-\sqrt{\omega\rho_{A,s}}\right)^2 < \beta $ in the theorem implies that
	$$\beta>\left(1-\sqrt{\omega\rho_{A,s}}\right)^2 \geq \left(1-\sqrt{\omega\Lambda_{i,i}}\right)^2.$$
	Therefore, $\eta_1=(1+\beta)-\omega \Lambda_{i,i}\geq 0$ and we have
	$$\eta_1^2+4\eta_2=(1+\beta-\omega \Lambda_{i,i})^2-4\beta \leq (1+\beta-\omega\rho_{A,s})^2-4\beta <0$$
	as desired.
	Applying Lemma \ref{lemma4}, we know that for any $0\leq i\leq r$,
	$$z^k_i = 2(-\eta_2)^{\frac{k}{2}} \left(C_{0,i} \cos(\theta k) + C_{1,i} \sin(\theta k)\right)\leq 2\beta^{\frac{k}{2}}P_i,$$
	where $P_i$ is a constant depending on the initial conditions $C_{0,i},C_{1,i}$ (we can simply choose $P_i = \vert C_{0,i}\vert+\vert C_{1,i}\vert$).
	Combining the two cases together, for all $k\geq0$ we have
	\begin{equation*}
		\begin{aligned}
			\lVert \mathbb{E}[x^k-x_*^0] \rVert^2_2
			&=\lVert V^\top \mathbb{E}[x^k-x_*^0] \rVert^2_2=\lVert z^k \rVert^2_2
			=\sum_{1\leq i\leq r}(z^k_i)^2\leq \sum_{1\leq i\leq r}4\beta^k P_i^2
			=\beta^k c,
		\end{aligned}
	\end{equation*}
	where $c=4\sum_{1\leq i\leq r}P_i^2$.
\end{proof}

\section{Implementation of volume sampling}
\label{vs-eff}

In fact, the computational cost of volume sampling can be quite significant. Despite this, many exact and approximate methods have been proposed for efficiently generating a random variable \(\mathcal{S} \sim \operatorname{Vol}_s (AA^\top)\) \cite{kulesza2012determinantal,anari2016monte,derezinski2024solving}, making it a valuable tool in the design of randomized algorithms.
Alex and Ben \cite[Algorithm 8 and Theorem 5.2]{kulesza2012determinantal} proposed an exact method for volume sampling based on an exact SVD of the matrix $A$ and the mixture of projection DPPs.  However, obtaining an exact SVD can be expensive.

%Dereziński
Recently, Derezi\'{n}ski and Yang \cite{derezinski2024solving} demonstrated that approximate volume sampling can be achieved without the SVD of matrix \(A\), by instead utilizing the randomized Hadamard transform. Specifically, the preprocessing stage requires \(O(mn\log m)\) time, and each step takes \(O((s+\log m)\log^3 m)\) to approximately produce a \(\tilde{\mathcal{S}}\) that, with high probability, contains the desired \(\mathcal{S} \). In this section, we present a method to implement volume sampling efficiently and precisely, particularly when the matrix \(AA^\top\) is sparse, inspired by the work in \cite{rodomanov2020randomized}.

\subsection{Preprocessing step}
For the case where \(s=1\), the RBKVS method simplifies to the canonical RK method, which can be readily implemented. Let us consider the case where  $s=2$, i.e., sampling $\{i_0,j_0\}\in \binom{[m]}{s}$. In practice, we can first select one row $i_0$ and then choose the other row $j_0$ with $j_0>i_0$. That is 
\begin{equation}
	\begin{aligned}
		\label{eq-pro}
		\mathbb{P}(i=i_0,j=j_0)
		&=\mathbb{P}(i=i_0)\mathbb{P}(j=j_0\vert i=i_0)\\
		&=\frac{\sum_{j'=i_0+1}^m \operatorname{det}\left(  A_{\{i_0,j'\}}A_{\{i_0,j'\}}^\top\right)}{\sum_{1\leq i' < j'\leq m}\operatorname{det}\left( A_{\{i',j'\}}A_{\{i',j'\}}^\top\right)} 
		\cdot\frac{\operatorname{det}\left( A_{\{i_0,j_0\}}A_{\{i_0,j_0\}}^\top\right)}{\sum_{j'=i_0+1}^m \operatorname{det}\left( A_{\{i_0,j'\}}A_{\{i_0,j'\}}^\top\right)}.
	\end{aligned}
\end{equation}
Note that 
$	\operatorname{det}\left( A_{\{i,j\}}A_{\{i,j\}}^\top\right) 
=\lVert A_i\rVert_2^2 \lVert A_j\rVert_2^2-\langle A_i,A_{j}\rangle^2$. 
Then we have 
\begin{equation*}
	\begin{aligned}
		\label{eq-det}
		\sum_{j'=i+1}^j \operatorname{det}\left( A_{\{i,j'\}}A_{\{i,j'\}}^\top\right)
		&=\sum_{j'=i+1}^j \left(\lVert A_i\rVert_2^2 \lVert A_{j'}\rVert_2^2  -\langle A_i,A_{j'}\rangle^2\right)\\
		&=  \sum_{j'=i}^j \left(\lVert A_i\rVert_2^2\lVert A_{j'}\rVert_2^2  -\langle A_i,A_{j'}\rangle^2\right).
	\end{aligned}
\end{equation*}
For any $1\leq i< j\leq m$, define $\alpha(i,j):=\sum_{j'=i}^j \left(\lVert A_i\rVert_2^2\lVert A_{j'}\rVert_2^2  -\langle A_i,A_{j'}\rangle^2\right)$. Then \eqref{eq-pro} can be simplified as
\begin{equation}
	\label{eq-i,j}
	\mathbb{P}(i=i_0,j=j_0)=\frac{\alpha(i_0,m)}{\sum_{i'=1}^{m-1}\alpha(i',m)}\cdot \frac{\alpha(i_0,j_0)-\alpha(i_0,j_0-1)}{\alpha(i_0,m)},
\end{equation}
Hence, the remaining problem is how to compute $\alpha(i,j),\ 1\leq i< j\leq m$.

First, let us discuss the computational cost of \(AA^\top\).  
For a matrix \(A \in \mathbb{R}^{m \times n}\), assume that none of its rows are zero. 
Define
$$
T=\left\{(i, \ell) \mid A_{i} \circ A_{\ell} \neq 0, 1 \leq i, \ell \leq m \right\},
$$
and for any fixed $i\in[m]$,
$$
T_i=\left\{(i, \ell) \mid A_{i} \circ A_{\ell} \neq 0, 1 \leq \ell \leq m\right\},
$$
where $\circ$ denotes the Hadamard product. From this definition, we know  
$$
T=\cup_{i=1}^m T_i, \ \text{and } \ T_i \cap T_j=\emptyset, \forall i\neq j.
$$
For any $(i,\ell) \in T$, we assume that $A_{i} \circ A_{\ell}$ has $s_{i,\ell}$ nonzero entries. Therefore, the \(i\)-th row of \(A\) contains \(s_{i,i}\) nonzero entries.  
To compute  $AA^\top$, it costs
$$
\sum_{(i, \ell) \in T, i \leq \ell} (2s_{i,\ell}-1)
$$
flops. Note that if \(A\) is a sparse matrix, the cardinality of \(T\), i.e., \(|T|\), can be much smaller than \(m^2\), and in practical cases, it can sometimes be of the order \(O(m)\). Additionally, for \((i, \ell) \in T\), \(s_{i,\ell}\) is typically small, which makes the computation of \(AA^\top\) more efficient.

Next, we discuss the efficient computation of \(\alpha(i,j)\). 
For any fixed \(i \in [m]\), we define
\[
T_{\geq i}=\left\{(i, \ell) \mid A_{i} \circ A_{\ell} \neq 0, i \leq \ell \leq m\right\}.
\]
Thus, \(|T_{\geq i}|\) represents the number of nonzero elements located to the right of the diagonal in the \(i\)-th row of \(AA^\top\). We denote the corresponding column indices as \(j_{i,1}, \ldots, j_{i,|T_{\geq i}|}\), with \(i = j_{i,1}\) since \(A_i\) is nonzero, and \(i = j_{i,1} < \cdots < j_{i,|T_{\geq i}|} \leq m\). Additionally, we define \(j_{i,|T_{\geq i}|+1} = m+1\).
Therefore, for any \(j \in \{i, \ldots, m\}\), there exists a \(k\) with \(1 \leq k \leq |T_{\geq i}|\) such that \(j_{i,k} \leq j \leq j_{i,k+1}-1\). Consequently, \(\sum_{j'=i}^j\langle A_i, A_{j'}\rangle^2 = \sum_{k'=1}^k\langle A_i, A_{j_{i,k'}}\rangle^2\). We can then compute \(\alpha(i,j)\) efficiently with the following equation:
\[
\alpha(i,j)=\lVert A_i\rVert_2^2 \sum_{j'=i}^j \lVert A_{j'}\rVert_2^2-\sum_{k'=1}^k\langle A_i, A_{j_{i,k'}}\rangle^2.
\]
For any \(i \in [m]\) and \(k \in [|T_{\geq i}|]\) satisfying \(j_{i,k} \leq j \leq j_{i,k+1} - 1\), we define \(\zeta_i := \sum_{j=i}^m \lVert A_j\rVert_2^2\) and \(\phi_{i,k} := \sum_{k'=1}^k\langle A_i, A_{j_{i,k'}}\rangle^2\). Thus, we have 
\[
\alpha(i,j) = \lVert A_{i}\rVert_2^2(\zeta_i - \zeta_{j+1}) - \phi_{i,k},
\]
which indicates that our goal now is to compute \(\zeta_i\) and \(\phi_{i,k}\). Note that since we have already computed \(AA^\top\), the computational cost to obtain each \(\zeta_i\) is \(m-1\) flops, and for \(\phi_{i,k}\), it is \(\sum_{i=1}^m(2|T_{\geq i}|-1) = |T|\) flops.  

In Algorithm \ref{preprocess}, we present the preprocessing step for RBKVS with \(s=2\). Note that we also compute \(\alpha(i,j_{i,k})\) for \(i \in [m],1\leq k\leq |T_{\geq i}|\), costing \(\sum_{i=1}^m3|T_{\geq i}|=\frac{3}{2}(|T|+m)\) flops, and \(\alpha(i,m)\) for \(i \in [m]\) with their sums in this algorithm, costing \(3m-1\) flops. From the discussions above and note that 
$$\sum_{(i, \ell) \in T, i \leq \ell} (2s_{i,\ell} - 1)=2\sum_{(i, \ell) \in T, i \leq \ell} s_{i,\ell}- \sum_{i=1}^m |T_{\geq i}|=2\sum_{(i, \ell) \in T, i \leq \ell} s_{i,\ell} - \left(\frac{1}{2}|T| + \frac{1}{2}m\right),$$ we know that the total computational cost of this preprocessing step is given by
\begin{equation*}
	\begin{aligned}
		\sum_{(i, \ell) \in T, i \leq \ell} (2s_{i,\ell} - 1) + m - 1 + |T| + \frac{3}{2}(|T|+m) + 3m - 1 = 2\sum_{(i, \ell) \in T, i \leq \ell} s_{i,\ell} + 2|T| + 5m - 2
	\end{aligned}
\end{equation*}
flops.

\begin{algorithm}
	\caption{Preprocessing step for RBKVS with \(s=2\)}
	\label{preprocess}
	\begin{algorithmic}
		\Require $A\in\mathbb{R}^{m\times n}$ with $\operatorname{rank}(A)\geq2$.
		\begin{enumerate}
			\item[1:] Compute $AA^\top$.
			
			\item[2:] Compute $\zeta:=(\zeta_1,\dots,\zeta_m)^\top\in\mathbb{R}^m$.
			
			\item[3:] Compute $\Phi_i:=(\phi_{i,1},\dots,\phi_{i,|T_{\geq i}|})^\top\in\mathbb{R}^{|T\geq i|},\ i\in[m]$.
			
			\item[4:] Compute $\alpha(i,j_{i,k})$ with $\ i\in[m]$ and $1\leq k\leq |T_{\geq i}|$.
			
			\item[5:] Compute $\Psi\in\mathbb{R}^{m-1}$, where $\Psi_j:=\sum_{i=1}^j \alpha(i,m)$.
		\end{enumerate}
		\Ensure
		$\{\zeta,\Phi_1,\dots,\Phi_m,\alpha(1,j_{1,1}),\dots,\alpha(m,j_{m,|T_{\geq m}|}),\Psi\}$.
	\end{algorithmic}
\end{algorithm}

Finally, we know that for another value of \(s\),  we can apply the same approach to reduce computational costs if \(|T|\) is very small, meaning both \(A\) and \(AA^\top\) are sparse. 
For example, if \(s=3\), we can decompose the probability as \(\mathbb{P}(i=i_0,j=j_0,k=k_0)	=\mathbb{P}(i=i_0)\mathbb{P}(j=j_0\vert i=i_0)\mathbb{P}(k=k_0 \vert j=j_0,i=i_0)\).
However, it should be noted that when \(s\) becomes substantially large, the computational cost of our method may still be high. In such instances, it might be beneficial to consider the alternative techniques proposed in \cite{derezinski2024solving}.

\subsection{Volume sampling via binary search}

From Algorithm \ref{preprocess}, it is can be seen that we compute only a subset of \(\alpha(i,j)\) during the preprocessing step, as computing all \(\alpha(i,j)\) would greatly increase both computational and storage costs. Thus, after selecting \(i_0\), we initially choose a \(k_0\) to refine the search range for \(j_0\) from \(i_0+1 \leq j \leq m\) to \(j_{i_0,k_0} \leq j \leq j_{i_0,k_0+1}-1\).  Using binary search, we demonstrate the process of selecting \(\{i_0,j_0\}\) in Algorithm \ref{sample}, where Step $3$ of the algorithm is the process of selecting \(k_0\). Indeed, the computational cost of Algorithm \ref{sample} is only \(O(\log_2 m)\), which is logarithmic in dimension,  provided that the computation of \(\alpha(i,j)\)  in Step 4 remains low.

\begin{algorithm}
	\caption{Volume sampling for RBKVS with $s=2$}
	\label{sample}
	\begin{algorithmic}
		\Require $\zeta\in\mathbb{R}^m$, $\Phi_1\in\mathbb{R}^{|T\geq 1|},\dots,\Phi_m\in\mathbb{R}^{|T\geq m|}$ and $\Psi\in\mathbb{R}^{m-1}$.
		\begin{enumerate}
			\item[1:] Generate random variables $u_1, u_2\sim\mbox{U}(0,1)$ independently.
			
			\item[2:] Find $i_0:=\min\{1\leq i\leq m-1\mid u_1\leq\frac{\Psi_i}{\Psi_{m-1}}\}$ using binary search.
			
			\item[3:] Find $k_0:=\min\{1\leq k\leq |T_{\geq i_0}| \mid u_2\leq\frac{\alpha(i_0,j_{i_0,k})}{\alpha(i_0,m)}\}$ using binary search.
			
			\item[4:] Compute $\alpha(i_0,j)$ with $ j_{i_0,k_0}\leq j\leq j_{i_0,k_0+1}-1$.
			
			\item[5:] Find $j_0:=\min\{j_{i_0,k_0}\leq j\leq j_{i_0,k_0+1}-1\mid u_2\leq\frac{\alpha(i_0,j)}{\alpha(i_0,m)}\}$ using binary search.
		\end{enumerate}
		\Ensure
		$\{i_0,j_0\}$.
	\end{algorithmic}
\end{algorithm}

\subsection{When to use volume sampling}
\label{sect-when}
In practice, volume sampling can be computationally expensive, particularly due to the preprocessing step. However, if the coefficient matrix \( A \) of the linear system is fixed and only the right-hand side vector \( b \) varies, the preprocessing step (Algorithm \ref{preprocess}) needs to be performed only once. This allows the precomputed results to be reused in subsequent computations, significantly reducing the overall cost. In such cases, the initial overhead of preprocessing is justified, as it enables the efficient repeated use of Algorithm \ref{sample} for volume sampling to solve multiple linear systems with the same \( A \) but different \( b \).

\section{Numerical experiments}
In this section, we implement the proposed RBKVS (Algorithm \ref{RBKVS}) and mRBKVS (Algorithm \ref{RBKVSm}), and compare them with RK, RBK, and the generalized two-subspace randomized Kaczmarz (GTRK) method \cite{wu2022two}. 
All the methods are implemented in MATLAB R2020a for Windows 11 on a LAPTOP PC with an Intel Core i7-1165G7 @ 2.80GHz and 16 GB memory.

Let \(\varpi\) be a uniform random permutation on \([m]\). We consider the following partition for the RBK method
\[
\begin{aligned}
	\mathcal{S}_i &= \left\{\varpi(k): k=(i-1)p+1, (i-1)p+2, \ldots, ip\right\}, \quad i=1, 2, \ldots, t-1, \\
	\mathcal{S}_t &= \left\{\varpi(k): k=(t-1)p+1, (t-1)p+2, \ldots, m\right\}, \quad |\mathcal{S}_t| \leq p,
\end{aligned}
\]
where \(p\) is the block size, and we set \(p = 2\). Let  $\mathcal{S}_k=\{i_k,j_k\}$, where $$
\mathbb{P}(i_k=i)=\frac{\|A_i\|_2^2}{\|A\|^2_F} \ \ \text{and } \ \ \mathbb{P}(j_k=j)=\frac{\|A_j\|_2^2}{\|A\|^2_F-\|A_{i_k}\|_2^2}, 
$$
the  GTRK method has the  iteration scheme
$
x^{k+1}=x^k-A^\dagger_{\mathcal{S}_k }(A_{\mathcal{S}_k }x^k-b_{\mathcal{S}_k}).
$
For RBKVS and mRBKVS, we  set $s=2$.

In our implementations, to ensure the consistency of the linear system, we first generate the solution \( x^* \) and then set \( b = Ax^* \). All computations are initialized with \( x^0 = 0 \). The computations are terminated once the relative solution error (RSE), defined as 
\[
\text{RSE} = \frac{\|x^k - A^\dagger b\|_2^2}{\|x^0 - A^\dagger b\|_2^2},
\]
is less than $10^{-12}$ or the number of iterations exceeds a certain limit. Each experiment is repeated $50$ times, and we calculate the average of the results.

\subsection{Comparison between actual performance and theoretical predictions}
This subsection aims to investigate the actual performance and theoretical predictions \eqref{rate-improved} of the RBKVS method. Note that when \( s = 1 \), the RBKVS method becomes the RK method, and we will compare the performance of RBKVS (\( s = 2 \)) with that of RK.

We generate the coefficient matrix  as follows. 
Given parameters $m, n, r, \sigma_{1}, \sigma_{2}$, and $\delta$, we construct matrices \(A = U D V^\top\), where \(U \in \mathbb{R}^{m \times r}\) and \(V \in \mathbb{R}^{n \times r}\) are column-orthogonal matrices. The diagonal matrices \(D\) have entries defined as:
\[
D_{1,1} =\sigma_{1}, D_{2,2} =\sigma_{2}, \ \text{and} \ D_{i,i}=\delta, i=3,\ldots,r.
\]
Using MATLAB notation, we generate the column-orthogonal matrices with the following commands: {\tt [U,$\sim$]=qr(randn(m,r),0)} and {\tt [V,$\sim$]=qr(randn(n,r),0)}. The exact solution is generated by \(x^* = {\tt randn(n,1)}\) and then set $b=Ax^*$.  We use $x^0 = 0 \in \mathbb{R}^n$ as an initial point. 

We use ``Acc'' to represent the actual acceleration ratio of RBKVS compared to RK in terms of the number of iterations, i.e.,
\[
\operatorname{Acc}:= \frac{\text{Number of iterations of RBKVS}}{\text{Number of iterations of RK}}.
\]
In addition, we use ``PTT'' to 
express the percentage of the ``Acc'' for RBKVS accounting for the theoretical prediction
\eqref{rate-improved}, i.e.,
\[
\text{PTT}:=\frac{\operatorname{Acc}}{\rho_{A,1}/\rho_{A,2}}\times 100=\operatorname{Acc}*\frac{\sum_{i=2}^{\operatorname{rank}(A)} \sigma^2_i(A)}{\sum_{i=1}^{\operatorname{rank}(A)} \sigma^2_i(A)}\times 100.
\]

The numerical experiment results are summarized in Table \ref{table_part1_1} and Table \ref{table_part1_2}, where ``IT'' denotes the number of iterations and ``CPU'' denotes the execution time in seconds. %All computations are terminated once $\text{RSE}<10^{-12}$. 
The tables reveal that the number of iterations for RK increases considerably with rising \(\sigma_1 / \sigma_2\), whereas RBKVS shows only a slight increase. Moreover, as \(\sigma_1 / \sigma_2\) and \(n\) increase, RBKVS performs increasingly better than the RK method in terms of both the number of iterations and CPU time. Furthermore, the value in the ``PTT'' column
always  around 100, indicating the accuracy of the theoretical prediction \eqref{rate-improved}.

\begin{table}[htbp]
	\footnotesize
	\centering
	\caption{Comparison between actual performance and theoretical predictions with full rank matrices. We set $m=500$, $\sigma_{2}=10$, and  $\delta=0.1$.}
	\begin{tabular}{ccc cc cccc}
		\toprule
		\multicolumn{3}{c}{Parameters} & \multicolumn{2}{c}{RK}  & \multicolumn{4}{c}{RBKVS} \\
		\cmidrule(lr){1-3} \cmidrule(lr){4-5}  \cmidrule(lr){6-9}
		$n$ & $r$ & $\sigma_1/\sigma_2$ & IT & CPU & IT & Acc & PTT & CPU \\
		\midrule
		\multirow{3}{*}{100} 
		
		& 100 & 3 & $1.38\times 10^6$ & 1.3089 & $1.33\times 10^5$ & 10.42 & 105.07 & $4.3221$  \\
		
		& 100 & 9 & $1.13\times 10^7$ & 10.5546 & $1.49\times 10^5$ & 76.18 & 93.81 & $4.6929$  \\
		
		& 100 & 27 & $1.01\times 10^8$ & 95.2760 & $1.48\times 10^5$ & 677.35 & 93.70 & $4.7639$  \\
		
		\midrule
		\multirow{3}{*}{300}  
		&300 & 3 & $1.41\times 10^6$ & 1.7386 & $1.45\times 10^5$ & 9.72 & 99.84 & $4.6408$  \\
		
		&300 & 9 & $1.14\times 10^7$ & 14.1693 & $1.57\times 10^5$ & 72.40 & 90.90 & $5.0613$  \\
		
		&300 & 27 & $1.01\times 10^8$ & 129.8125 & $1.61\times 10^5$ &  624.70 & 88.12 & $5.2437$  \\
		
		\midrule
		\multirow{3}{*}{500} 
		&500 & 3 & $1.42\times 10^6$ & 2.2953 & $1.52\times 10^5$ & 9.38 & 97.93 & $5.0264$  \\
		
		&500  & 9 & $1.14\times 10^7$ & 18.5352 & $1.67\times 10^5$ & 68.23 & 87.30 & $5.5901$  \\
		
		&500  & 27 & $1.01\times 10^8$ & 191.4445 & $1.70\times 10^5$ &  593.55 & 85.35 & $6.0332$  \\
		
		\bottomrule
	\end{tabular}
	\label{table_part1_1}
\end{table}

\begin{table}[htbp]
	\footnotesize
	\centering
	\caption{Comparison between actual performance and theoretical predictions with rank-deficient matrices. We set $m=500$, $\sigma_{2}=10$, and  $\delta=0.1$.}
	\begin{tabular}{ccc cc cccc}
		\toprule
		\multicolumn{3}{c}{Parameters} & \multicolumn{2}{c}{RK}  & \multicolumn{4}{c}{RBKVS} \\
		\cmidrule(lr){1-3} \cmidrule(lr){4-5}  \cmidrule(lr){6-9}
		$n$ & $r$ & $\sigma_1/\sigma_2$ & IT & CPU & IT & Acc & PTT & CPU \\
		\midrule
		\multirow{3}{*}{100} 
		
		& 90 & 3 & $1.38\times 10^6$ & 1.2785 & $1.32\times 10^5$ & 10.45 & 105.37 & $4.0749$  \\
		
		& 90 & 9 & $1.14\times 10^7$ & 10.7777 & $1.47\times 10^5$ & 77.51 & 94.90 & $4.6865$  \\
		
		& 90 & 27 & $1.00\times 10^8$ & 103.0947 & $1.48\times 10^5$ & 681.96 & 94.24 & $4.7571$  \\
		
		\midrule
		\multirow{3}{*}{300}  
		&270 & 3 & $1.40\times 10^6$ & 1.7563 & $1.44\times 10^5$ & 9.75 & 99.84 & $4.7068$  \\
		
		&270 & 9 & $1.14\times 10^7$ &  14.3310 & $1.56\times 10^5$ & 72.72 & 91.03 & $5.0720$  \\
		
		&270 & 27 & $1.01\times 10^8$ & 126.9276 & $1.61\times 10^5$ & 627.61 & 88.28 & $5.1776$  \\
		
		\midrule
		\multirow{3}{*}{500} 
		&450 & 3 & $1.41\times 10^6$ & 2.2769 & $1.52\times 10^5$ & 9.30 & 96.71 & $5.0223$  \\
		
		&450  & 9 & $1.14\times 10^7$ & 18.5580 & $1.64\times 10^5$ & 69.55 & 88.57 & $5.4698$  \\
		
		&450  & 27 & $1.01\times 10^8$ & 166.4904 & $1.69\times 10^5$ &  596.69 & 85.40 & $5.6578$  \\
		
		\bottomrule
	\end{tabular}
	\label{table_part1_2}
\end{table}

\subsection{Efficiency of momentum}

This subsection investigates the effect of heavy ball momentum by comparing the mRBKVS method with its non-momentum variant, RBKVS. We consider two types of coefficient matrices:

{\bf Type I} matrices are generated based on parameters \(m, n, r, \sigma_{1}, \sigma_{2},\) and \(\delta\) as described in the previous subsection.

{\bf Type II} matrices are constructed as follows: Given $m, n, r$, and $\kappa>1$, we construct  matrix $A$ by $A=U D V^\top$, where $U \in \mathbb{R}^{m \times r}, D \in \mathbb{R}^{r \times r}$, and $V \in \mathbb{R}^{n \times r}$. Using {\sc Matlab}  notation, these matrices are generated by {\tt [U,$\sim$]=qr(randn(m,r),0)}, {\tt [V,$\sim$]=qr(randn(n,r),0)}, and {\tt D=diag(1+($\kappa$-1).*rand(r,1))}. So the condition number and the rank of $A$ are upper bounded by $\kappa$ and $r$, respectively.

In our test, the exact solution is generated by \(x^* = {\tt randn(n,1)}\) and then set $b=Ax^*$. We use $x^0 = 0 \in \mathbb{R}^n$ or $x^1 = x^0\in  \mathbb{R}^n$ as an initial point.
We set the stepsize parameter $\omega=1$ for the mRBKVS method. Figures \ref{fig:I} and \ref{fig:II} illustrate the performance of the mRBKVS method with respect to RSE across different momentum parameter values. We note that in all presented tests, the momentum parameters \(\beta\) are chosen to be nonnegative constants that do not depend on unknown parameters such as \(\sigma_{\min}(A)\) and \(\|A\|_2\). 
It can be observed from Figures \ref{fig:I} and \ref{fig:II} that mRBKVS, with appropriately chosen momentum parameters $\beta$, always
converge faster than their non-momentum variant, RBKVS.

\begin{figure}[htbp]
	\centering
	\includegraphics[width=0.32\linewidth]{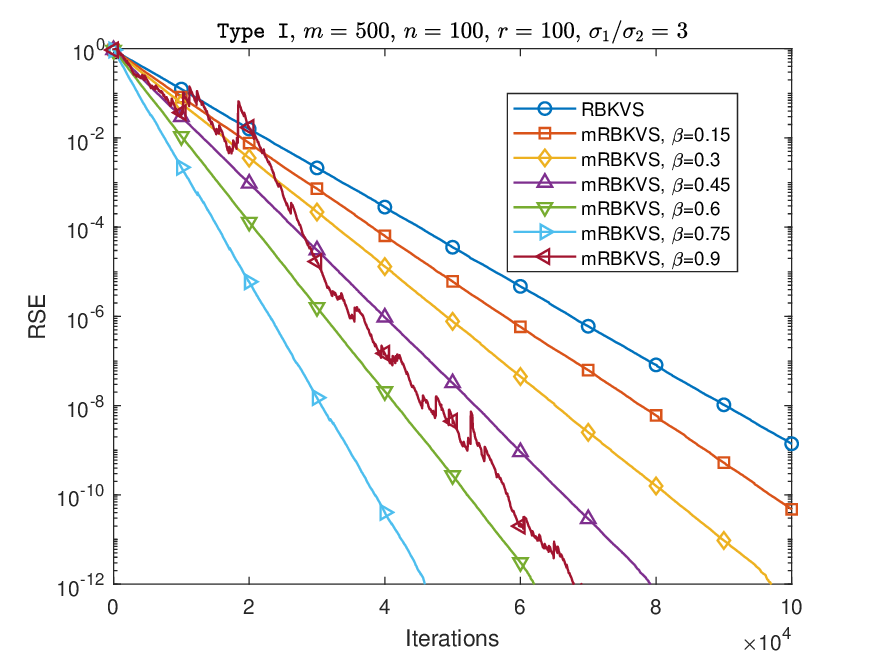}
	\includegraphics[width=0.32\linewidth]{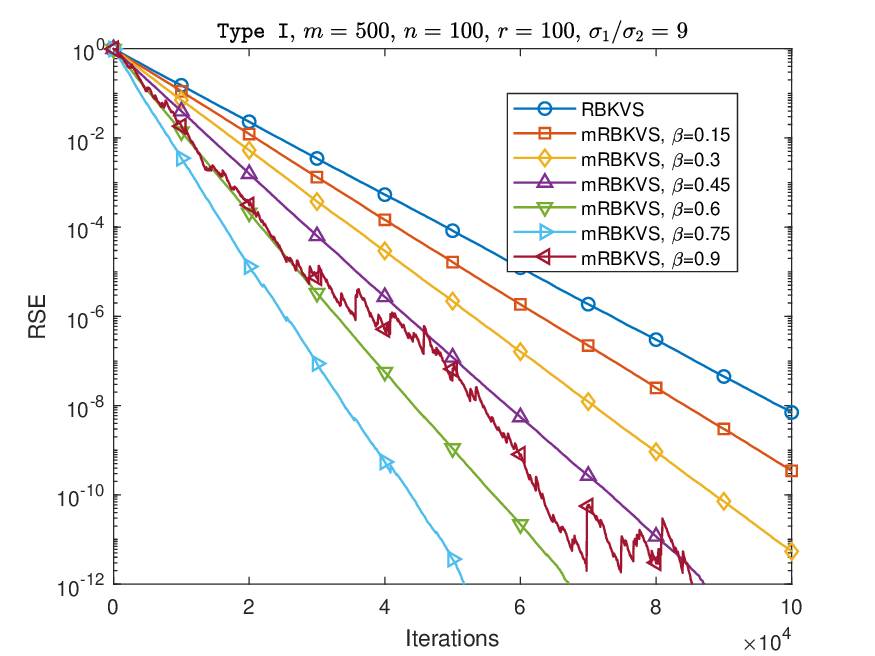}
	\includegraphics[width=0.32\linewidth]{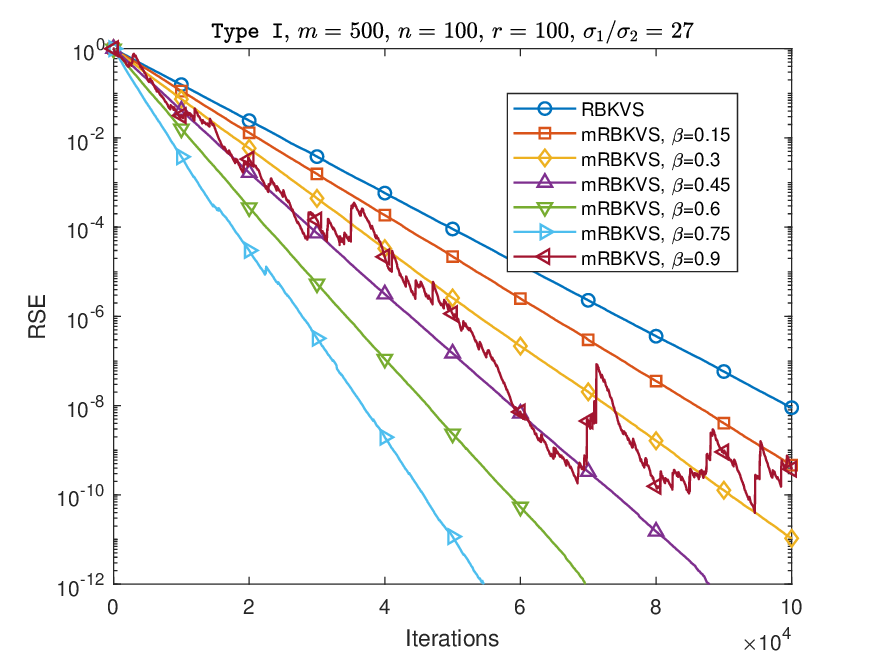}\\
	\includegraphics[width=0.32\linewidth]{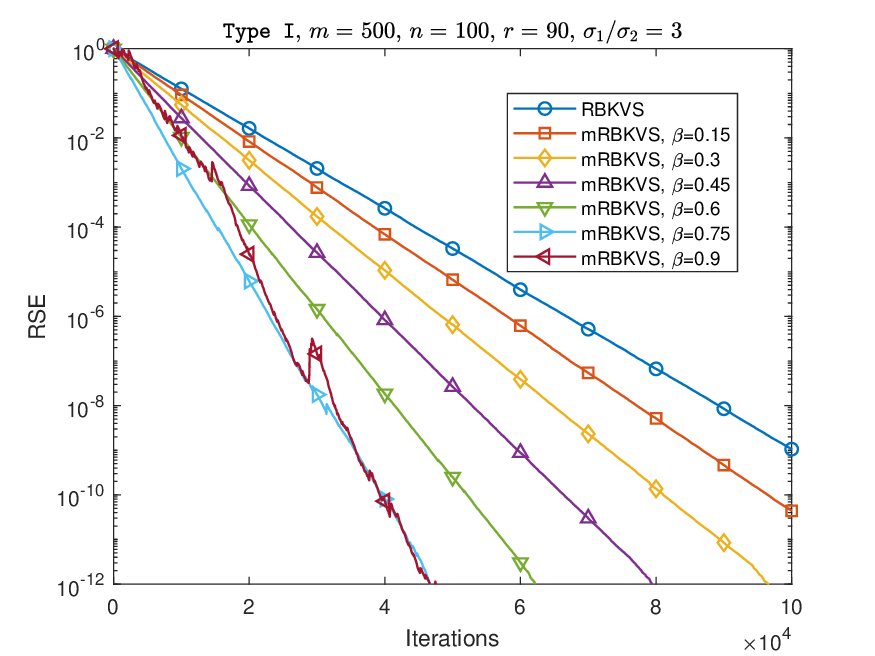}
	\includegraphics[width=0.32\linewidth]{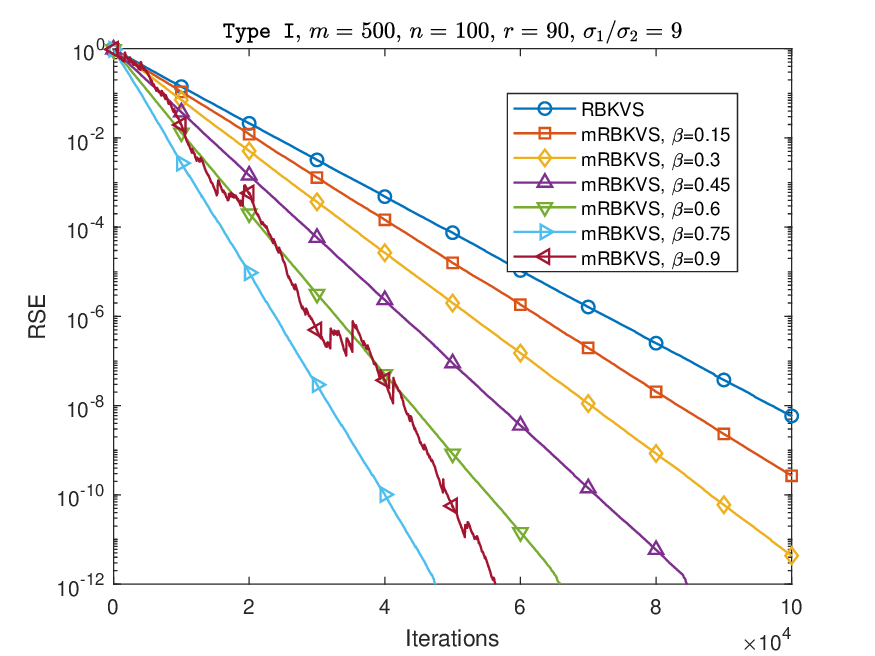}
	\includegraphics[width=0.32\linewidth]{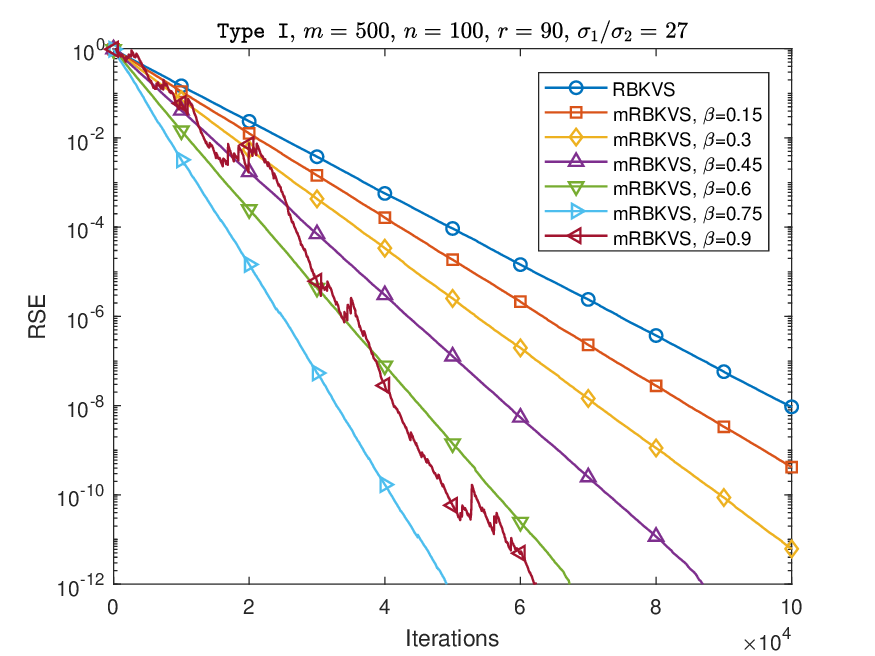}
	\caption{Performance of mRBKVS with different momentum parameters $\beta$ and  Type I coefficient matrices. We set  $\sigma_{2}=10$ and $\delta=0.1$. The title of each plot indicates the values of $m$, $n, r$, and $\sigma_{1}/\sigma_{2}$.}
	\label{fig:I}
\end{figure}

\begin{figure}[htbp]
	\centering
	\includegraphics[width=0.32\linewidth]{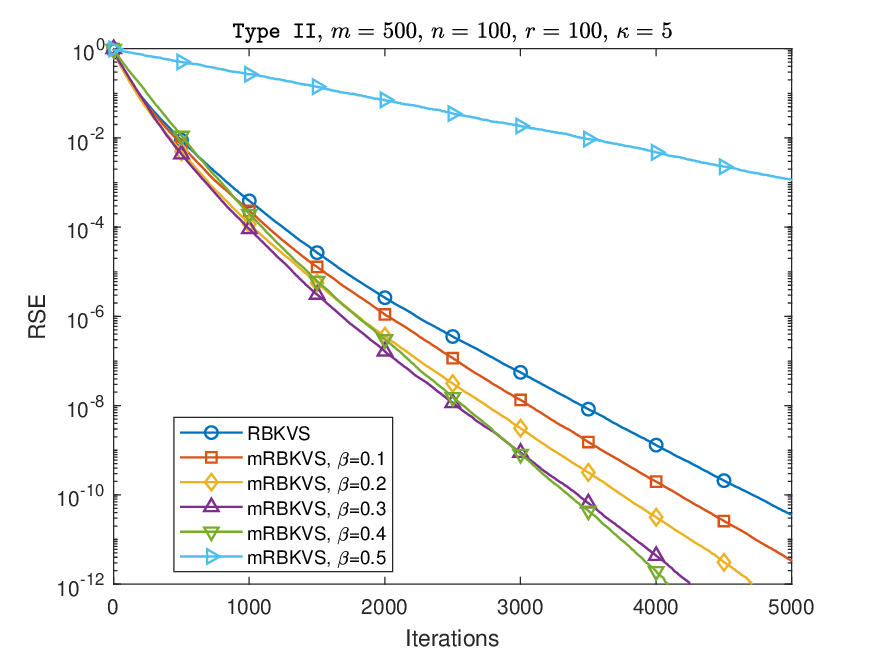}
	\includegraphics[width=0.32\linewidth]{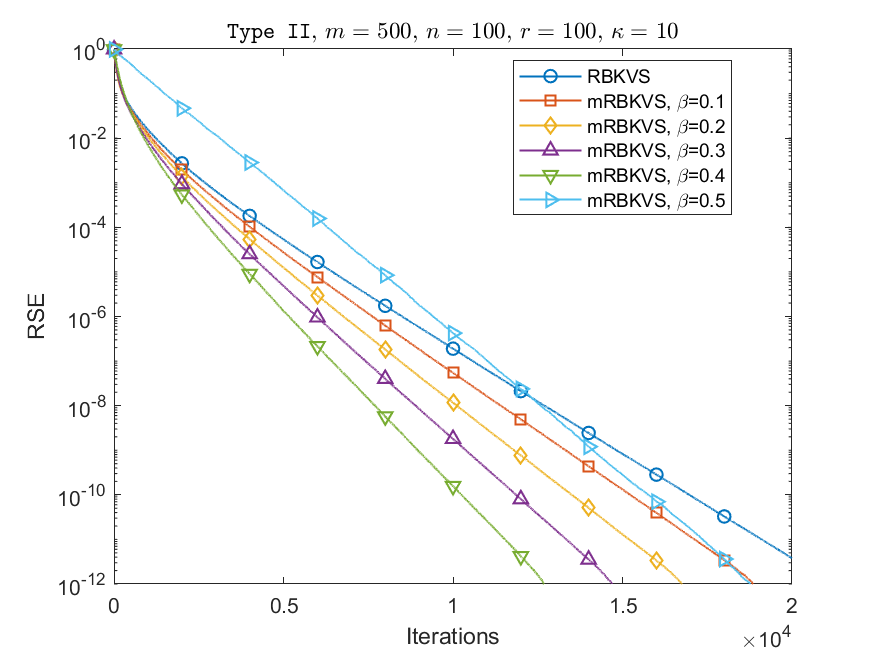}
	\includegraphics[width=0.32\linewidth]{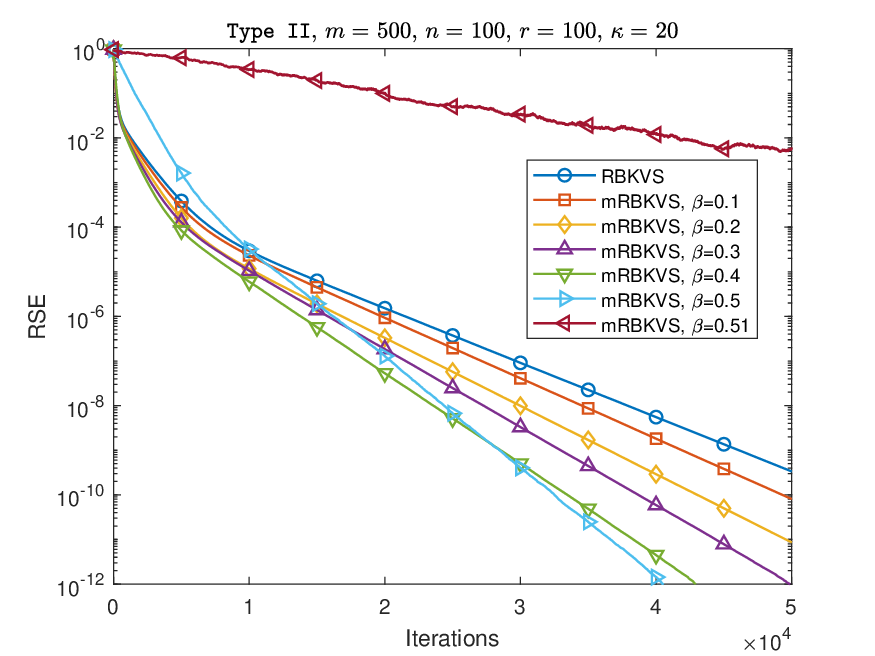}\\
	\includegraphics[width=0.32\linewidth]{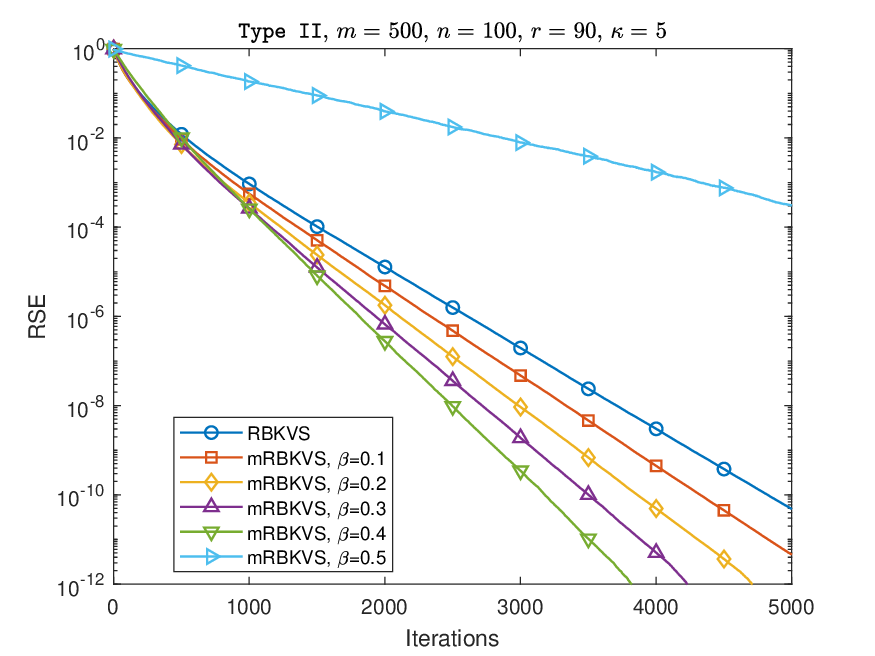}
	\includegraphics[width=0.32\linewidth]{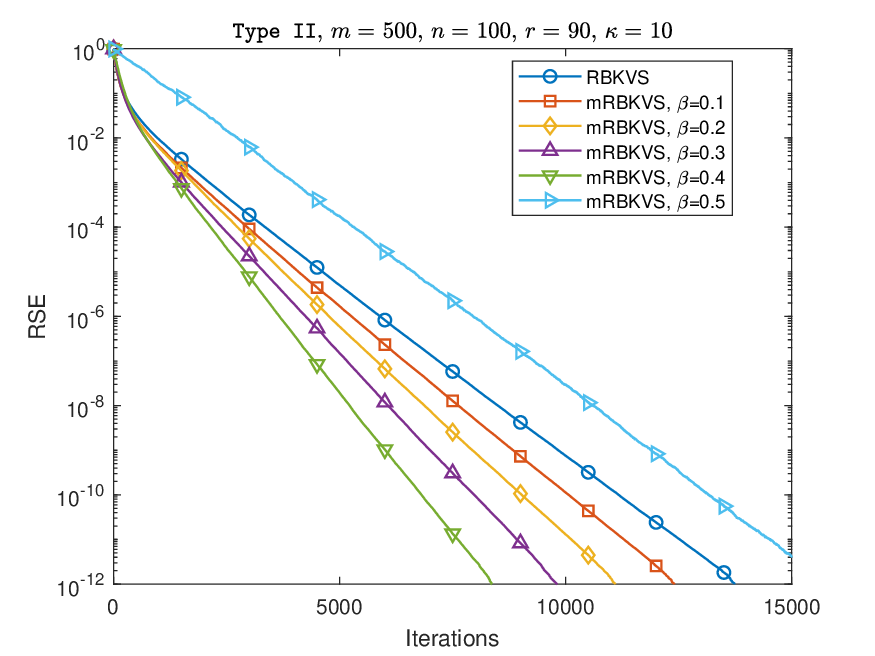}
	\includegraphics[width=0.32\linewidth]{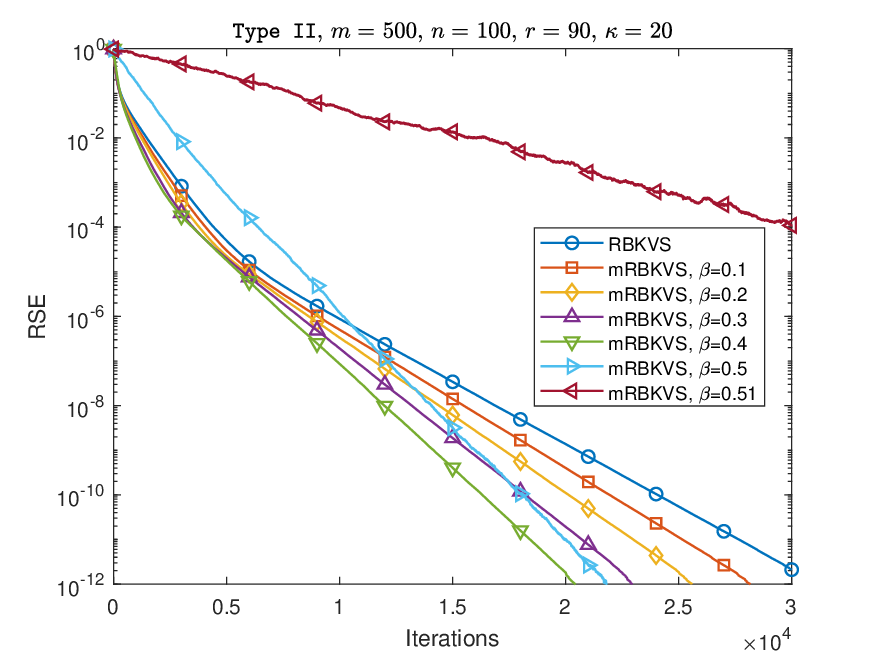}
	\caption{Performance of mRBKVS with different momentum parameters $\beta$ and  Type II coefficient matrices. The title of each plot indicates the values of $m$, $n, r$, and $\kappa$.}
	\label{fig:II}
\end{figure}

\subsection{ Real-world applications} 

In this subsection, we consider the following two types of real-world test instances: the average consensus (AC) problem and the SuiteSparse Matrix Collection  \cite{kolodziej2019suitesparse}. We also compare mRBKVS with RK, RBK, and GTRK. A characteristic of these problems is that the coefficient matrix \( A \) of the linear system is fixed, while only the right-hand side vector \( b \) varies.

\subsubsection{The AC problem}

Let \( G = (V, E) \) be an undirected connected network, where \( V = \{v_1, v_2, \dots, v_n\} \) represents the vertex set and \( E \) (\( |E| = m \)) denotes the edge set. In the average consensus (AC) problem, each vertex \( v_i \in V \) holds a private value \( c_i \in \mathbb{R} \). The objective is to compute the global average \( \bar{c} := \frac{1}{n} \sum_{i=1}^n c_i \) under the constraint that communication is restricted to neighboring vertices. This problem is foundational in distributed computing and multi-agent systems \cite{boyd2006randomized,loizou2021revisiting}, with applications spanning PageRank computation, autonomous agent coordination, and rumor propagation in social networks. Notably, under specific conditions, the randomized pairwise gossip algorithm \cite{boyd2006randomized} for solving the AC problem has been shown to be equivalent to the RK method. Further details can be found in \cite{loizou2021revisiting}.

In our experiments, we utilize two popular graph topologies from the field of wireless sensor networks: the line graph and the cycle graph. Then we consider the homogeneous linear system \( Ax = 0 \), where \( A \in \mathbb{R}^{m \times n} \) is the incidence matrix of the undirected graph.  
The initial values of the nodes are randomly generated using the MATLAB function {\tt rand(n,1)}, and the algorithms aim to compute the average of these values. We set  \( x^0 = c:=(c_1,\ldots,c_n)^\top \) or \( x^1 = x^0 = c \). Finally, we note that with a fixed vertex set, the coefficient matrix \( A \) of the linear system remains constant. This is a typical example of a linear system with a fixed coefficient matrix \( A \), so we only need to perform the preprocessing step on \( A \) at the beginning.

Figure \ref{fig:graph} summarizes the results of the experiment. It can be observed that RBK, GTRK, and RBKVS exhibit nearly similar performance. Although these methods require fewer iterations compared to RK, the CPU time remains almost the same because each step of RK involves less computational cost. The mRBKVS method outperforms all others in both the number of iterations and CPU time.

\begin{figure}[htbp]
	\centering
	\includegraphics[width=0.32\linewidth]{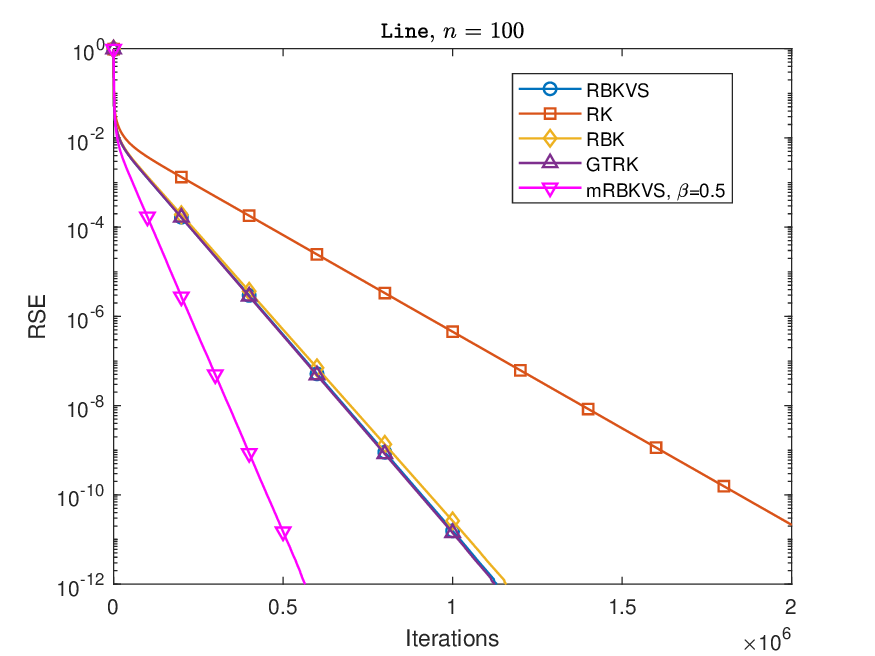}
	\includegraphics[width=0.32\linewidth]{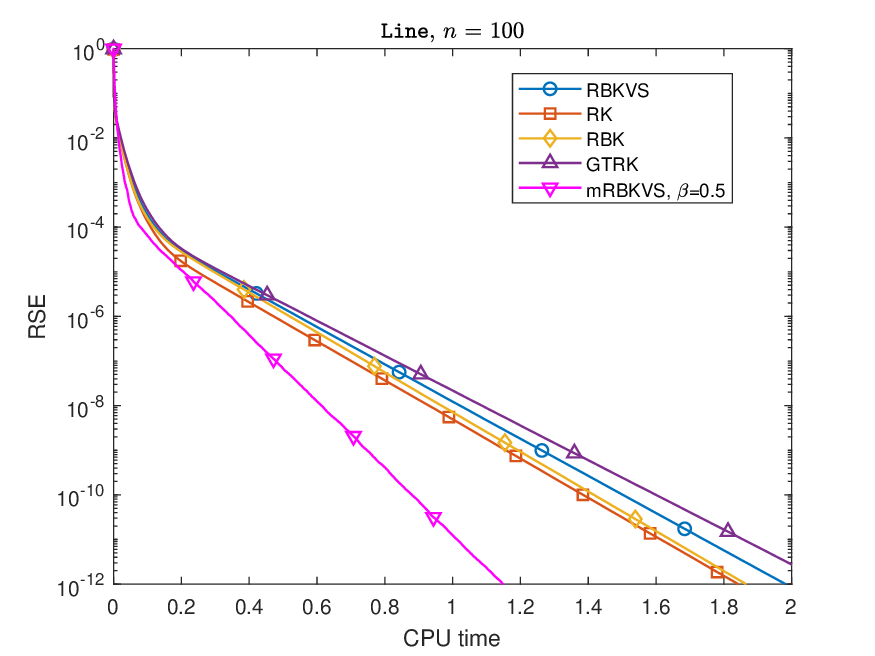}\\
	\includegraphics[width=0.32\linewidth]{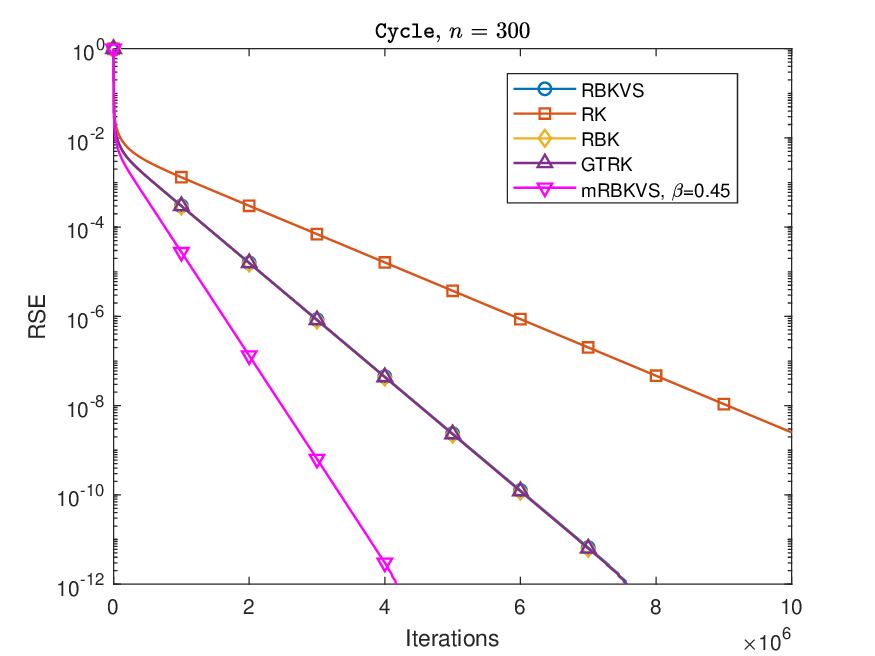}
	\includegraphics[width=0.32\linewidth]{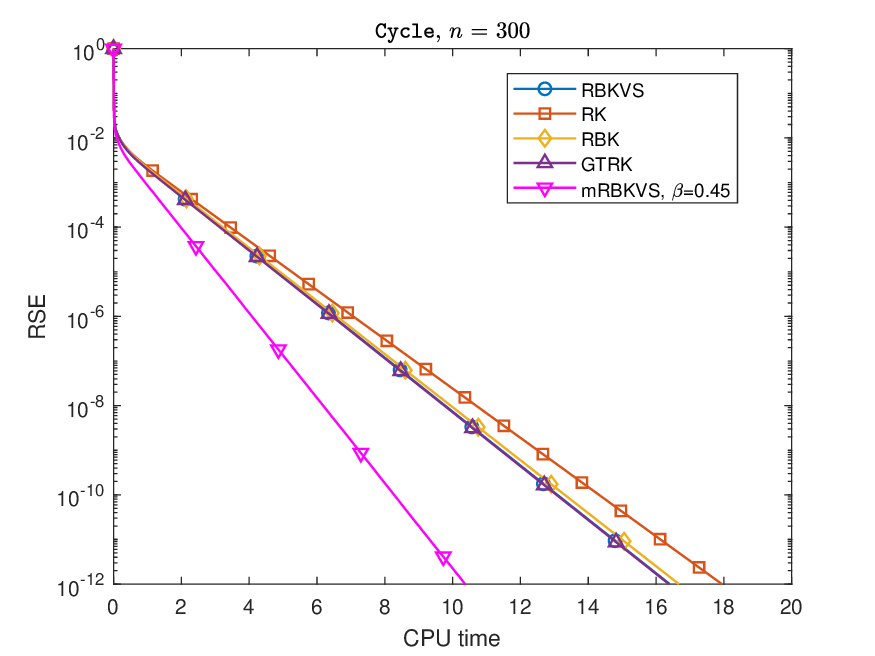}
	\caption{The evolution of RSE with respect to the number of iterations and CPU time.  The CPU times of the preprocessing step for the line graph and the cycle graph are \(0.0660\) and \(1.0923\), respectively. The title of each plot indicates the type of graph and the values of $n$.}
	\label{fig:graph}
\end{figure}

\subsubsection{The SuiteSparse Matrix Collection }

The real-world data are available via the SuiteSparse Matrix Collection \cite{kolodziej2019suitesparse}. The six matrices  
are {\tt lpi\_ex73a}, {\tt flower\_5\_1}, {\tt crew1}, {\tt bibd\_16\_8}, {\tt WorldCities}, and {\tt well1033}; see Table \ref{table_real1}. Each dataset consists of a matrix $A\in\mathbb{R}^{m\times n}$ and a vector $b\in\mathbb{R}^m$. In our experiments, we only use the matrices $A$ of the datasets and ignore the vector $b$. In particular,  the exact solution is generated by \(x^* = {\tt randn(n,1)}\) and then set $b=Ax^*$.

Table \ref{table_real} presents the numerical results, where ``Pre'' denotes the CPU time for the preprocessing step, and the appropriate momentum parameters \(\beta\) for mRBKVS are also listed.
Note that since the matrix \(A\) is provided within each dataset, we only need to perform the preprocessing step on \(A\) at the beginning.
It can be seen that the mRBKVS method outperforms all others in both the number of iterations and CPU time.

\begin{table}[htbp]
	\centering
	\footnotesize
	
	\caption{ The data sets from SuiteSparse Matrix Collection  \cite{kolodziej2019suitesparse}. }
	
	{
		\begin{tabular}{c c c c}
			\toprule
			Matrix & $m\times n$ & rank & $\frac{\sigma_{\max}(A)}{\sigma_{\min}(A)}$  \\

			\midrule
			{\tt lpi\_ex73a} & $193 \times 211$ & 188 & 4.77e16 \\
			
			{\tt flower\_5\_1} & $211 \times 201$ & 179 &  2.00e16 \\
			
			{\tt crew1} & $135 \times 6469$ & 135 &   18.20 \\
			
			{\tt bibd\_16\_8} & $120 \times 12870$ & 120 & 9.54 \\
			
			{\tt WorldCities} & $315 \times 100$ & 100 &   6.60 \\
			
			{\tt well1033} & $1033 \times 320$ & 320& 166.13 \\
			\bottomrule
		\end{tabular}
	}
	%\end{tabularx}
	\label{table_real1}
\end{table}

\begin{table}[htbp]
	\footnotesize
	\centering
	
	\caption{Performance of RK, RBK, GTRK, and mRBKVS for linear systems with coefficient matrices from the SuiteSparse Matrix Collection \cite{kolodziej2019suitesparse}. }
	
	{\scriptsize
		\begin{tabular}{c cc cc cc cccc}
			\toprule
			\multirow{2}{*}{Matrix}  & \multicolumn{2}{c}{RK} & \multicolumn{2}{c}{RBK} & \multicolumn{2}{c}{GTRK} & \multicolumn{4}{c}{mRBKVS} \\
			\cmidrule(lr){2-3} \cmidrule(lr){4-5} \cmidrule(lr){6-7} \cmidrule(lr){8-11}
			&  IT & CPU & IT & CPU & IT & CPU & Pre &$\beta$ & IT & CPU \\
			\midrule
			{\tt lpi\_ex73a}  & $1.19\text{e}6$ &  6.6634 & $5.97\text{e}5$  & 10.9108 & $5.91\text{e}5$ & 10.7430 &  0.3221 & 0.50 & $2.95\text{e}5$ & \textbf{5.4576}\\
			
			{\tt flower\_5\_1}  & $1.03\text{e}5$ & 0.6295 & $4.20\text{e}4$  & 0.7812 & $5.16\text{e}4$ & 0.9477 & 0.4062 & 0.45 & $2.80\text{e}4$ & \textbf{0.5272}\\
			
			{\tt crew1}   & $3.39\text{e}4$ & 3.9654 & $1.55\text{e}4$  & 3.6824 & $1.70\text{e}4$ & 4.1246 & 1.8475 & 0.40 & $1.05\text{e}4$ & \textbf{2.5441}\\
			
			{\tt bibd\_16\_8}  & $6.58\text{e}3$ & 2.2510 &  $3.78\text{e}3$ & 2.5061 & $3.11\text{e}3$ &  2.0814 & 3.0306 & 0.25 & $2.90\text{e}3$ & \textbf{1.9807}        \\

			{\tt WorldCities} & $8.87\text{e}4$ & 0.7327 &  $2.00\text{e}8$ & $8.6040\text{e}3$ & $3.58\text{e}4$ & 0.8464 & 1.3506 & 0.50 & $1.62\text{e}4$ & \textbf{0.3884}        \\
			
			{\tt well1033}  & $3.02\text{e}7$ & 275.6034 &  $1.02\text{e}7$ & 226.7555 & $1.44\text{e}7$ & 336.4390 & 516.1764 & 0.50 & $7.70\text{e}6$ & \textbf{200.4543}        \\
			\bottomrule
		\end{tabular}
	}
	%\end{tabularx}
	\label{table_real}
\end{table}

\section{Concluding remarks}

In this work, we studied the RBK method enhanced with volume sampling and heavy ball momentum for solving linear systems. We proved global linear convergence rates for the method and demonstrated an accelerated linear rate in terms of the norm of the expected error. Our theoretical findings were corroborated through extensive experimental testing, confirming the superior performance of the mRBKVS method.

There are still many possible future venues of research. The randomized extended Kaczmarz method \cite{Du20Ran,Zou12,Du19} is a highly effective approach for solving inconsistent linear systems. Investigating how to leverage RBKVS to design a corresponding randomized extended block Kaczmarz method is a worthwhile research direction. Recent studies \cite{rieger2023generalized, hegland2023generalized} have shown that RK-type methods can be accelerated by the Gearhart-Koshy acceleration \cite{gearhart1989acceleration, tam2021gearhart}.
Exploring extensions of Gearhart-Koshy acceleration for the RBKVS method could also be a valuable research topic.

		\bibliographystyle{plain}   %选择引用样式     
		\bibliography{bib}
		
\section{Appendix. Proof of the key lemmas}
\label{sec:appd}

\subsection{Proof of Lemmas \ref{keylemma-exp1} and \ref{keylemma-exp2}}		

To establish Lemmas \ref{keylemma-exp1} and \ref{keylemma-exp2}, we first introduce some useful lemmas.

\begin{lemma}[\cite{rodomanov2020randomized}, Lemma 3.3]
	\label{lemmaH}
	Under the same notations of Lemma \ref{keylemma-exp1}, we have
	\[
	\sum_{\mathcal{S} \in \binom{[m]}{s}} I_{\mathcal{S} }^\top \operatorname{Adj}(A_{\mathcal{S} } A_{\mathcal{S} }^\top) I_{\mathcal{S} } = U \operatorname{diag}\left(e_{s-1}(\lambda_{-1}), \ldots, e_{s-1}(\lambda_{-m})\right) U^\top.
	\]
\end{lemma}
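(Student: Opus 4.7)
\medskip

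\noindent\textbf{Proof proposal.} The plan is to deduce the matrix identity by introducing a test vector $x\in\mathbb{R}^m$, writing two different expansions of $\det(tI_m + AA^\top + xx^\top)$ as a polynomial in $t$, and matching coefficients. The starting point is the classical expansion of the characteristic polynomial in terms of principal minors: for any $m\times m$ matrix $C$,
\[
\det(tI_m + C) \;=\; \sum_{k=0}^{m} t^{m-k} \sum_{\mathcal{S}\in\binom{[m]}{k}} \det\!\big(I_{\mathcal{S}} C I_{\mathcal{S}}^\top\big),
\]
together with the matrix determinant lemma in the symmetric form $\det(N + uu^\top) = \det(N) + u^\top \operatorname{Adj}(N)\, u$, which holds for any square $N$ and vector $u$.

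Applying the expansion above to $C = AA^\top + xx^\top$, using that $I_{\mathcal{S}}(AA^\top+xx^\top)I_{\mathcal{S}}^\top = A_{\mathcal{S}}A_{\mathcal{S}}^\top + x_{\mathcal{S}}x_{\mathcal{S}}^\top$ where $x_{\mathcal{S}} = I_{\mathcal{S}}x$, and then invoking the matrix determinant lemma block-by-block, I obtain
\[
\det(tI_m + AA^\top + xx^\top) - \det(tI_m + AA^\top) \;=\; \sum_{k=1}^{m} t^{m-k}\, x^\top L_k\, x,
\qquad L_k := \sum_{\mathcal{S}\in\binom{[m]}{k}} I_{\mathcal{S}}^\top \operatorname{Adj}(A_{\mathcal{S}}A_{\mathcal{S}}^\top) I_{\mathcal{S}}.
\]
Thus the matrix $L_s$ appearing in the lemma is isolated as the coefficient of $t^{m-s}$ in an explicit quadratic form in $x$.

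On the other hand, using the eigendecomposition $AA^\top = U\Lambda U^\top$ with $\Lambda = \operatorname{diag}(\lambda_1,\ldots,\lambda_m)$ and the matrix determinant lemma applied directly to the full $m\times m$ matrix $tI_m + AA^\top$, the same difference equals
\[
\det(tI_m + AA^\top)\cdot x^\top (tI_m + AA^\top)^{-1} x \;=\; \sum_{j=1}^{m} y_j^2 \prod_{i\neq j}(t+\lambda_i),
\]
where $y := U^\top x$. Expanding $\prod_{i\neq j}(t+\lambda_i) = \sum_{\ell=0}^{m-1} t^{m-1-\ell}\, e_\ell(\lambda_{-j})$ and comparing the coefficient of $t^{m-s}$ in the two expressions gives $x^\top L_s x = \sum_{j} e_{s-1}(\lambda_{-j}) y_j^2 = x^\top U\operatorname{diag}(e_{s-1}(\lambda_{-1}),\ldots,e_{s-1}(\lambda_{-m}))U^\top x$ for every $x\in\mathbb{R}^m$, which gives the desired identity by symmetry of both sides.

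The main point requiring care is that the second expansion uses $(tI_m + AA^\top)^{-1}$, which is a priori valid only for $t\notin\{-\lambda_1,\ldots,-\lambda_m\}$; however, both sides of the asserted polynomial identity in $t$ have degree $m$, so the equality at infinitely many values of $t$ extends to a formal polynomial identity, whence coefficient matching is justified. The remaining verifications are routine: the matrix determinant lemma used on principal submatrices $A_{\mathcal{S}}A_{\mathcal{S}}^\top$ is applicable even when those submatrices are singular (the identity is polynomial in their entries), and the characteristic-polynomial expansion via principal minors is standard linear algebra.
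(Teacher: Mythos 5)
The paper does not actually prove this lemma: it is imported verbatim from Rodomanov--Kropotov (their Lemma 3.3), so there is no in-paper argument to compare against. Your proof is a correct, self-contained derivation, and every step checks out: the principal-minor expansion $\det(tI_m+C)=\sum_{k=0}^m t^{m-k}\sum_{|\mathcal{S}|=k}\det(I_{\mathcal{S}}CI_{\mathcal{S}}^\top)$ is standard; the rank-one identity $\det(N+uu^\top)=\det(N)+u^\top\operatorname{Adj}(N)u$ is exact (not merely first order) and, being polynomial in the entries of $N$, needs no invertibility of the principal blocks $A_{\mathcal{S}}A_{\mathcal{S}}^\top$; the second expansion via $\operatorname{Adj}(tI_m+AA^\top)=U\operatorname{diag}\bigl(\prod_{i\neq j}(t+\lambda_i)\bigr)U^\top$ and $\prod_{i\neq j}(t+\lambda_i)=\sum_{\ell}t^{m-1-\ell}e_\ell(\lambda_{-j})$ is right; and since both $\sum_{\mathcal{S}}I_{\mathcal{S}}^\top\operatorname{Adj}(A_{\mathcal{S}}A_{\mathcal{S}}^\top)I_{\mathcal{S}}$ and $U\operatorname{diag}(e_{s-1}(\lambda_{-j}))U^\top$ are symmetric, agreement of the quadratic forms for all $x$ yields equality of the matrices. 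Your argument is in effect a clean, derivative-free version of the route Rodomanov and Kropotov take (they differentiate the sum-of-principal-minors identity $\sum_{|\mathcal{S}|=s}\det(B_{\mathcal{S}\mathcal{S}})=e_s(\lambda(B))$ with respect to $B$, using that $I_{\mathcal{S}}^\top\operatorname{Adj}(B_{\mathcal{S}\mathcal{S}})I_{\mathcal{S}}$ is the gradient of $\det(B_{\mathcal{S}\mathcal{S}})$); your rank-one test perturbation $B\mapsto B+xx^\top$ achieves the same isolation of the adjugate sums without any matrix-calculus bookkeeping, at the cost of the (correctly handled) detour through $t$ outside the spectrum. As a bonus, your first expansion re-derives the paper's Lemma \ref{lemma-spm} as the zeroth-order term, so the two auxiliary facts the paper cites separately come out of a single identity.
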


The following result, which can be derived from the well-known Cauchy-Binet identity, provides a concise expression for the sum of principal minors.
\begin{lemma}[\cite{bernstein2009matrix}, Facts 7.5.17, (\romannumeral18)]
	\label{lemma-spm}
	Under the same notations of Lemma \ref{keylemma-exp1}, for any integer $s$ such that $1\leq s \leq m$, we have
	$$\sum_{\mathcal{S}\in\binom{[m]}{s}}\operatorname{det}(A_{\mathcal{S} } A_{\mathcal{S} }^\top)=e_{s}(\lambda).$$
\end{lemma}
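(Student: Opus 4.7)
The plan is to deduce the identity from the classical relationship between principal minors of a matrix and the elementary symmetric polynomials of its eigenvalues. The hint in the paper points to Cauchy–Binet, and indeed two routes are natural; I would use the characteristic-polynomial route because it directly converts sums of principal minors into $e_s$ of the eigenvalues, which is exactly the right-hand side we want.

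First I would recall the standard fact that for any square matrix $M \in \mathbb{R}^{m \times m}$ with eigenvalues $\mu_1,\ldots,\mu_m$ (counted with multiplicity), the characteristic polynomial admits the two expansions
\[
\det(tI - M) \;=\; \prod_{i=1}^{m}(t-\mu_i) \;=\; \sum_{s=0}^{m} (-1)^{s} e_s(\mu)\, t^{m-s},
\]
and, on the other hand,
\[
\det(tI - M) \;=\; \sum_{s=0}^{m} (-1)^{s} \Bigl(\sum_{\mathcal{S}\in\binom{[m]}{s}} \det(M_{\mathcal{S},\mathcal{S}})\Bigr) t^{m-s},
\]
where $M_{\mathcal{S},\mathcal{S}}$ denotes the principal $s\times s$ submatrix of $M$ indexed by $\mathcal{S}$. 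Matching coefficients of $t^{m-s}$ yields
\[
\sum_{\mathcal{S}\in\binom{[m]}{s}} \det(M_{\mathcal{S},\mathcal{S}}) \;=\; e_s(\mu).
\]

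Next I would specialize to $M = AA^\top$. The key observation, which I would state explicitly, is that the principal submatrix $(AA^\top)_{\mathcal{S},\mathcal{S}}$ equals $A_{\mathcal{S}} A_{\mathcal{S}}^\top$, because selecting rows $\mathcal{S}$ of $A$ and then multiplying by its transpose is the same as selecting the rows and columns $\mathcal{S}$ from $AA^\top$. Together with the SVD $A = U\Sigma V^\top$, the eigenvalues of $AA^\top$ are precisely $\sigma_1^2(A),\ldots,\sigma_{\operatorname{rank}(A)}^2(A),0,\ldots,0$, i.e., the vector $\lambda$ defined in Lemma \ref{keylemma-exp1}. Substituting this into the identity above produces the claim.

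There is no real obstacle; the only subtlety is verifying that the principal-minor/coefficient identity holds for all $s$ in the advertised range $1 \leq s \leq m$ (which is immediate from the two polynomial expansions above) and observing that, even when $s > \operatorname{rank}(A)$, the formula stays consistent because $e_s(\lambda)=0$ and every $\det(A_{\mathcal{S}} A_{\mathcal{S}}^\top)=0$ as well (the submatrix $A_{\mathcal{S}}$ is rank-deficient). Alternatively, one could instead apply Cauchy–Binet twice, first writing $\det(A_{\mathcal{S}}A_{\mathcal{S}}^\top) = \sum_{\mathcal{T}\in\binom{[n]}{s}} \det(A_{\mathcal{S},\mathcal{T}})^2$ and then recognizing $\sum_{\mathcal{S},\mathcal{T}} \det(A_{\mathcal{S},\mathcal{T}})^2$ as the $s$th power-sum of squared singular values via the SVD, but the principal-minor argument above is more direct.
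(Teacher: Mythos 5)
Your proof is correct. Note that the paper does not actually prove this lemma at all: it is quoted verbatim from Bernstein's \emph{Matrix Mathematics} (Fact 7.5.17) with only the remark that it "can be derived from the well-known Cauchy--Binet identity," so there is no in-paper argument to compare against. Your characteristic-polynomial route is the standard self-contained derivation: matching the coefficient of $t^{m-s}$ in the two expansions of $\det(tI-M)$ gives $\sum_{\mathcal{S}}\det(M_{\mathcal{S},\mathcal{S}})=e_s(\mu)$, and the observation that $(AA^\top)_{\mathcal{S},\mathcal{S}}=A_{\mathcal{S}}A_{\mathcal{S}}^\top$ together with the fact that the eigenvalues of $AA^\top$ are the squared singular values padded with zeros finishes the job. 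Your sanity check for $s>\operatorname{rank}(A)$ (both sides vanish) is also right, and your sketched alternative via two applications of Cauchy--Binet is essentially the route the paper's citation gestures at; the two arguments buy the same thing, with yours avoiding any explicit minor-summation bookkeeping over column subsets. The one thing worth being aware of is that the "sum of principal minors" expansion of the characteristic polynomial is itself a nontrivial classical fact, so if you wanted a fully from-scratch proof you would still need to justify that step (e.g., by multilinearity of the determinant in its columns applied to $tI-M$), but citing it as standard is entirely reasonable here.
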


\begin{lemma}
	\label{lemma0}
	For any matrix $B\in\mathbb{R}^{\ell \times n}$, the matrix $\operatorname{Adj}(BB^\top)$ is positive semidefinite. Moreover, $\operatorname{rank}(B)<\ell$ if and only if $B^\top\operatorname{Adj}(BB^\top)B=0$. 
\end{lemma}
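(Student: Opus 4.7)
The plan is to handle the two claims separately, proving the positive semidefiniteness by spectral analysis and the rank equivalence by combining the classical adjugate identity $\operatorname{Adj}(M)M=\det(M)I$ with the fact that $\operatorname{range}(B)=\operatorname{range}(BB^\top)$ for real matrices.

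For the positive semidefiniteness, I would diagonalize the symmetric PSD matrix $BB^\top=Q\Lambda Q^\top$ with $\Lambda=\operatorname{diag}(\mu_1,\ldots,\mu_\ell)$ and $\mu_i\geq 0$. Since the adjugate is equivariant under orthogonal conjugation, one has $\operatorname{Adj}(BB^\top)=Q\operatorname{Adj}(\Lambda)Q^\top$, and $\operatorname{Adj}(\Lambda)=\operatorname{diag}\!\bigl(\prod_{j\neq i}\mu_j\bigr)_{i=1}^{\ell}$. Each diagonal entry is a product of nonnegative numbers, hence nonnegative, so $\operatorname{Adj}(BB^\top)$ is PSD. (Alternatively, one can perturb $BB^\top$ by $\varepsilon I$, use $\operatorname{Adj}(BB^\top+\varepsilon I)=\det(BB^\top+\varepsilon I)(BB^\top+\varepsilon I)^{-1}\succeq 0$, and let $\varepsilon\to 0^+$.)

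For the equivalence, I would split into the two rank cases. If $\operatorname{rank}(B)=\ell$, then $BB^\top$ is invertible with $\det(BB^\top)>0$, so $\operatorname{Adj}(BB^\top)=\det(BB^\top)(BB^\top)^{-1}\succ 0$. Then
\[
B^\top\operatorname{Adj}(BB^\top)B=\det(BB^\top)\,B^\top(BB^\top)^{-1}B,
\]
and $B^\top(BB^\top)^{-1}B$ is the orthogonal projector onto $\operatorname{range}(B^\top)$, which has dimension $\ell\geq 1$; in particular it is nonzero, so $B^\top\operatorname{Adj}(BB^\top)B\neq 0$. Conversely, if $\operatorname{rank}(B)<\ell$, then $\det(BB^\top)=0$, and the universal identity $\operatorname{Adj}(M)M=\det(M)I$ gives $\operatorname{Adj}(BB^\top)\,BB^\top=0$. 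Thus every column of $BB^\top$ lies in $\ker(\operatorname{Adj}(BB^\top))$; using $\operatorname{range}(B)=\operatorname{range}(BB^\top)$ (which follows from $\ker(BB^\top)=\ker(B^\top)$ via $\|B^\top x\|_2^2=x^\top BB^\top x$), every column of $B$ does as well. Therefore $\operatorname{Adj}(BB^\top)B=0$ and a fortiori $B^\top\operatorname{Adj}(BB^\top)B=0$.

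The only mildly delicate point is the case $\operatorname{rank}(B)=\ell-1$, where $\operatorname{Adj}(BB^\top)$ is a nonzero rank-one PSD matrix, so it is not immediately obvious that $B^\top\operatorname{Adj}(BB^\top)B$ vanishes. The range-equality trick above resolves this cleanly, since it converts the cofactor identity $\operatorname{Adj}(BB^\top)\,BB^\top=0$ into the stronger statement $\operatorname{Adj}(BB^\top)B=0$. Once this is in hand, both directions of the equivalence follow with no further work.
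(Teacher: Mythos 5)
Your proposal is correct. The positive semidefiniteness argument is essentially identical to the paper's: both diagonalize $BB^\top$ (the paper via the SVD $B=U\Sigma V^\top$, you via the spectral decomposition $Q\Lambda Q^\top$, which is the same thing), push the adjugate through the orthogonal conjugation, and observe that the adjugate of a PSD diagonal matrix has nonnegative diagonal entries $\prod_{j\neq i}\mu_j$. Where you diverge is the rank equivalence. The paper stays inside the SVD and computes $B^\top\operatorname{Adj}(BB^\top)B=V\bigl(\Sigma^\top\operatorname{Adj}(\Sigma\Sigma^\top)\Sigma\bigr)V^\top$, reading off both directions at once from the diagonal entries $\sigma_i^2(B)\prod_{j\neq i}\sigma_j^2(B)$, which all vanish precisely when some singular value is zero. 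You instead split into cases: for full rank you use $\operatorname{Adj}(M)=\det(M)M^{-1}$ and the nonvanishing of the orthogonal projector $B^\top(BB^\top)^{-1}B$; for deficient rank you combine the universal identity $\operatorname{Adj}(M)M=\det(M)I$ with $\operatorname{range}(B)=\operatorname{range}(BB^\top)$ to upgrade $\operatorname{Adj}(BB^\top)BB^\top=0$ to $\operatorname{Adj}(BB^\top)B=0$. Both routes are valid; the paper's is more uniform (one computation settles the equivalence), while yours is coordinate-free, correctly isolates the only delicate case ($\operatorname{rank}(B)=\ell-1$, where the adjugate is nonzero), and yields the slightly stronger conclusion $\operatorname{Adj}(BB^\top)B=0$ in the rank-deficient case, which is in fact the form used downstream in the proof of Lemma \ref{keylemma-exp1}.
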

\begin{proof}
	Let $B=U\Sigma V^\top$ be the singular value decomposition of $B$. Note that for any square matrices $P$ and $Q$,  $\operatorname{Adj}(PQ)=\operatorname{Adj}(Q)\operatorname{Adj}(P)$. Thus, we obtain
	$$\operatorname{Adj}(BB^\top)=\operatorname{Adj}(U \Sigma\Sigma^\top U^\top)=\operatorname{Adj}(U^\top)\operatorname{Adj}(\Sigma\Sigma^\top )\operatorname{Adj}(U)=U\operatorname{Adj}(\Sigma\Sigma^\top )U^\top,$$
	where the last equality follows from $\operatorname{Adj}(U)=\operatorname{det}(U)U^{-1}=\operatorname{det}(U)U^\top$ 
	and $\operatorname{det}^2(U)=1$ as $U$ is an orthogonal matrix.

	Given that $\Sigma\Sigma^\top=\operatorname{diag}(\sigma^2_1(B),\dots,\sigma^2_\ell (B))\in\mathbb{R}^{\ell\times \ell}$, we know from the definition of the adjugate matrix that 
	$\operatorname{Adj}(\Sigma\Sigma^\top )$ remains a diagonal matrix with all nonnegative diagonal elements. Consequently,  $\operatorname{Adj}(BB^\top)=U\operatorname{Adj}(\Sigma\Sigma^\top )U^\top$ is a positive semidefinite matrix.
	
	Since 
	$$B^\top\operatorname{Adj}(BB^\top)B
	=V\Sigma^\top U^\top \left(U\operatorname{Adj}(\Sigma\Sigma^\top )U^\top\right)U\Sigma V^\top=V\left(\Sigma^\top \operatorname{Adj}(\Sigma\Sigma^\top )\Sigma\right) V^\top,$$
	we know that 
	$$
	B^\top\operatorname{Adj}(BB^\top)B=0  \iff  \Sigma^\top \operatorname{Adj}(\Sigma\Sigma^\top )\Sigma=0
	\iff \operatorname{Adj}(\Sigma\Sigma^\top)=\operatorname{diag}(\underbrace{0,\dots,0}_{\operatorname{rank}(B)},\cdots), 
	$$
	which is equivalent to $\operatorname{rank}(B)<\ell$. This completes the proof of this lemma. 
\end{proof}

Now, we are ready to prove Lemma \ref{keylemma-exp1}.
\begin{proof}[Proof of Lemma \ref{keylemma-exp1}] 
	First, we have
	\begin{equation*}
		\begin{aligned}
			\mathbb{E} [A_{\mathcal{S} }^\dagger A_{\mathcal{S} }]
			= &\sum_{\mathcal{S}  \in \binom{[m]}{s}}\frac{\operatorname{det}(A_{\mathcal{S} }A_{\mathcal{S} }^\top)}{\sum_{\mathcal{J} \in \binom{[m]}{s}}\operatorname{det}( A_{\mathcal{J} }A_{\mathcal{J} }^\top)}A_{\mathcal{S} }^{\dagger}A_{\mathcal{S} }
			= \frac{\sum_{\operatorname{det}( A_{\mathcal{S} }A_{\mathcal{S} }^\top) \neq 0}\operatorname{det}( A_{\mathcal{S} }A_{\mathcal{S} }^\top) }{e_{s}(\lambda)}A_{\mathcal{S} }^{\dagger} A_{\mathcal{S} },
		\end{aligned}
	\end{equation*}
	where the last equality follows from Lemma \ref{lemma-spm}.
	Noting that \( \operatorname{det}(A_{\mathcal{S} } A_{\mathcal{S} }^\top) \neq 0 \) implies \( \text{rank}(A_{\mathcal{S} }) = s \), so we have $
	A_{\mathcal{S} }^{\dagger} = A_{\mathcal{S} }^\top (A_{\mathcal{S} } A_{\mathcal{S} }^\top)^{-1}.
	$
	Furthermore, it holds that
	$ \operatorname{det}(A_{\mathcal{S} } A_{\mathcal{S} }^\top)(A_{\mathcal{S} } A_{\mathcal{S} }^\top)^{-1} = \operatorname{Adj}(A_{\mathcal{S} } A_{\mathcal{S} }^\top).$
	Substituting these expressions into the above equation, we obtain
	\begin{equation}\label{proof-xie-11241}
		\begin{aligned}
			\mathbb{E} [A_{\mathcal{S} }^\dagger A_{\mathcal{S} }]&=\frac{\sum_{\operatorname{det}( A_{\mathcal{S} }A_{\mathcal{S} }^\top) \neq 0}A_{\mathcal{S} }^\top \operatorname{det}( A_{\mathcal{S} }A_{\mathcal{S} }^\top)  (A_{\mathcal{S} }A_{\mathcal{S} }^\top)^{-1}A_{\mathcal{S} }}{e_{s}(\lambda)}\\
			&= A^\top\frac{\sum_{\operatorname{det}( A_{\mathcal{S} }A_{\mathcal{S} }^\top) \neq 0}I_{\mathcal{S} }^\top \operatorname{Adj}(A_{\mathcal{S} }A_{\mathcal{S} }^\top) I_{\mathcal{S} }}{e_{s}(\lambda)}A.
		\end{aligned}
	\end{equation} 
	From Lemma \ref{lemma0}, we know that 
	$$A^\top\sum_{\operatorname{det}(A_{\mathcal{S} }A_{\mathcal{S} }^\top) = 0}I_{\mathcal{S} }^\top \operatorname{Adj}(A_{\mathcal{S} }A_{\mathcal{S} }^\top) I_{\mathcal{S} } A=\sum_{\operatorname{det}( A_{\mathcal{S} }A_{\mathcal{S} }^\top) = 0}A^\top_{\mathcal{S} } \operatorname{Adj}(A_{\mathcal{S} }A_{\mathcal{S} }^\top) A_{\mathcal{S} }=0,$$
	which together with \eqref{proof-xie-11241} implies
	$$
	\mathbb{E} [A_{\mathcal{S} }^\dagger A_{\mathcal{S} }]= A^\top\frac{\sum_{\mathcal{S}\in\binom{[m]}{s} }I_{\mathcal{S} }^\top \operatorname{Adj}(A_{\mathcal{S} }A_{\mathcal{S} }^\top) I_{\mathcal{S} }}{e_{s}(\lambda)}A
	=A^\top H_s A,
	$$
	where the last equality follows from Lemma \ref{lemmaH}. This completes the proof of this lemma.
\end{proof}

\begin{proof}[Proof of Lemma \ref{keylemma-exp2}]
	First, we have
	\begin{equation*}
		\begin{aligned}
			\mathbb{E} [I^\top_{\mathcal{S} } (A_{\mathcal{S} }^{\dagger})^\top A_{\mathcal{S} }^{\dagger}  I_{\mathcal{S} }]
			= &
			\frac{\sum_{\mathcal{S} \in \binom{[m]}{s}} I_{\mathcal{S} }^\top \operatorname{det}( A_{\mathcal{S} }A_{\mathcal{S} }^\top)(A_{\mathcal{S} }^{\dagger})^\top A_{\mathcal{S} }^{\dagger}I_{\mathcal{S} }}{\sum_{\mathcal{J} \in \binom{[m]}{s}}\operatorname{det}( A_{\mathcal{J} }A_{\mathcal{J} }^\top)} \\
			=
			&
			\frac{\sum_{\operatorname{det}( A_{\mathcal{S} }A_{\mathcal{S} }^\top) \neq 0} I_{\mathcal{S} }^\top \operatorname{det}( A_{\mathcal{S} }A_{\mathcal{S} }^\top)(A_{S}^{\dagger})^\top A_{S}^{\dagger}I_{\mathcal{S} }}{e_{s}(\lambda)}.\\
		\end{aligned}
	\end{equation*}
	Similar to the proof of Lemma \ref{keylemma-exp1}, we know that  \( \operatorname{det}(A_{\mathcal{S} } A_{\mathcal{S} }^\top) \neq 0 \) implies $
	A_{\mathcal{S} }^{\dagger} = A_{\mathcal{S} }^\top (A_{\mathcal{S} } A_{\mathcal{S} }^\top)^{-1}
	$ and \( \operatorname{det}(A_{\mathcal{S} } A_{\mathcal{S} }^\top)(A_{\mathcal{S} } A_{\mathcal{S} }^\top)^{-1} = \operatorname{Adj}(A_{\mathcal{S} } A_{\mathcal{S} }^\top) \). 
	Hence, 
	\begin{equation*}%\label{proof-xie-11241}
		\begin{aligned}
			\mathbb{E} [I^\top_{\mathcal{S} } (A_{\mathcal{S} }^{\dagger})^\top A_{\mathcal{S} }^{\dagger}  I_{\mathcal{S} }]
			&=\frac{\sum_{\operatorname{det} (A_{\mathcal{S} }A_{\mathcal{S} }^\top) \neq 0} I_{\mathcal{S} }^\top \operatorname{det}( A_{\mathcal{S} }A_{\mathcal{S} }^\top)(A_{\mathcal{S} }^\top (A_{\mathcal{S} }A_{\mathcal{S} }^\top)^{-1})^\top A_{\mathcal{S} }^\top (A_{\mathcal{S} }A_{\mathcal{S} }^\top)^{-1}I_{\mathcal{S} }}{ e_{s}(\lambda)} \\
			&=\frac{\sum_{\operatorname{det}(A_{\mathcal{S} }A_{\mathcal{S} }^\top) \neq 0} I^\top_{\mathcal{S} } \operatorname{Adj}(A_{\mathcal{S} }A_{\mathcal{S} }^\top)I_{\mathcal{S} }}{e_{s}(\lambda) }.	
		\end{aligned}
	\end{equation*} 
	It follows from Lemma \ref{lemma0} that 
	$\sum_{\operatorname{det}(A_{\mathcal{S} }A_{\mathcal{S} }^\top) = 0} I_{\mathcal{S} }^\top \operatorname{Adj}(A_{\mathcal{S} }A_{\mathcal{S} }^\top)I_{\mathcal{S} } $ is positive semidefinite. Therefore,
	$$
	\mathbb{E} [I^\top_{\mathcal{S} } (A_{\mathcal{S} }^{\dagger})^\top A_{\mathcal{S} }^{\dagger}  I_{\mathcal{S} }] \preceq \frac{\sum_{ \mathcal{S} \in \binom{[m]}{s}} I^\top_{\mathcal{S} } \operatorname{Adj}(A_{\mathcal{S} }A_{\mathcal{S} }^\top)I_{\mathcal{S} }}{e_{s}(\lambda) }=H_s,
	$$
	where the last equality follows from Lemma \ref{lemmaH}. This completes the proof of this lemma.
\end{proof}

\subsection{Proof of Lemma \ref{lemma-lowerbound}}

\begin{proof}[Proof of Lemma \ref{lemma-lowerbound}]
	Let $\Delta_{s}:=\sum_{j=s}^{\operatorname{rank}(A)}\sigma^2_j(A)$ and consider the singular value decomposition  $A=U\Sigma V^\top$. It follows from Lemma \ref{lemma-xie-1125} that for any $1\leq s\leq \operatorname{rank}(A)$,
	$$H_s\succeq U\operatorname{diag}\left(\frac{1}{\sigma_1^2(A)+\Delta_{s+1}},\dots,\frac{1}{\sigma_s^2(A)+\Delta_{s+1}},\frac{1}{\Delta_{s}},\dots,\frac{1}{\Delta_{s}}\right)U^\top.$$
	Hence,
	\begin{equation}
		\begin{aligned}
			\label{eqaa1}
			A^\top H_s A&\succeq V\Sigma^\top U^\top U \operatorname{diag}\left(\frac{1}{\sigma^2_1(A)+\Delta_{s+1}},\dots,\frac{1}{\Delta_{s}},\frac{1}{\Delta_{s}},\dots,\frac{1}{\Delta_{s}}\right)U^\top U\Sigma V^\top\\
			&=V\operatorname{diag}\left(\frac{\sigma^2_1(A)}{\sigma^2_1(A)+\Delta_{s+1}},\dots,\frac{\sigma^2_s(A)}{\Delta_{s}},\dots,\frac{\sigma^2_{\min}(A)}{\Delta_{s}},0,\dots,0\right) V^\top.       
		\end{aligned}
	\end{equation}
	Since $z\in \operatorname{Range}(A^\top)$, we know that there exists a vector $ y\in\mathbb{R}^m$ such that $z=V\Sigma^\top U^\top y$. From \eqref{eqaa1}, we have
	\begin{equation}
		\begin{aligned}
			&z^\top  A^\top H_s Az \\
			&\geq (\Sigma^\top U^\top y)^\top \operatorname{diag}\left(\frac{\sigma^2_1(A)}{\sigma^2_1(A)+\Delta_{s+1}},\dots,\frac{\sigma^2_s(A)}{\Delta_{s}},\dots,\frac{\sigma^2_{\min}(A)}{\Delta_{s}},0,\dots,0\right) \Sigma^\top U^\top y   \\
			&\geq \frac{\sigma^2_{\min}(A)}{\Delta_{s}}(\Sigma^\top U^\top y)^\top(\Sigma^\top U^\top y)\\
			&=\frac{\sigma^2_{\min}(A)}{\Delta_{s}}\lVert z \rVert^2_2,
		\end{aligned}
	\end{equation}
	where the second inequality follows from $\Sigma=\operatorname{diag}(\sigma_1(A),\dots,\sigma_{\operatorname{min}}(A),0,\dots,0)\in\mathbb{R}^{m\times n}.$
\end{proof}

\end{document}